\newcommand{\E}{\mathbf{E}}
\newcommand{\D}{\mathbf{D}}
\newcommand{\B}{\mathcal B}
\newcommand{\w}{\omega}
\newcommand{\cl}{\operatorname{cl}}
\newcommand{\Int}{\operatorname{int}}
\newcommand{\N}{\mathbb N}
\newcommand{\F}{\mathcal F}
\newcommand{\R}{\mathbb{R}}
\newcommand{\PR}{\operatorname{PR}}
\newcommand*{\defeq}{\stackrel{\mathsmaller{\mathsf{def}}}{=}}
\DeclareSymbolFont{extraup}{U}{zavm}{m}{n}
\DeclareMathSymbol{\varheart}{\mathalpha}{extraup}{86}
\DeclareMathSymbol{\vardiamond}{\mathalpha}{extraup}{87}
\newtheorem{theorem}{Theorem}[section]
\newtheorem{proposition}[theorem]{Proposition}
\newtheorem{corollary}[theorem]{Corollary}
\newtheorem{question}[theorem]{Question}
\newtheorem{lemma}[theorem]{Lemma}
\newtheorem{claim}[theorem]{Claim}
\newtheorem{problem}[theorem]{Problem}
\theoremstyle{definition}
\title[Countably compact extensions and cardinal characteristics of the continuum]{Countably compact extensions and cardinal characteristics of the continuum}
\author[S. Bardyla]{Serhii Bardyla}
\author[P. Nyikos]{Peter Nyikos}
\thanks{\Cross \hspace{-0,5mm}  Sadly, Peter Nyikos passed away in February 2024, when this work was in the final stage of the preparation.
Therefore the rest of the authors decided to finish the research and to submit the paper with his name as coauthor. 
This is our tribute to our dear friend Peter}
\author[L.~Zdomskyy]{Lyubomyr Zdomskyy}
\address{Faculty of Mathematics, University of Vienna, Austria.}
\email{sbardyla@gmail.com}
\urladdr{http://www.logic.univie.ac.at/$\tilde{\ }$bardylas55/}
\address{Department of Mathematics, University of South Carolina, USA.}
\email{NYIKOS@math.sc.edu}
\urladdr{https://people.math.sc.edu/nyikos/}
\address{Institute of Discrete Mathematics and Geometry,
TU Wien, Austria.}
\email{lzdomsky@gmail.com}
\urladdr{https://dmg.tuwien.ac.at/zdomskyy/}
\thanks{The research of the first named author was funded in whole by the Austrian Science Fund FWF [10.55776/ESP399]. The research of the third named author was funded in whole by the Austrian Science Fund FWF [10.55776/I5930]}
\subjclass[2020]{54A35, 54D35, 03E17}
\keywords{Nyikos space, countably compact, pseudocompact, embedding, continuum}
\begin{document}
\begin{abstract}
In this paper, we show that the existence of certain  first-countable compact-like extensions is equivalent to the equality between corresponding cardinal characteristics of the continuum. For instance, $\mathfrak b=\mathfrak s=\mathfrak c$ if and only if every regular first-countable space of weight $< \mathfrak c$ can be densely embedded into a regular first-countable countably compact space. 
\end{abstract}
\maketitle

\section{Introduction}
The main motivation for investigation of regular first-countable countably compact spaces is the following problem of Nyikos which is listed among 20 central problems in Set-theoretic Topology by Hru\v{s}ak and Moore~\cite{HM}.      


\begin{problem}[Nyikos]\label{pN}
Does ZFC imply the existence of a regular separable first-countable countably compact non-compact space?
\end{problem}

Following~\cite{BZ}, a regular separable first-countable countably compact space is called a {\em Nyikos} space.
Consistent examples of noncompact Nyikos spaces were constructed by Franklin and Rajagopalan~\cite{FR} (under $\w_1=\mathfrak t$) and by Ostaszewski~\cite{Ost} (under $\mathfrak b=\mathfrak c$).
Weiss~\cite{Weiss} showed that under MA every perfectly normal Nyikos space is compact. Proper Forcing Axiom or, simply, PFA is a stronger version of Martin's Axiom. Basic information about PFA can be found in~\cite{Moore}. For fruitful applications of PFA in Topology see~\cite{PFA1,PFA2,PFA3,PFA, Viale}.  The following result was proved by Nyikos and Zdomskyy~\cite{NZ}.

\begin{theorem}\label{NZ}
{\em (PFA)} Every normal Nyikos space is compact.    
\end{theorem}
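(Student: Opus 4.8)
The plan is to argue by contradiction. Suppose $X$ is a normal Nyikos space that is \emph{not} compact, fix a countable dense set $D\subseteq X$, and identify $D$ with $\w$. Since a countably compact space is compact as soon as it is Lindel\"of, $X$ fails to be Lindel\"of, so it carries an open cover with no countable subcover. Feeding basic (first-countable) neighborhoods into the usual recursion produces a free, left-separated sequence $\langle x_\alpha:\alpha<\w_1\rangle$ together with basic open sets $U_\alpha\ni x_\alpha$ such that $x_\alpha\notin\bigcup_{\beta<\alpha}U_\beta$. First-countability and density then let me attach to each $x_\alpha$ a sequence from $D$ converging to $x_\alpha$, so the whole configuration is coded by $\w_1$-many infinite subsets of $\w$ (the ranges of these sequences, or the neighborhood traces $U\cap D$). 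This turns the topological obstruction into a combinatorial object living in $(P(\w)/\mathrm{fin},\subseteq^*)$, while countable compactness is recorded as the constraint that \emph{every} infinite subset of $D$ must accumulate at a point of $X$.

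Next I would use normality to discipline that object. Whenever two subfamilies $\{x_\alpha:\alpha\in A\}$ and $\{x_\alpha:\alpha\in B\}$ have disjoint closures, normality separates them by disjoint open sets, and restricting to $D$ yields an honest separation of the corresponding subsets of $\w$ modulo finite. The goal of this stage is to show that the skeleton assembles into a \emph{tower} of length $\w_1$ (a $\subseteq^*$-decreasing $\w_1$-sequence of infinite subsets of $\w$), or, when it cannot be lined up so cleanly, into a ``separated'' gap-type structure in which the two halves are genuinely pulled apart by a single partition of $\w$ rather than merely being a Luzin-style obstruction.

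The contradiction then comes from PFA. Under PFA one has $\mathfrak t=\mathfrak p=\mathfrak b=\mathfrak s=\mathfrak c=\w_2$; in particular $\mathfrak t>\w_1$, so every $\subseteq^*$-decreasing $\w_1$-sequence of infinite subsets of $\w$ has a pseudo-intersection. But countable compactness forces the tower read off from the skeleton to be \emph{maximal}, i.e.\ to admit no pseudo-intersection — a pseudo-intersection would be an infinite subset of $D$ with no accumulation point in $X$ — and this is the contradiction. For a general Nyikos space the skeleton need not collapse to a single tower, and there I would invoke a finer consequence of PFA, namely the $\mathbb P$-ideal dichotomy (or the OCA analysis of $(\w_1,\w_1)$-gaps) applied to the ideal generated by the neighborhood traces: one alternative produces an uncountable subfamily that is discrete in itself, which is impossible in a countably compact space, while the other produces a covering of $D$ by countably many sets that, through the separation supplied by normality, yields the convergent sequence needed to reduce the cover to a countable one. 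Either way $X$ must have been compact.

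I expect the genuine difficulty to be twofold. First, the dictionary between the topology of $X$ and the combinatorics on $\w$ must be set up so faithfully that ``an interpolant (or pseudo-intersection) converges to a point separated from the skeleton'' is truly contradictory; this is delicate precisely because a general Nyikos space need not be a Franklin--Rajagopalan space, so one cannot read the tower off a definition and must instead extract it from the free sequence while tracking convergence and the separations given by normality. Second, if a bespoke proper forcing is used in place of a packaged consequence such as the $\mathbb P$-ideal dichotomy or OCA, the core labor is proving properness: here I would expect normality to furnish the side conditions — disjoint separations of finite approximations — that drive a fusion/reflection argument, and checking that such a forcing meets the $\w_1$ dense sets demanded by PFA is the step most likely to require substantial work.
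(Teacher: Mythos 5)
Your sketch cannot be checked against an internal argument, because the paper does not prove Theorem~\ref{NZ} at all: it is quoted from the cited work of Nyikos and Zdomskyy \cite{NZ}, where it is a substantial theorem in its own right. Judged on its own merits, your proposal has a genuine gap at its central step. Everything hinges on extracting, from an \emph{arbitrary} normal Nyikos space, a combinatorial object (an $\w_1$-tower, or a separated gap) whose existence PFA refutes, and this extraction is never carried out: the left-separated sequence obtained from non-Lindel\"ofness gives $\w_1$ many traces $U_\alpha\cap D$ on the dense set, but nothing you say shows these traces can be linearized into a $\subseteq^*$-decreasing chain, and normality only separates \emph{disjoint closed} sets, which is far from producing such an alignment. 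Worse, the intended contradiction is inverted. You claim countable compactness makes the extracted tower maximal because ``a pseudo-intersection would be an infinite subset of $D$ with no accumulation point in $X$''; but countable compactness guarantees that \emph{every} infinite subset of $D$ has an accumulation point in $X$, so a pseudo-intersection supplied by $\mathfrak t>\w_1$ would simply accumulate at some point, and to reach a contradiction you must show no point of $X$ can serve as its accumulation point. That inference is valid for Franklin--Rajagopalan spaces, whose points are by construction indexed by the tower, but --- as you yourself concede --- a normal Nyikos space need not be of that form, and this is exactly where the whole difficulty of the theorem lives. Flagging the dictionary as ``delicate'' does not fill the gap.

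There is also an outright false assertion in your fallback. In the P-ideal dichotomy branch you dismiss the first alternative because ``an uncountable subfamily that is discrete in itself \dots\ is impossible in a countably compact space.'' That is wrong: countably compact spaces routinely contain uncountable discrete (non-closed) subspaces --- the successor ordinals in $\w_1$, or the unit vectors in a $\Sigma$-product of intervals --- and only \emph{closed} discrete subspaces must be finite. Since nothing in your setup makes the set produced by the dichotomy closed, that branch yields no contradiction. Note also that under PFA one has $2^{\w}=2^{\w_1}$, so Jones-lemma-style counting is unavailable and normality must be exploited in an essentially finer way than ``separate and restrict to $D$''. As it stands, the proposal is a research plan whose two pillars (the faithful tower extraction and the dichotomy analysis) are respectively unproved and broken, not a proof of the theorem.
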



A space $X$ is called {\em $\mathbb R$-rigid} if each continuous real-valued function on $X$ is constant. For more about rigid spaces see \cite{BO,5,6,7,Tz}.
Another problem related to Nyikos spaces appeared in~\cite[Problem C65]{Prob}.

\begin{problem}[Tzannes]\label{Tzannes}
Does there exist a regular (separable, first countable) countably compact $\mathbb R$-rigid space?    
\end{problem}

Taking into account the properties in brackets, Tzannes problem can be considered as an ultimate version of Nyikos problem, as $\mathbb R$-rigid spaces, being not Tychonoff, are not locally compact.

Bardyla and Osipov~\cite{BO} constructed a ZFC example of a regular  countably compact $\mathbb R$-rigid space. Bardyla and Zdomskyy~\cite{BZ} obtained the following answer to Problem~\ref{Tzannes} under the consistent assumption ($\varheart$): ``$\w_1=\mathfrak t<\mathfrak b=\mathfrak c$ and there exists a $P_{\mathfrak c}$-point in $\beta(\w)$''.

\begin{theorem}[Bardyla, Zdomskyy]\label{BZZ}
$(\varheart)$ There exists an $\mathbb R$-rigid Nyikos space.    
\end{theorem}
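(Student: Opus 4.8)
The plan is to build the desired space $X$ as a crowded, separable, first-countable, regular, countably compact space on a countable dense set $D$ (a copy of $\w$ carrying a crowded, $\IQ$-like topology) together with $\mathfrak c$ additional limit points added by a transfinite recursion of length $\mathfrak c$, and to reduce $\R$-rigidity to a single combinatorial property of the closures of subsets of $D$.

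First I would record the reduction that drives everything. Note that any $\R$-rigid $T_1$ space is crowded, since the indicator of an isolated point is a nonconstant continuous map to $\R$; hence $D$ must be taken crowded rather than discrete. Then, if $f\colon X\to\R$ is continuous and nonconstant, choosing reals $a<b$ strictly between two values of $f$ and outside the countable set $f[D]$, the traces $A=\{d\in D: f(d)<a\}$ and $B=\{d\in D: f(d)>b\}$ are disjoint infinite subsets of $D$ (infinite because $X$ is crowded, so every nonempty open set meets the dense set $D$ in an infinite set) whose closures lie in the disjoint sets $f^{-1}((-\infty,a])$ and $f^{-1}([b,\infty))$. Consequently, to guarantee $\R$-rigidity it suffices to arrange the following closure-intersection property: every two disjoint infinite subsets of $D$ have a common accumulation point in $X$, i.e. $\cl(A)\cap\cl(B)\neq\emptyset$. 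I would isolate this as a lemma and make it the target of the construction.

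The core is then a recursion of length $\mathfrak c$ that simultaneously secures countable compactness and the closure-intersection property. Since there are exactly $\mathfrak c$ pairs of disjoint infinite subsets of $D$, fix an enumeration $\langle (A_\alpha,B_\alpha):\alpha<\mathfrak c\rangle$ of them, interleaved by bookkeeping with the countable subsets of the space as it is built (for the closing-off below). At stage $\alpha$ I would add a new point $z_\alpha$ declared to be the limit of one sequence obtained by interleaving an infinite $A'_\alpha\subseteq A_\alpha$ with an infinite $B'_\alpha\subseteq B_\alpha$, chosen so that $z_\alpha$ carries a countable decreasing neighborhood base, its neighborhoods separate $z_\alpha$ from the already constructed closed sets (regularity), and the convergence is coherent with the simple $P_{\mathfrak c}$-point $p$. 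Here the three parts of $(\varheart)$ enter: $\mathfrak b=\mathfrak c$ keeps the recursion unblocked, because at stage $\alpha$ the set of previous commitments has size $<\mathfrak c=\mathfrak b$ and a family of that size is bounded, so the required refinement $A'_\alpha,B'_\alpha$ and its neighborhood filter can always be found; the simple $P_{\mathfrak c}$-point $p$, generated by a $\subseteq^*$-decreasing tower $\langle P_\xi:\xi<\mathfrak c\rangle$, supplies the global convergence structure forcing each trace filter to be a $P$-filter, which is exactly what yields a countable neighborhood base and lets the construction close off under limits; and the tower of length $\w_1=\mathfrak t$ furnishes the Franklin--Rajagopalan-type non-compact countably compact skeleton along which $D$ first accumulates. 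Countable compactness of $X$ then follows: an infinite $E\subseteq X$ meeting $D$ infinitely accumulates by splitting $E\cap D$ and invoking the closure-intersection property, while an $E$ almost contained in the added points is handled by the interleaved closing-off, using that $p$ is a $P$-point to avoid destroying first-countability.

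The main obstacle I anticipate is the simultaneous preservation of regularity and first-countability at each newly added common limit point. The closure-intersection property pulls toward merging many sets at single points, whereas regularity demands that each $z_\alpha$ be separated by closed neighborhoods from the ever-growing earlier structure and that distinct limit points stay apart, all while every neighborhood base remains countable; reconciling these is precisely what forces the use of a simple $P_{\mathfrak c}$-point together with $\mathfrak b=\mathfrak c$ rather than a bare MAD family or tower. Verifying that the neighborhoods chosen from $\langle P_\xi\rangle$ stay clopen-like and almost disjoint from all earlier commitments, so that the final space is Hausdorff, regular, and first-countable and admits no surviving nonconstant continuous function, is where the delicate bookkeeping lies; making the closing-off for countable compactness of the added points coexist with this bookkeeping is the second delicate point. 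I would finish by checking separability (immediate from density of $D$) and non-compactness (inherited from the $\w_1=\mathfrak t$ skeleton), completing the verification that $X$ is an $\R$-rigid Nyikos space.
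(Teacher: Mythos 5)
There is a fatal gap: the ``closure-intersection property'' that you make the target of your recursion --- \emph{every} two disjoint infinite subsets of $D$ have intersecting closures in $X$ --- is not merely hard to arrange, it is inconsistent with regularity, and a Nyikos space must be regular. Indeed, suppose $X$ is regular and crowded, $D\subseteq X$ is countable and dense, and $x\neq y$ are any two points of $X$. By Hausdorffness and regularity there are open sets $U\ni x$ and $V\ni y$ with $\cl(U)\cap\cl(V)=\emptyset$. Then $A=D\cap U$ and $B=D\cap V$ are disjoint subsets of $D$, both infinite by exactly the crowdedness-plus-density argument you yourself give, and $\cl(A)\subseteq\cl(U)$, $\cl(B)\subseteq\cl(V)$ are disjoint. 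So the property fails in every space of the kind you are trying to build as soon as it has two points: your reduction lemma is vacuously true but unusable, no bookkeeping can rescue the recursion, and the countable-compactness argument, which again splits $E\cap D$ and invokes the same property, collapses with it. The underlying conceptual error is that $\R$-rigidity of a regular space cannot be certified by a closure condition: the sets $A,B$ extracted from a nonconstant continuous $f$ automatically have disjoint closures (they lie in the disjoint closed sets $f^{-1}((-\infty,a])$ and $f^{-1}([b,\infty))$), so what must be prevented is the existence of $f$ itself; one needs \emph{functional} inseparability, which is compatible with regularity only because regular spaces need not be Tychonoff, and which is produced by quite different machinery.

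This is also why the paper proceeds along a different route. Theorem~\ref{BZZ} is proved in two steps: first, one constructs a regular separable first-countable $\R$-rigid space of cardinality $<\mathfrak c$ --- this is where $\w_1=\mathfrak t<\mathfrak c$ enters, via a non-normal separable first-countable space of size $\w_1$, Jones' machine (producing one pair of points that no continuous real-valued function separates) and the modified van Douwen extension (spreading this to all pairs), cf.\ the proof of Theorem~\ref{rigid}; second, this small rigid space is embedded \emph{densely} into a regular first-countable countably compact space using the embedding theorem (Theorem~\ref{BZ}, under $\heartsuit$; this is where $\mathfrak b=\mathfrak c$ and the simple $P_{\mathfrak c}$-point do their work, in a recursion of length $\mathfrak c$ whose only task is to give each countable closed discrete set an accumulation point --- it never has to merge closures of disjoint sets). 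Since a continuous real-valued function constant on a dense subspace is constant everywhere, and separability passes from a dense subspace to the whole space, the extension is automatically an $\R$-rigid Nyikos space. If you want a one-step construction you must replace your closure-intersection target by functional inseparability; as it stands, the proposal cannot work.
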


The proof of Theorem~\ref{BZZ} had two steps. During the first one we constructed a regular separable first-countable $\mathbb R$-rigid space $X$ of cardinality $<\mathfrak c$. The second step was to embed densely the space $X$ into a first-countable countably compact space $Y$. Note that $Y$ will be automatically an $\mathbb R$-rigid Nyikos space.  This was done using the following result proved in~\cite
{BZ} under the assumption ($\heartsuit$): ``$\mathfrak b=\mathfrak c$ and there exists a $P_{\mathfrak c}$-point in $\beta(\w)$''. 

\begin{theorem}[Bardyla, Zdomskyy]\label{BZ}
$(\heartsuit)$ Each regular first-countable space of cardinality $< \mathfrak c$ embeds densely into a regular first-countable countably compact space. 
\end{theorem}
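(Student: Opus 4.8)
The plan is to build $Y$ as the union of a continuous $\subseteq$-increasing chain $\langle Y_\alpha:\alpha\le\mathfrak c\rangle$ of regular first-countable spaces, with $Y_0=X$ and each $Y_\alpha$ a dense subspace of the later ones, where at successor steps I adjoin exactly one new point to serve as the limit of a prescribed countable closed discrete set. Since $\mathfrak b$ is always regular, the hypothesis $\mathfrak b=\mathfrak c$ makes $\mathfrak c$ regular; hence $|Y_\alpha|\le|X|+|\alpha|<\mathfrak c$ at every stage, $|Y|\le\mathfrak c$, and—because $\mathrm{cf}(\mathfrak c)>\w$—every countable $D\subseteq Y$ already lies in some $Y_\beta$ with $\beta<\mathfrak c$. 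As each $Y_\beta$ has at most $\mathfrak c$ countable subsets, a bookkeeping function of length $\mathfrak c$ can list, at a stage where it is still present, every countable set that ever appears; I use it so that each infinite closed discrete set is assigned a stage at which it is killed by adjoining an accumulation point. Granting that the construction keeps every $Y_\alpha$ regular and first-countable and that adding later points never destroys an accumulation already created, the limit space $Y$ will be regular, first-countable and countably compact, with $X$ dense.

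The engine for adjoining limits is the simple $P_{\mathfrak c}$-point $\U$. Fix a $\subseteq^*$-decreasing tower $\langle U_\xi:\xi<\mathfrak c\rangle$ generating $\U$. To converge a countable closed discrete set $S=\{s_n:n\in\w\}\subseteq Y_\alpha$, I would first replace it by the subsequence indexed by a single $A\in\U$ chosen below the fewer than $\mathfrak c$ many constraints inherited from earlier stages, and then introduce a point $p$ whose basic neighborhoods are $\{p\}\cup\bigcup_{n\in A,\,n\ge k}V_n$ (with $k\in\w$), where the $V_n$ are shrinking regular-open neighborhoods of the $s_n$ in $Y_\alpha$. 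Because $S$ is discrete we may thin it so that these sets have nested closures, which makes $Y_{\alpha+1}$ regular and gives $p$ a countable decreasing base; and since each $V_n$ meets $X$ by induction, the new open sets meet $X$, preserving density. The role of $\U$ is coherence: all chosen index sets are drawn from the ultrafilter, so distinct new points are separated by splitting a set and its complement across $\U$, and the ``limit of a limit'' computations forced when a later sequence runs through previously added points reduce to $\U$-limits, which compose. The $P_{\mathfrak c}$-property is exactly what guarantees that the fewer than $\mathfrak c$ constraints active at any one of the $\mathfrak c$ stages admit a pseudo-intersection in $\U$, so a suitable convergent subsequence can always be found.

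The second use of $\mathfrak b=\mathfrak c$ is to force countable compactness rather than mere local countable compactness. After the recursion, suppose $D\subseteq Y$ is infinite; by regularity of $\mathfrak c$ it lies in some $Y_\beta$, and the bookkeeping considered $D$ at some later stage $\gamma$. If $D$ was still closed discrete there, it acquired an accumulation point in $Y_{\gamma+1}\subseteq Y$; otherwise it already had one in $Y_\gamma$, which persists because the subspace topology on $Y_\gamma$ never changes. The delicate point is that the index sets $A$ must be chosen—using a dominating family of size $\mathfrak b=\mathfrak c$—so that the traces genuinely capture cofinally many members of every future $D$; this is where one diagonalizes the moduli recording at which tower level each point was born, and this domination can fail when $\mathfrak b<\mathfrak c$.

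I expect the main obstacle to be the simultaneous maintenance of regularity across the whole transfinite recursion together with the guarantee that the recursion ``catches up'' with the countable sets it itself creates. Adjoining one limit point in isolation is routine, but every new point spawns new convergent and new closed discrete sequences, and one must verify at limit stages that no countable set escapes uncaught and that the neighborhood filters assembled from the tower remain countably generated at iterated limit points. Balancing these demands—using the tower levels slowly enough to serve every future sequence while the $P_{\mathfrak c}$-point property still applies at each of the $\mathfrak c$ stages—is the crux, and it is precisely here that the conjunction of $\mathfrak b=\mathfrak c$ with a simple $P_{\mathfrak c}$-point is needed.
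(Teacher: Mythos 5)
Your overall architecture is the same as the paper's: a length-$\mathfrak c$ recursion with bookkeeping, adjoining one point per stage to kill a prescribed closed discrete set, with the simple $P_{\mathfrak c}$-point providing pseudo-intersections of the ${<}\,\mathfrak c$ constraints and $\mathfrak b=\mathfrak c$ providing domination (the paper proves the stronger Proposition~\ref{mainr} with $\mathfrak s=\mathfrak c$ in place of the $P$-point, and uses the $P$-point exactly as you do in the proof of Theorem~\ref{Tychonoff}). However, there is a genuine gap at precisely the step you call the crux: at a successor stage you define only the neighborhoods of the new point $p$ and leave the basic neighborhoods of all pre-existing points unchanged as sets (indeed you assert ``the subspace topology on $Y_\gamma$ never changes'', which in your version holds in the strong sense that old basic sets never gain new points). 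With that choice regularity provably fails after limit stages. Concretely, suppose $y$ is a point with nested base $\{U^y_n:n\in\omega\}$ and the bookkeeping processes, for each $j$, a set $D_j$ which is closed discrete at its stage and has infinitely many points inside $U^y_{m_j}$, where $m_j\to\infty$; let $w_j$ be the adjoined accumulation point. In your space no adjoined point ever enters any $U^y_n$, and no point whose basic neighborhoods lie in earlier stages can be in the closure of a set of adjoined points, so $C=\cl\bigl(\{w_j:j\in\omega\}\bigr)$ is a closed set disjoint from every $U^y_n$; hence $y\notin C$. Yet every open $V\supseteq C$ contains a neighborhood of each $w_j$, which meets $U^y_{m_j}$ (since $w_j\in\cl(U^y_{m_j})$), so $V$ meets every $U^y_n$, and $y$ cannot be separated from $C$. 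Such configurations are unavoidable — they arise already for second-generation adjoined points even when $X$ is countable discrete. The paper's proof avoids this exactly by the mechanism your sketch omits: after stabilizing the trace of the chosen subsequence $d_\gamma$ on each old point's base (the unique level $m(x)$ with $d_\gamma\subseteq^* U^x_{m(x)}\setminus U^x_{m(x)+1}$, which is what the $P$-point, or non-splitting, buys), it \emph{rewrites the old bases}, inserting the new point $\gamma$ into $U^x_n$ for all $n\le m(x)$, and maintains the inductive condition $\cl(U^x_{n+1})\subseteq U^x_n$, which together with Claim~\ref{clr} is what carries regularity through limit stages.

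A secondary error: you misplace the role of $\mathfrak b=\mathfrak c$. It is not needed ``to force countable compactness'' or to capture future sequences — that is pure bookkeeping plus $\operatorname{cf}(\mathfrak c)>\omega$. It is needed at each successor step to make the chosen family $\{V_n:n\in A\}$ locally finite: for each of the ${<}\,\mathfrak c$ basic sets one defines $f_x(n)=\min\{k: U^{s_n}_{k}\subseteq U^x_{m(x)}\setminus \cl(U^x_{m(x)+2})\}$ and uses $\mathfrak b=\mathfrak c$ to find a single $f$ with $f\geq^* f_x$ for all $x$, setting $V_n=U^{s_n}_{f(n)}$. Your substitute — thinning $S$ so the $V_n$ have ``nested closures'' — does not achieve local finiteness, and without it even Hausdorffness of $Y_{\alpha+1}$ fails: some old point $x$ would have every $U^x_m$ meeting infinitely many $V_n$, so every neighborhood of $p$ would meet every neighborhood of $x$. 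So both cardinal hypotheses do their work inside the successor step (trace stabilization and local finiteness), not where your sketch places them; as written, the proposal does not yield a regular (or even Hausdorff) extension.
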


Similar problems concerning embedding of topological spaces into first-countable compact-like spaces are known in General Topology. For instance, the most relevant to this paper is the following one posed in~\cite{Ste}.

\begin{problem}[Stephenson]\label{Ste}
Does every locally feebly compact first-countable regular space embed densely into a feebly compact first-countable regular space?   
\end{problem}

Problem~\ref{Ste} was solved affirmatively by Simon and Tironi~\cite{ST}.
In case of Tychonoff spaces the following result was obtained in~\cite{TT}.

\begin{theorem}[Terada, Terasawa]\label{tt}
Each Tychonoff first-countable locally pseudocompact space embeds densely into a Tychonoff first-countable pseudocompact space.
\end{theorem}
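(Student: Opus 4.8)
The plan is to build the desired extension as $Y = X \sqcup D$, where $D$ is a set of \emph{ideal} points adjoined to serve as cluster points for the families of open sets that witness the failure of pseudocompactness in $X$. Throughout I use that for Tychonoff spaces pseudocompactness coincides with feeble compactness, so ``locally pseudocompact'' means that every point has a neighbourhood with pseudocompact (equivalently feebly compact) closure. The obstructions to feeble compactness are exactly the decreasing sequences $(U_n)_{n\in\w}$ of nonempty open subsets of $X$ with $\bigcap_n \cl U_n = \emptyset$; call these \emph{escaping} sequences. First I would observe that, by local pseudocompactness, no tail of an escaping sequence is contained in a set with pseudocompact closure: if some $U_N$ sat inside such a neighbourhood $V$, then $(U_n)_{n\ge N}$ would be a decreasing sequence of nonempty open sets of the pseudocompact subspace $\cl V$ and would therefore cluster at a point of $\cl V\subseteq X$, contradicting $\bigcap_n \cl U_n = \emptyset$. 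Thus every escaping sequence genuinely runs off to infinity, and it is precisely such sequences that must acquire a cluster point in $Y$.

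Next I would fix the neighbourhood structure. Declare $X$ to be open and dense in $Y$, so that points of $X$ keep their original (countable) neighbourhood bases; for each $p\in D$, associated to an escaping sequence $(U_n^p)_n$, let the basic neighbourhoods of $p$ be the sets $\{p\}\cup U_n^p$. First-countability at the ideal points is then immediate, and density of $X$ is automatic since each such neighbourhood meets $X$. To secure feeble compactness of $Y$ I would choose $D$ by a \emph{saturation} (maximal almost-disjoint-family) argument, in the spirit of the Mr\'owka $\Psi$-space: enlarge the indexing family of escaping sequences until it becomes cofinal, so that every escaping sequence of $X$ has a tail refining some $U_n^p$ and hence clusters at the corresponding $p$. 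One then checks that no \emph{new} escaping sequence is created in $Y$: since $X$ is dense and open, any decreasing sequence of nonempty open sets of $Y$ restricts to one in $X$, whose cluster point (possibly an ideal point) is then a cluster point in $Y$.

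The main obstacle, and the place where the Tychonoff hypothesis must really be used, is ensuring that $Y$ is completely regular rather than merely regular. The naive recipe of extending each continuous $f\colon X\to[0,1]$ by the limit of $f$ along the filter generated by $(U_n^p)_n$ fails, because such a limit need not exist: a continuous function can oscillate along an escaping sequence. The remedy is to refine the escaping sequences using the functional structure of $X$. Exploiting complete regularity of $X$, I would arrange the defining sets $U_n^p$ to be cozero sets chosen inside level bands of countably many controlling functions, so that along each $\mathcal F_p$ every prescribed separating function is forced to converge; the separation of an ideal point $p$ from a closed set $C\not\ni p$ is first realised on the $X$-side by a continuous function and then extended continuously across $p$ using that $p$ has a neighbourhood on which the function is nearly constant. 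Carrying this out \emph{simultaneously} for all ideal points, while keeping the neighbourhood assignment $p\mapsto(U_n^p)_n$ coherent with the saturation step above, is the technical heart of the proof.

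Finally, I would note the relation to the regular case. One might hope to shortcut the argument by invoking the Simon--Tironi solution of Problem~\ref{Ste}: since Tychonoff spaces are regular and here pseudocompactness equals feeble compactness, that result already yields a feebly compact first-countable \emph{regular} dense extension of $X$. However, a feebly compact regular space need not be Tychonoff, so this does not by itself deliver the theorem, and one is forced back to a construction that respects the cozero structure from the outset. I therefore expect the complete-regularity step to be the principal difficulty, with first-countability, density, and feeble compactness following comparatively routinely once the neighbourhood bases and the saturation argument are in place.
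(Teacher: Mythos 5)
First, a point of order: the paper does not prove Theorem~\ref{tt} at all — it is quoted from Terada and Terasawa~\cite{TT} purely as motivation — so there is no in-paper proof to compare yours against, and your attempt must be judged on its own merits. Judged that way, it has a genuine gap, one you in fact name yourself: the entire complete-regularity step is announced (``arrange the defining sets to be cozero sets \dots{} is the technical heart of the proof'') but never carried out, and that step is the whole content of the theorem. Two concrete failures in the framework you set up. (a) With ideal-point neighbourhoods of the form $\{p\}\cup U^p_n$, even \emph{regularity} can fail: another ideal point $q$ may lie in $\cl_Y(U^p_{n+1})$ although the two sequences are eventually disjoint, because eventual disjointness only gives $U^q_k\cap U^p_k=\emptyset$ for large $k$, while $U^q_m$ may still meet $U^p_{n+1}\setminus U^p_k$ for every $m$; then $\cl_Y(\{p\}\cup U^p_{n+1})\not\subseteq\{p\}\cup U^p_n$. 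Bell's construction (Proposition~\ref{Bell} in this paper, from~\cite{Bell}) repairs exactly this by letting neighbourhoods of ideal points absorb all other ideal points eventually contained in them and by requiring the sequences to be exact; your bases omit both features. (b) The phrase ``enlarge the indexing family of escaping sequences until it becomes cofinal'' conflates a maximal pairwise eventually disjoint family (which exists by Zorn's lemma) with a \emph{cofinal} family (one refining every escaping sequence). Since for complete regularity you may only admit the ``good'', function-controlled sequences, what you actually need is that good sequences are cofinal among all escaping ones. That is precisely Bell's refinement hypothesis, and this very paper shows it is not free: Proposition~\ref{pseudocompact} secures it only for weight $<\min\{\mathfrak s,\mathfrak b\}$, and Proposition~\ref{bpseudo} together with Theorem~\ref{ps} shows that without local pseudocompactness dense first-countable pseudocompactifications can fail to exist at all. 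So local pseudocompactness must carry the weight at exactly this refinement/convergence step, whereas you invoke it only for the preliminary observation that escaping sequences run off to infinity.

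What is missing is the idea that makes the theorem a ZFC fact: do not attach single points; attach copies of the half-open interval $[0,1)$, as in Proposition~\ref{lce} of this paper (following the Ostaszewski--Nyikos--Terada--Terasawa technique). There one picks points $z_m$ in the members $U_m$ of the escaping (locally finite) family, uses local pseudocompactness and complete regularity of $X$ to get shrinkings with feebly compact closures and continuous $f_m$ with $f_m(z_m)=0$ and $f_m\equiv 1$ off $U_m$, and gives the attached point $(x,1)$ the neighbourhoods built from $\bigcup_{m\ge n}f_m^{-1}((x-1/n,x+1/n))$. The segment absorbs exactly the oscillation of continuous functions along the escaping sequence that blocks a one-point extension, complete regularity of the extension can be verified directly, and local pseudocompactness makes the closures of the new basic neighbourhoods feebly compact, so the extension can be iterated; since $X$ stays dense and first countability bounds every extension by $|X|^{\w}$, the iteration stabilizes at a feebly compact (hence, being Tychonoff, pseudocompact) space. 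Your closing remark about Simon--Tironi is correct — a regular feebly compact extension need not be Tychonoff — but the way past that obstacle is the interval attachment, not a saturation of single-point Mr\'owka-style extensions.
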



The mentioned results motivate the following general question which we address in this paper.

\begin{question}\label{mainq}
When does a space $X$ embed into a regular first-countable compact-like space?
\end{question}


\section{Preliminaries}\label{prelim}
By $\mathbb N$ we denote the set of positive integers, i.e. $\mathbb N=\w\setminus \{0\}$. The cardinality of the continuum is denoted by $\mathfrak c$. The set of all infinite subsets of a countable set $A$ is denoted by $[A]^{\w}$. The set of all finite subsets of a set $A$ is denoted by $[A]^{<\w}$. A family $\mathcal A\subset[\w]^\w$ is called {\em almost disjoint} if $|A\cap B|<\w$ for each $A,B\in\mathcal A$. A family $\mathcal S\subseteq [\w]^{\w}$ is called {\em splitting} if for any $A\in [\w]^\w$ there exists $S\in\mathcal S$ which splits $A$, i.e.  $|A\cap S|=|A\cap (\w\setminus S)|=\w$. For two subsets $A$ and $B$ of $\w$ we write $A\subseteq^* B$ or $B\supseteq^* A$ if $|A\setminus B|<\w$. A family $\mathcal T\subset [\w]^\w$ is called a {\em maximal tower} if it is well-ordered by the relation $\supseteq^*$ and there is no set $A\in[\w]^\w$ such that $A\subseteq^* T$ for all $T\in\mathcal T$. For any functions $f,g\in\w^\w$ we write $f\leq^*g$ if $f(n)\leq g(n)$ for all but finitely many $n\in\w$. 
A family $\Phi\subseteq \w^\w$ is called {\em unbounded} if there exists no $g\in\w^\w$ such that $f\leq^* g$ for all $f\in\Phi$. We write $f\equiv c$ if $f$ is a constant function with value $c$.

We shall use the following cardinal characteristics of the continuum:

\begin{itemize}
    \item $\mathfrak s =\min\{|\mathcal S|: \mathcal S\subset [\w]^{\w}$ is a splitting family$\}$;
    \item $\mathfrak t =\min\{|\mathcal T|: \mathcal T\subset [\w]^{\w}$ is a maximal tower$\}$;
    \item $\mathfrak b =\min\{|\Phi|: \Phi\subset \w^{\w}$ is an  unbounded family$\}$.
\end{itemize}

It is well-known that $\w_1\leq \mathfrak t\leq \min\{\mathfrak s,\mathfrak b\}$.

An ultrafilter $\F$ on $\w$ is called a {$P_{\mathfrak c}$ point} if $\F$ has a base which forms a maximal tower of cardinality $\mathfrak c$. The existence of such an ultrafilter is consistent with ZFC and, in particular, follows from $\mathfrak t=\mathfrak c$. 

Let $X,Y$ be topological spaces. A map $f:X\rightarrow Y$ is called an {\em embedding of $X$ into $Y$} if the map $f': X\rightarrow f(X)$, obtained by restricting the range of $f$, is a homeomorphism. The {\em weight} of a topological space $X$ is the smallest cardinal $w(X)$ such that $X$ has a base of cardinality $w(X)$.

A space $X$ is called 
\begin{itemize}
    \item {\em countably compact} if each closed discrete subset of $X$ is finite;
    \item {\em pseudocompact} if $X$ is Tychonoff and each continuous real-valued function is bounded;
    \item {\em feebly compact} if every locally finite family of open subsets of $X$ is finite.
\end{itemize}
 It is known that countable compactness implies feeble compactness. In the case of Tychonoff spaces pseudocompactness is equivalent to feeble compactness.

The {\em Pixley–Roy hyperspace} $\PR(X)$ of a space $X$ is the set $[X]^{<\w}$ of all finite subsets of $X$ endowed with the topology generated by the base consisting of the sets $$[F,U]=\{A\in [X]^{<\w}: F\subseteq A\subseteq U\},$$ where $F\in [X]^{<\w}$ and $U$ is open in $X$.  It can be easily checked that $\PR(X)$ is Hausdorff and zero-dimensional, if $X$ is Hausdorff. Moreover, if $X$ is first-countable, then so is $\PR(X)$.

The notions used but not defined in this paper are standard and can be found in~\cite{Buk,int, Eng, Kun}.
 
\section{Main results}
The majority of the results in this section are equivalences, each being based on an ``embedding theorem'' combined with a ``non-embedding theorem'', which are proved in sections 4 and 5, respectively.

\begin{theorem}\label{comp}
The following assertions are equivalent:
\begin{enumerate}
    \item $\w_1=\mathfrak c$.
    \item Every first-countable Tychonoff space of weight $<\mathfrak c$ embeds into a Hausdorff first-countable compact space.
    \item Each separable first-countable locally compact normal space of cardinality $<\mathfrak c$ embeds into a Hausdorff first-countable compact space.
\end{enumerate}
\end{theorem}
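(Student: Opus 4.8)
The plan is to prove the cycle $(1)\Rightarrow(2)\Rightarrow(3)\Rightarrow(1)$, with the last implication carrying essentially all the weight.

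For $(1)\Rightarrow(2)$, I would observe that under $\w_1=\mathfrak c$ the hypothesis ``weight $<\mathfrak c$'' simply means ``weight $\le\w$'', i.e. second countability. A second countable Tychonoff space is metrizable by the Urysohn metrization theorem, hence separable metrizable, hence embeddable into the Hilbert cube $[0,1]^\w$; and $[0,1]^\w$ is a compact metrizable, in particular first-countable Hausdorff, space. The implication $(2)\Rightarrow(3)$ is then a matter of checking containments of hypotheses: a space $X$ as in (3) is $T_1$ and normal, hence Tychonoff, and being first countable of cardinality $<\mathfrak c$ it has $w(X)\le|X|<\mathfrak c$, so it falls under the scope of (2).

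The heart of the matter is $(3)\Rightarrow(1)$, which I would prove contrapositively: assuming $\w_1<\mathfrak c$, I would exhibit a separable, first-countable, locally compact, normal space $X$ with $|X|=\w_1<\mathfrak c$ that embeds into no first-countable compact Hausdorff space. The key non-embedding principle is the elementary lemma that \emph{a countably compact subspace of a first-countable Hausdorff space is closed}: if $z$ lay in the closure of such a subspace $Y$ but outside $Y$, first countability would yield a sequence in $Y$ converging to $z$, whose thinned, injective range would be an infinite closed discrete subset of $Y$, contradicting countable compactness. Consequently, if $X$ contains a closed copy of the ordinal space $\w_1$ --- which is countably compact and non-compact --- then in any embedding $X\hookrightarrow K$ into a first-countable compact Hausdorff $K$ that copy would be closed in $K$, hence compact, a contradiction.

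It therefore remains to construct, in ZFC, such an $X$ containing $\w_1$ as a closed subspace. Here I would use a Franklin--Rajagopalan-type construction over a $\subseteq^*$-decreasing tower $\langle T_\alpha:\alpha<\w_1\rangle$ of infinite subsets of $\w$, which exists outright since $\w_1\le\mathfrak t$ guarantees that all countable initial segments have pseudo-intersections. The underlying set is $\w\cup\{x_\alpha:\alpha<\w_1\}$ with the points of $\w$ isolated (so that $\{x_\alpha:\alpha<\w_1\}$ is automatically closed), and the neighborhoods of $x_\alpha$ are arranged from $T_\alpha$ so that the subspace $\{x_\alpha:\alpha<\w_1\}$ carries exactly the order topology of $\w_1$ while $\w$ stays dense. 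The main obstacle is to carry out this bookkeeping so that the resulting space is simultaneously Hausdorff --- the real difficulty, since nested towers share infinitely many points, forcing one to separate distinct $x_\alpha$ using the almost-disjoint increments $T_\alpha\setminus T_{\alpha+1}$ --- first countable (via $\mathrm{cf}(\alpha)\le\w$), locally compact, and, subtlest of all, normal. I expect the verification of normality to be where the work concentrates; it should follow from the fact that two disjoint closed subsets of $\w_1$ cannot both be stationary (a closed stationary set is club, and clubs meet), which lets one first separate the traces on the closed copy of $\w_1$ and then extend the separation across the isolated points of $\w$. Crucially, because this construction needs only $\w_1\le\mathfrak t$ rather than maximality of the tower, the space $X$ need not be countably compact, which is exactly what keeps the argument inside ZFC and sidesteps the open Nyikos problem that a globally countably compact version would encounter.
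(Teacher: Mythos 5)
Your proposal is correct, and its skeleton coincides with the paper's: the same cycle $(1)\Rightarrow(2)\Rightarrow(3)\Rightarrow(1)$, with $(1)\Rightarrow(2)$ via second countability and the Hilbert cube, $(2)\Rightarrow(3)$ by checking hypotheses, and, for $(3)\Rightarrow(1)$, a separable normal locally compact first-countable space of cardinality $\omega_1$ containing a copy of the ordinal $\omega_1$ as the obstruction. The difference is in how that last step is implemented. The paper does not construct the counterexample: it invokes Proposition~2.11 of \cite{BSZ} for its existence, and rules out embeddings via the classical fact that the only compactification of $\omega_1$ is $\omega_1+1$, which is not first countable at its top point (so no first-countable compact Hausdorff space can contain the closure of a copy of $\omega_1$). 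You instead build the space explicitly as a Franklin--Rajagopalan extension of $\omega$ over a $\subseteq^*$-decreasing $\omega_1$-tower (available in ZFC), and you replace the compactification fact by the more elementary lemma that a countably compact subspace of a first-countable Hausdorff space is closed, whence the copy of $\omega_1$ in a putative first-countable compact Hausdorff superspace would be closed, hence compact --- a contradiction. Both obstructions are the same in substance, and your non-embedding lemma is arguably more self-contained than the appeal to $\beta\omega_1=\omega_1+1$. The price you pay is the verification of normality of the tower space, which you correctly identify as the delicate point; it does go through as you indicate: of the two disjoint closed traces on the copy of $\omega_1$, one is bounded, hence compact, so (using local compactness, or the fact that basic neighborhoods in this space are compact clopen sets) it can be enclosed in a clopen set missing the other closed set, after which the isolated points of $\omega$ are absorbed into the separating sets without disturbing disjointness.
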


\begin{theorem} \label{lc} The following assertions are equivalent.   
\begin{enumerate}	
\item $\mathfrak b = \mathfrak c$.
\item Every Hausdorff locally compact first-countable space of weight $< \mathfrak c$ can be densely embedded in a Hausdorff first-countable locally compact countably compact space.
 \item Every Hausdorff locally compact first-countable space of cardinality $< \mathfrak c$ can be densely embedded in a Hausdorff first-countable  countably compact space.
\end{enumerate}
  \end{theorem}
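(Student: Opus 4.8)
The plan is to establish the cycle of implications $(1)\Ra(2)\Ra(3)\Ra(1)$. The implication $(2)\Ra(3)$ is the routine bridge: a first-countable Hausdorff space $X$ with $|X|<\mathfrak c$ automatically has $w(X)\le |X|<\mathfrak c$, so $(2)$ applies to it and produces a dense embedding into a Hausdorff first-countable locally compact countably compact space; forgetting local compactness of the target yields exactly the conclusion of $(3)$. Thus all the content sits in the two ``hard'' directions, $(1)\Ra(2)$ (an embedding theorem) and $(3)\Ra(1)$ (a non-embedding theorem), which I would isolate as the principal results of the two later sections.

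For $(1)\Ra(2)$ I would run a transfinite recursion of length $\mathfrak c$. Starting from the given $X=X_0$, I build an increasing chain $(X_\xi)_{\xi<\mathfrak c}$ of Hausdorff, locally compact, first-countable spaces in which $X$ is dense and open, where at a successor step $X_{\xi+1}$ is obtained from $X_\xi$ by adjoining a single point $p_\xi$ declared to be the limit of a prescribed infinite closed discrete set $D_\xi=\{d^\xi_n:n\in\omega\}\subseteq X_\xi$. Choosing pairwise disjoint compact neighborhoods $K^\xi_n\ni d^\xi_n$ and letting the basic neighborhoods of $p_\xi$ be $\{p_\xi\}\cup\bigcup_{n\ge m}K^\xi_n$ keeps the extension Hausdorff, first-countable and locally compact, since these tails are compact (one-point compactifications of the disjoint union of the $K^\xi_n$). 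At limit stages I take unions and I set $Y=\bigcup_{\xi<\mathfrak c}X_\xi$. The first point where $\mathfrak b=\mathfrak c$ enters is that it forces $\mathfrak c$ to be regular; combined with $\mathfrak c^\omega=\mathfrak c$ this lets me enumerate in order type $\mathfrak c$ all countable subsets that will ever arise and, by a closing-off/bookkeeping argument, guarantee that every infinite closed discrete subset of $Y$ is equal to some $D_\xi$ and hence acquires a limit, so $Y$ is countably compact.

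The decisive use of unboundedness is in controlling the newly created points: an $\omega$-sequence of previously added limit points $p_{\xi_0},p_{\xi_1},\dots$ is itself a candidate closed discrete set whose rate of escape must be tamed so that it too is captured later rather than outrunning the construction. I would fix a $\leq^*$-increasing, $\leq^*$-unbounded scale $(g_\xi)_{\xi<\mathfrak c}$, available since $\mathfrak b=\mathfrak d=\mathfrak c$, and use $g_\xi$ to calibrate how fast $p_\xi$ converges into $D_\xi$; unboundedness then ensures that no tower of new points can diagonalize out of every $g_\xi$, so each such sequence is eventually dominated and given a limit. I expect this coherence requirement, namely keeping Hausdorffness, local compactness and first-countability simultaneously while proving that the recursion ``catches its tail'', to be the main obstacle, and the scale is exactly the device that resolves it.

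For the non-embedding theorem $(3)\Ra(1)$ I argue by contraposition: assuming $\mathfrak b<\mathfrak c$ I produce a separable, Hausdorff, locally compact, first-countable space $X$ with $|X|\le\mathfrak b<\mathfrak c$ that densely embeds into no first-countable countably compact space. I would take a $\leq^*$-increasing unbounded family $\{f_\alpha:\alpha<\mathfrak b\}$ and build $X$ on $(\omega\times\omega)\cup\{p_\alpha:\alpha<\mathfrak b\}$ so that the isolated grid $\omega\times\omega$ carries a distinguished family of escaping sequences encoding the $f_\alpha$. If $X$ were dense (hence open) in a first-countable countably compact $Y$, countable compactness would supply a limit $y$ in the remainder $Y\setminus X$ for a suitable escaping sequence, and the countable decreasing neighborhood base of $y$, read against the grid, would yield a single function $g\in\omega^\omega$ that $\leq^*$-dominates cofinally many, in fact all, of the $f_\alpha$, contradicting unboundedness. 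The main obstacle here is engineering $X$ so that a first-countable limit point is forced to produce a genuinely dominating function, in particular arranging Hausdorffness among the $p_\alpha$, where the naive ``region below the graph'' neighborhoods fail; once the space is correctly designed the contradiction with $\mathfrak b<\mathfrak c$ is immediate.
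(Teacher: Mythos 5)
Your overall architecture (cycle of implications, embedding theorem plus non-embedding theorem, transfinite recursion of length $\mathfrak c$ with bookkeeping made possible by the regularity of $\mathfrak b=\mathfrak c$) matches the paper, and your $(3)\Ra(1)$ half is essentially the paper's Proposition~\ref{b}: a Mrowka-type space over a $\leq^*$-increasing unbounded family together with the verticals, where the Hausdorffness obstacle you flag among the $p_\alpha$ is resolved in the paper by choosing the functions $b_\alpha$ with almost disjoint ranges. However, your $(1)\Ra(2)$ construction has a genuine gap, in fact two. First, pairwise disjointness of the compact neighborhoods $K^\xi_n$ does \emph{not} make the one-point extension Hausdorff: if the family $\{K^\xi_n:n\in\w\}$ fails to be locally finite at some $x\in X_\xi$, then every neighborhood of $x$ meets every tail $\bigcup_{n\ge m}K^\xi_n$, so $x$ and $p_\xi$ cannot be separated. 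What is needed is an expansion of $D_\xi$ to a \emph{discrete} family of open sets, and securing such expansions is exactly where $\mathfrak b=\mathfrak c$ works in the paper: regular first-countable spaces of Lindel\"of number (hence of weight) $<\mathfrak b$ are pseudonormal and therefore have property D (Propositions~\ref{propD}, \ref{Lindelof} and Corollary~\ref{useful}). Your proposed use of $\mathfrak b=\mathfrak c$ --- a dominating scale calibrating ``rates of convergence'' of the new points --- does not substitute for this; in the paper's locally compact construction (Proposition~\ref{lce}) no scale appears at all, only property D plus the regularity of $\mathfrak b$.

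Second, and more damaging, the one-point extension destroys local compactness at the new point, so the induction cannot maintain the conclusion of (2), and it also loses regularity, which is what allows property D to be re-applied at later stages. Your tails $\{p_\xi\}\cup\bigcup_{n\ge m}K^\xi_n$ are \emph{not} compact: the basic neighborhoods of $p_\xi$ trace on this set only tails of the interiors $\Int K^\xi_n$, not of the full $K^\xi_n$, so the subspace topology at $p_\xi$ is strictly finer than that of the one-point compactification of the disjoint union $\bigsqcup_n K^\xi_n$ whenever the $K^\xi_n$ are not open. Concretely, take $X=\w\times\R$ and $D=\{(n,0):n\in\w\}$: for any choice of open sets $O_n\ni(n,0)$ with compact closures, points $y_n\in \cl(O_n)\setminus O_n$ form an infinite closed discrete subset of the extension that lies in the closure of every candidate compact neighborhood of $p$, so $p$ has no compact neighborhood even though $X$ is metrizable and locally compact. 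This is precisely why the paper glues a copy of $[0,1)$ along Urysohn functions $f_n$ in Proposition~\ref{lce}: sequences escaping through the level sets $f_m^{-1}(y)$ then converge to points $(y,1)$ of the attached interval, and the closures of basic neighborhoods of the new points are $\sigma$-compact and countably compact, hence compact. Without some such device your successor step fails, so the embedding half of your argument does not go through as written.
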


Observe that $\mathfrak b=\mathfrak s=\mathfrak c$ follows from ($\heartsuit$). So, the following result generalizes Theorem~\ref{BZ}.
  
\begin{theorem} \label{sbc} The following assertions are equivalent.   
\begin{enumerate}	
\item $\mathfrak b = \mathfrak s= \mathfrak c$.

 \item Every regular first-countable space of weight $< \mathfrak c$ can be densely embedded in a regular first-countable countably compact space.
  \item Every regular first-countable space of cardinality $< \mathfrak c$ can be densely embedded in a regular first-countable countably compact space.
 \end{enumerate}
  \end{theorem}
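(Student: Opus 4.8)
The plan is to prove the cycle $(1)\Ra(2)\Ra(3)\Ra(1)$, following the embedding/non-embedding split announced at the start of this section. The implication $(2)\Ra(3)$ is the painless one: for an infinite first-countable space the union of countable local bases at all points is a base, so $w(X)\le|X|$; hence $|X|<\mathfrak c$ forces $w(X)<\mathfrak c$, and (3) becomes a special case of (2). The substance therefore lies in the embedding theorem $(1)\Ra(2)$ (carried out in Section~4) and the non-embedding theorem $(3)\Ra(1)$ (Section~5), the former building an extension from the cardinal equality and the latter building a counterexample from its failure.

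For $(1)\Ra(2)$ I would start from a regular first-countable $X$ with $w(X)<\mathfrak c$; counting countable subfamilies of a base of size $<\mathfrak c$ gives $|X|\le w(X)^{\aleph_0}\le\mathfrak c$, so $X$ and its extension live below $2^{\mathfrak c}$. I would then construct a dense regular first-countable countably compact $Y\supseteq X$ by a recursion of length $\mathfrak c$. Using $\mathfrak c^{\w}=\mathfrak c$ I fix a bookkeeping enumeration of all $\w$-sequences that can appear in a space of size $\le\mathfrak c$, and at stage $\alpha$ I attach a single new point $p$ as the limit of a carefully chosen subsequence of the $\alpha$-th sequence that is still closed discrete, adding at most $\mathfrak c$ points in total; the resulting $Y$ is sequentially compact and hence, being first-countable, countably compact. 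Two equalities drive each step. The hypothesis $\mathfrak b=\mathfrak c$ is used Ostaszewski-style to build the countable neighbourhood filter of $p$: fewer than $\mathfrak c$ constraints have accrued so far, so by $\mathfrak b=\mathfrak c$ they form a $\le^*$-bounded family and one can pick a function outrunning them, yielding a decreasing neighbourhood base that makes the chosen subsequence converge while creating no spurious convergence. The hypothesis $\mathfrak s=\mathfrak c$ is what replaces local compactness (the departure from Theorem~\ref{lc}): since no family of fewer than $\mathfrak c$ subsets of $\w$ is splitting, I can find an infinite set that each previously recorded neighbourhood trace almost contains or is almost disjoint from, and thin the subsequence accordingly, which is exactly what separates $p$ from the old points and closed sets and so preserves regularity.

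For $(3)\Ra(1)$ I would argue contrapositively, splitting the failure of (1) into $\mathfrak b<\mathfrak c$ and $\mathfrak s<\mathfrak c$. If $\mathfrak b<\mathfrak c$, then clause (3) of Theorem~\ref{lc} fails, delivering a Hausdorff locally compact --- hence regular --- first-countable space of cardinality $<\mathfrak c$ that embeds densely into no Hausdorff (and therefore into no regular) first-countable countably compact space, refuting (3) at once. The case $\mathfrak s<\mathfrak c$ carries the genuine content. Here I would fix a splitting family of size $\mathfrak s<\mathfrak c$ and use it to manufacture a regular first-countable, necessarily non-locally-compact, space $W$ with $|W|<\mathfrak c$; the Pixley--Roy hyperspace $\PR(Z)$ over a suitably coded $Z$ is the natural vehicle, since $\PR$ preserves first-countability, zero-dimensionality and the Hausdorff property while giving tight control over which sequences a dense extension is able to converge.

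The main obstacle is precisely this $\mathfrak s<\mathfrak c$ construction. The aim is to encode into $W$ a family of $\mathfrak c$-many mutually independent branching sequences so that any dense regular first-countable countably compact extension $Y$ would be compelled to furnish, at its countably based accumulation points, a collection of neighbourhood traces that collectively resolve each of these sequences into a convergent subsequence; regularity and Hausdorffness of $Y$ should force these traces to behave like a splitting family, while first-countability should cap the number of independent decisions any one accumulation point can make. Extracting from this a splitting family of size $<\mathfrak c$ that simultaneously resolves all $\mathfrak c$-many requirements --- the contradiction with $\mathfrak s<\mathfrak c$ --- and at the same time verifying that $W$ is genuinely regular, first-countable, of size $<\mathfrak c$, and admits no such extension at all, is the delicate heart of the argument; I expect the correct combinatorial invariant and the exact shape of $Z$ to be the points requiring the most care.
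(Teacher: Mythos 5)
Your overall architecture and two-thirds of the content coincide with the paper's proof: $(2)\Rightarrow(3)$ is the weight-versus-cardinality remark; your recursion for $(1)\Rightarrow(2)$ --- bookkeeping of length $\mathfrak c$, with $\mathfrak s=\mathfrak c$ used to thin a closed discrete sequence so that it lies almost inside or almost outside each recorded basic ``annulus'' $U^x_{n}\setminus U^x_{n+1}$, and $\mathfrak b=\mathfrak c$ used to dominate the induced functions and obtain a locally finite family of neighbourhoods for the new point --- is exactly the mechanism of Proposition~\ref{mainr}; and your treatment of the case $\mathfrak b<\mathfrak c$ in $(3)\Rightarrow(1)$ via a Mrowka-type space (Proposition~\ref{b}, equivalently the failure of Theorem~\ref{lc}(3)) is also the paper's. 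The genuine gap is the case $\mathfrak s<\mathfrak c$ of $(3)\Rightarrow(1)$, which you explicitly leave unfinished; since everything else is already contained in Theorem~\ref{lc} or is routine, this case is the actual content of the theorem and cannot be left as ``the delicate heart requiring the most care''.

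Two things go wrong in your sketch of that case. First, the logic is inverted: you propose to \emph{extract} a splitting family of size $<\mathfrak c$ from a hypothetical extension and call this ``the contradiction with $\mathfrak s<\mathfrak c$'', but $\mathfrak s<\mathfrak c$ asserts precisely that such a family \emph{exists}, so producing one space and one small splitting family contradicts nothing. The correct direction, carried out in Proposition~\ref{De}, is to start from a splitting family $\mathcal S$ of cardinality $\mathfrak s$ and build the splitting into the topology of the counterexample: on $[0,1]{\times}\{0\}\cup\mathbb Q{\times}\mathbb N$ one deletes $Q_x{\times}A_x$ from every basic neighbourhood of $(x,0)$, where $Q_x$ is a fixed sequence of rationals converging to $x$ and $x\mapsto A_x$ enumerates $\mathcal P(\omega)$, and then keeps only the bottom points with $A_x\in\mathcal S$; this yields a separable normal Lindel\"of first-countable space $Y$ of cardinality $\mathfrak s$. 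The splitting property of $\mathcal S$ gives the key Claim: in any dense regular first-countable \emph{feebly compact} extension $Z$, for every infinite $B\subseteq\mathbb N$ there is $q\in\mathbb Q$ such that $\{q\}{\times}B$ does not converge in $Z$. Against this, feeble compactness and first countability of $Z$ allow one to choose recursively infinite sets $A_0\supseteq A_1\supseteq\cdots$ so that each column $\{(q_i,n):n\in A_i\}$ converges, and a pseudointersection $A$ of this countable chain (which exists outright, with no cardinal hypothesis) makes \emph{all} columns converge simultaneously, contradicting the Claim; since countably compact implies feebly compact, this refutes assertion (3). Nothing in your proposal supplies either the coding or this pseudointersection diagonalization. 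Second, your proposed vehicle is calibrated to the wrong cardinal: Pixley--Roy obstructions (Proposition~\ref{bpseudo}, Theorem~\ref{PR}) are governed by $\lambda$-sets and hence by $\mathfrak b$ --- every set of reals of cardinality $<\mathfrak b$ is a $\lambda$-set, the least non-$\lambda$-set having size exactly $\mathfrak b$ --- so in a model of $\mathfrak s<\mathfrak b=\mathfrak c$ (e.g.\ the Hechler model) every $\PR(Z)$ with $|Z|<\mathfrak c$ sits over a $\lambda$-set and, by Theorem~\ref{PR}, densely embeds into a first-countable pseudocompact space; the convergence-trace mechanism you describe would then have to separate countably compact from pseudocompact extensions, and your sketch makes no such distinction.
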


Theorem~\ref{sbc} has the following zero-dimensional counterpart.

\begin{theorem} \label{sbc1} The following assertions are equivalent.   
\begin{enumerate}	
\item $\mathfrak b = \mathfrak s= \mathfrak c$.

\item Every Hausdorff zero-dimensional first-countable space of weight $< \mathfrak c$ can be densely embedded in a Hausdorff zero-dimensional first-countable countably compact space.
\item Every Hausdorff zero-dimensional first-countable space of cardinality $< \mathfrak c$ can be densely embedded in a Hausdorff zero-dimensional first-countable countably compact space.
 \end{enumerate}
  \end{theorem}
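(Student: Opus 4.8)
The plan is to run the cycle $(1)\Rightarrow(2)\Rightarrow(3)\Rightarrow(1)$, following the same template as Theorem~\ref{sbc}: an \emph{embedding theorem} gives $(1)\Rightarrow(2)$, a cardinal-arithmetic triviality gives $(2)\Rightarrow(3)$, and a \emph{non-embedding theorem} gives $(3)\Rightarrow(1)$. The only genuinely new ingredient compared with Theorem~\ref{sbc} is that both the extension and the embedding must respect zero-dimensionality.

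The implication $(2)\Rightarrow(3)$ is immediate: a first-countable space satisfies $w(X)\le\max(|X|,\w)$, because picking a countable local base at each point produces a base of size at most $|X|\cdot\w$. Since $\w<\mathfrak c$, every space of cardinality $<\mathfrak c$ has weight $<\mathfrak c$, so it already falls under the hypothesis of $(2)$, whose conclusion is identical to that of $(3)$. For $(3)\Rightarrow(1)$ I would argue by contraposition and reuse the non-embedding witnesses: if $\mathfrak b<\mathfrak c$ or $\mathfrak s<\mathfrak c$, the tower/Mr\'owka-type spaces that witness the failure of the regular embedding statement in Theorem~\ref{sbc} are themselves zero-dimensional, and a space that does not densely embed into \emph{any} regular first-countable countably compact space a fortiori does not densely embed into any zero-dimensional such space. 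Thus this direction is in fact easier than its counterpart in Theorem~\ref{sbc}, since shrinking the class of admissible targets can only help non-embedding.

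The substance is $(1)\Rightarrow(2)$. Assume $\mathfrak b=\mathfrak s=\mathfrak c$ and fix a Hausdorff zero-dimensional first-countable $X$ with a clopen base $\mathcal B$ of size $w(X)<\mathfrak c$. I would construct $Y\supseteq X$ by transfinite recursion of length $\mathfrak c$, at stage $\alpha$ choosing an infinite subset $A_\alpha$ of the space built so far that still has no accumulation point and adjoining a point $y_\alpha$ to which an infinite subsequence $A_\alpha'\subseteq A_\alpha$ converges. Zero-dimensionality is what makes the convergent sequences available: at stage $\alpha$ the total clopen base accumulated so far has size $<\mathfrak c=\mathfrak s$, hence is not a splitting family, so $A_\alpha$ contains an infinite $A_\alpha'$ unsplit by every clopen set seen so far; declaring $A_\alpha'$ to converge to $y_\alpha$ then endows $y_\alpha$ with a \emph{clopen} countable neighborhood base and keeps $Y$ zero-dimensional. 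The hypothesis $\mathfrak b=\mathfrak c$ controls the recursion: at each of the $<\mathfrak c=\mathfrak b$ stages the fewer-than-$\mathfrak b$ functions encoding the sequences already handled are $\le^*$-bounded, and dominating them lets one attach to $y_\alpha$ a genuine countable base while keeping every point of countable character.

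The main obstacle is precisely this construction, where four conditions must be maintained \emph{simultaneously} through the recursion: (i) countable compactness of the final $Y$, which forces one to enumerate and eventually catch $\mathfrak c$-many infinite sets, including those manufactured by the extension itself; (ii) first-countability of $Y$; (iii) zero-dimensionality, i.e. a coherent clopen base at every stage; and (iv) denseness of $X$ with its original subspace topology. The tension between (i) and (ii)--(iii) is exactly what the two equalities resolve: $\mathfrak s=\mathfrak c$ supplies unsplit sequences compatible with the existing clopen structure, while $\mathfrak b=\mathfrak c$ bounds the bookkeeping needed to keep every point first-countable. Relative to Theorem~\ref{sbc} the only additional burden is (iii); but since each adjoined sequence is unsplit by the ambient clopen base --- equivalently, the whole gadget can be modelled inside a zero-dimensional ambient space such as $\PR(X)$, which is automatically zero-dimensional --- I expect maintaining clopen bases to be careful bookkeeping rather than a new idea, so that the shared combinatorial core is the true difficulty.
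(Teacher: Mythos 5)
Your outline reproduces the paper's proof scheme: $(1)\Rightarrow(2)$ is the zero-dimensional extension construction (the paper's Proposition~\ref{mainz}, with exactly the ingredients you name: an unsplit subsequence from $\mathfrak s=\mathfrak c$, then domination from $\mathfrak b=\mathfrak c$ to assemble a locally finite family of clopen neighborhoods whose union, together with the new point, is clopen); $(2)\Rightarrow(3)$ is the weight-versus-cardinality triviality; and $(3)\Rightarrow(1)$ reuses the non-embedding witnesses of Proposition~\ref{b} and Proposition~\ref{De}, exactly as the paper does.

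There is, however, one genuine gap in your $(3)\Rightarrow(1)$: the claim that the witnesses of Theorem~\ref{sbc} ``are themselves zero-dimensional.'' This is true for the Mr\'owka space $\psi(\mathcal A)$ of Proposition~\ref{b}, whose standard base is clopen, so the $\mathfrak b=\mathfrak c$ half is fine. But the $\mathfrak s$-witness is not a tower-type space; it is the van Douwen--Przymusi\'nski subspace $Y$ of Proposition~\ref{De}, and its zero-dimensionality cannot be read off from the construction: the basic neighborhoods $B_m(x)\cap Y$ need not be clopen, since a point $(a,0)\in Y$ with $|x-a|=1/m$ lies in their closure (any basic neighborhood of $(a,0)$ and $B_m(x)$ both contain whole columns $\{q\}{\times}\N$ for rationals $q$ in the overlap of the two intervals avoiding $Q_x\cup Q_a\cup\{x,a\}$). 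The paper closes this gap with a cardinality argument that your write-up omits: zero-dimensionality of $Y$ is only needed under the contrapositive hypothesis $\mathfrak s<\mathfrak c$, and there it follows from the standard fact that \emph{every Tychonoff space of cardinality $<\mathfrak c$ is zero-dimensional} (a continuous $f:Y\to[0,1]$ has range of size $<\mathfrak c$, so preimages $f^{-1}([0,r))=f^{-1}([0,r])$ for $r$ outside the range are clopen); this applies because $Y$ is regular Lindel\"of, hence normal and Tychonoff, and $|Y|=\mathfrak s$. With this one fact inserted your argument is complete and agrees with the paper's; without it, the assertion that the $\mathfrak s$-witness is zero-dimensional is unsupported, and it is precisely the step a direct inspection of $Y$ would fail to deliver.
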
  

Combining Theorem~\ref{sbc} with the results from~\cite{BZ} we get the following.

 \begin{theorem}\label{rigid} The following assertions are equivalent:
\begin{enumerate}
    \item $\mathfrak b = \mathfrak s= \mathfrak c$.
    \item Every regular separable first-countable non-normal space of weight $< \mathfrak c$ embeds into an $\mathbb R$-rigid Nyikos space.
     \item Every regular separable first-countable non-normal space of cardinality $< \mathfrak c$ embeds into an $\mathbb R$-rigid Nyikos  space.
\end{enumerate}   
 \end{theorem}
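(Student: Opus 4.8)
The goal is to prove Theorem~\ref{rigid}, which asserts the equivalence of $\mathfrak b = \mathfrak s = \mathfrak c$ with two embedding statements (by weight and by cardinality) into $\mathbb R$-rigid Nyikos spaces. The plan is to leverage Theorem~\ref{sbc} together with the two-step construction described after Theorem~\ref{BZZ}, which combines an $\mathbb R$-rigid space built in \cite{BZ} with a dense countably compact extension. Throughout, recall that a Nyikos space is a regular separable first-countable countably compact space, and that dense embeddings inherit separability and $\mathbb R$-rigidity under the right hypotheses.

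The implications $(2)\Rightarrow(3)$ and the reverse are handled first, as the weight-to-cardinality passage is routine: for Hausdorff first-countable spaces, cardinality $\le$ weight always holds, while a separable first-countable space has weight $\le \mathfrak c$; so a space of cardinality $<\mathfrak c$ need not have weight $<\mathfrak c$, meaning $(2)\Rightarrow(3)$ is the nontrivial direction and requires care. I would derive $(3)$ from $(2)$ by first densely embedding the given space of small cardinality into one of small weight (using that such a space, being separable and first-countable, can be suitably adjusted), or alternatively derive both $(2)$ and $(3)$ independently from $(1)$ and show each implies $(1)$ via the non-embedding theorem of Section~5.

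For the main content, to prove $(1)\Rightarrow(2)$: assume $\mathfrak b = \mathfrak s = \mathfrak c$ and let $X$ be a regular separable first-countable non-normal space of weight $<\mathfrak c$. By Theorem~\ref{sbc}, $X$ densely embeds into a regular first-countable countably compact space $Y$. The task is to show $Y$ can be taken to be an $\mathbb R$-rigid Nyikos space, i.e.\ that $Y$ is separable (inherited from $X$ via density) and $\mathbb R$-rigid. Here I would invoke the construction from \cite{BZ}: the non-normality and separability of $X$ should be exploited to arrange, inside the extension, that every continuous real-valued function on $Y$ restricts to a constant on the dense subspace $X$ and hence is constant on $Y$. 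The key will be combining the $\mathbb R$-rigidification technique of \cite{BZ} with the extension machinery of Theorem~\ref{sbc} so that the two operations are compatible. For the converse $(2)\Rightarrow(1)$ (and $(3)\Rightarrow(1)$), I would apply the non-embedding results of Section~5: if $\mathfrak b = \mathfrak s = \mathfrak c$ fails, one exhibits a specific regular separable first-countable non-normal space that admits no dense embedding into any regular first-countable countably compact space, a fortiori none into an $\mathbb R$-rigid Nyikos space.

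The main obstacle is the compatibility in $(1)\Rightarrow(2)$: ensuring that the dense countably compact extension produced by Theorem~\ref{sbc} can simultaneously be made $\mathbb R$-rigid while keeping the space separable and first-countable. Separability transfers automatically along dense embeddings, and first-countability and countable compactness are guaranteed by Theorem~\ref{sbc}; the delicate point is $\mathbb R$-rigidity, since the extension adds new points (the limits witnessing countable compactness) at which continuous functions must also be forced constant. I expect the resolution to come from feeding into Theorem~\ref{sbc} not the original $X$ but an $\mathbb R$-rigid modification of it constructed as in \cite{BZ}, and then verifying that the extension preserves $\mathbb R$-rigidity—this verification, rather than any new combinatorics, is where the real work lies.
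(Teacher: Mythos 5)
Your plan for the core implication $(1)\Rightarrow(2)$ coincides with the paper's proof: one first makes the space $\mathbb R$-rigid and only afterwards applies the countably compact extension. Concretely, the paper uses the non-normality of $X$ to run Jones' machine, embedding $X$ into a regular separable first-countable space $J(X)$ of weight $<\mathfrak c$ containing two points $a,b$ that no continuous real-valued function separates, then applies the modified van Douwen extension $E_{a,b}(J(X))$ of \cite{BZ} to get a regular separable first-countable $\mathbb R$-rigid space of weight $<\mathfrak c$, and finally applies Proposition~\ref{mainr} to that space. One remark: the ``verification'' you single out as the real work --- that the dense countably compact extension preserves $\mathbb R$-rigidity --- is immediate and requires no compatibility between the two constructions. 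If $D$ is dense in $Z$ and $\mathbb R$-rigid, then for any continuous $g\colon Z\to\mathbb R$ the restriction of $g$ to $D$ is constant with some value $c$, so the closed set $g^{-1}(c)$ contains the dense set $D$ and equals $Z$; separability passes to $Z$ for the same reason. So your first suggestion (rebuilding the machinery of Theorem~\ref{sbc} so that the extension itself forces rigidity) is unnecessary, and your second suggestion is exactly what the paper does.

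There are, however, genuine gaps elsewhere. First, your weight/cardinality discussion is backwards: your claim that ``cardinality $\le$ weight always holds'' for Hausdorff first-countable spaces is false (the real line has countable weight and cardinality $\mathfrak c$). The correct inequality is the opposite one: for an infinite first-countable space the union of countable local bases at all points is a base, so weight $\le$ cardinality. Hence every space in the class of $(3)$ already has weight $<\mathfrak c$, the implication $(2)\Rightarrow(3)$ is the trivial one, and the detour you propose (densely embedding a space of small cardinality into one of small weight) is both unnecessary and unsupported. Second, and more seriously, in the converse direction you assert that when $\mathfrak b=\mathfrak s=\mathfrak c$ fails ``one exhibits a specific regular separable first-countable \emph{non-normal} space'' admitting no embedding into a Nyikos space; but the witnesses produced in Section~5 are not non-normal, and $(2)$ and $(3)$ quantify only over non-normal spaces. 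Indeed the witness for $\mathfrak s$, the space $Y$ of Proposition~\ref{De}, is explicitly \emph{normal} (being separable, regular and Lindel\"of), so it does not by itself refute $(3)$. The paper repairs this with Lemma~\ref{nonnormal}: there is a regular zero-dimensional separable first-countable non-normal space $N$ of cardinality $\w_1$ (built from a Hausdorff $(\w_1,\w_1)$-gap), and the actual witnesses are the topological sums of $N$ with $\psi(\mathcal A)$ from Proposition~\ref{b} (cardinality $\mathfrak b$) and of $N$ with $Y$ from Proposition~\ref{De} (cardinality $\mathfrak s$). Without this step, or some substitute for it, the implications $(3)\Rightarrow(1)$ and $(2)\Rightarrow(1)$ do not go through.

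A smaller but still necessary point: $(2)$ and $(3)$ concern arbitrary embeddings, whereas Proposition~\ref{De} only excludes \emph{dense} embeddings into regular first-countable feebly compact spaces, so your ``a fortiori'' elides a step. One must note that if $Y$ embeds into a Nyikos space $Z$, then $Y$ is dense in $\cl_Z(Y)$, which is regular, first-countable and countably compact (being closed in $Z$), hence feebly compact --- and this is what contradicts Proposition~\ref{De}. (For the $\mathfrak b$ witness this issue does not arise, since Proposition~\ref{b} excludes all embeddings of the Mrowka space into first-countable Hausdorff countably compact spaces.)
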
 

Theorem~\ref{rigid} allows us to prove the following analogue of Theorem~\ref{BZZ} using a milder assumption. 
\begin{theorem}\label{BZZnew}
$(\w_1<\mathfrak b=\mathfrak s=\mathfrak c)$ There exists an $\mathbb R$-rigid  Nyikos space.   
\end{theorem}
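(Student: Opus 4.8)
The plan is to derive Theorem~\ref{BZZnew} directly from Theorem~\ref{rigid}, reducing the task to producing a single concrete space that fits the hypotheses of item~(2) or~(3) of that theorem. Under the assumption $\w_1<\mathfrak b=\mathfrak s=\mathfrak c$, the equivalence established in Theorem~\ref{rigid} tells us that every regular separable first-countable \emph{non-normal} space of weight (equivalently, cardinality) $<\mathfrak c$ embeds into an $\mathbb R$-rigid Nyikos space. So it suffices to exhibit one such space $X$, whose existence we can presumably extract from the constructions in~\cite{BZ} used to prove Theorem~\ref{BZZ}. The whole point of passing through Theorem~\ref{rigid} rather than reproving Theorem~\ref{BZZ} is that here the underlying set-theoretic hypothesis is weaker: we only need $\w_1<\mathfrak b=\mathfrak s=\mathfrak c$, rather than the full assumption $(\varheart)$ involving a simple $P_{\mathfrak c}$-point and the constraint $\w_1=\mathfrak t$.

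First I would recall, from the first step of the proof of Theorem~\ref{BZZ} as described in the introduction, the explicit construction of a regular separable first-countable $\mathbb R$-rigid space $X$ of cardinality $<\mathfrak c$. The key observation I would verify is that this space $X$ is genuinely \emph{non-normal}: an $\mathbb R$-rigid space admits no non-constant continuous real-valued function, so it cannot be Tychonoff, and one should check that in fact it fails normality outright (so that it legitimately sits in the class named in item~(2)/(3) of Theorem~\ref{rigid}). This is the natural place where the hypotheses match up, since $\mathbb R$-rigidity is precisely what forces non-normality once the space has more than one point and at least one pair of points needing separation by a function. The cardinality bound $|X|<\mathfrak c$ must also be confirmed; under $\mathfrak b=\mathfrak s=\mathfrak c$ together with $\w_1<\mathfrak c$, the construction should yield a space of size $\w_1$ (or more generally of size $<\mathfrak c$), so the weight is likewise $<\mathfrak c$.

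Having produced such an $X$, I would simply invoke the implication $(1)\Ra(3)$ of Theorem~\ref{rigid}: since $\mathfrak b=\mathfrak s=\mathfrak c$ holds, the space $X$ embeds into an $\mathbb R$-rigid Nyikos space $Y$. This $Y$ is then the desired witness, establishing the existence statement of Theorem~\ref{BZZnew} under the stated hypothesis. The logical structure is therefore short: the heavy lifting has already been delegated to Theorem~\ref{rigid}, which itself rests on the embedding theorem of Section~4 (the ``embedding theorem'' supplying the passage from a weight-$<\mathfrak c$ space to a countably compact extension) and on the $\mathbb R$-rigid construction imported from~\cite{BZ}.

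The main obstacle I anticipate is the first step: verifying that the $\mathbb R$-rigid space from~\cite{BZ} can be built, or adapted, under the weaker hypothesis $\w_1<\mathfrak b=\mathfrak s=\mathfrak c$ alone, without the simple $P_{\mathfrak c}$-point or the $\w_1=\mathfrak t$ assumption that were part of $(\varheart)$. In the original argument, $\mathbb R$-rigidity and the bound on cardinality may have leaned on features of $(\varheart)$, so I would need to isolate exactly which properties of $X$ are actually used by item~(2)/(3) of Theorem~\ref{rigid} — namely regularity, separability, first countability, non-normality, and weight $<\mathfrak c$ — and confirm that a space with these properties (and $\mathbb R$-rigidity) exists in ZFC or under the present weaker hypothesis. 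Once that is pinned down, the condition $\w_1<\mathfrak c$ guarantees room to choose $X$ of weight strictly below $\mathfrak c$, and the rest follows formally from Theorem~\ref{rigid}.
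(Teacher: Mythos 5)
Your high-level strategy---reduce everything to Theorem~\ref{rigid} and feed it a single concrete seed space---is exactly the paper's, but your choice of seed leaves a genuine gap that you flag yourself and never close. You propose to take as seed the regular separable first-countable $\mathbb R$-rigid space of cardinality $<\mathfrak c$ from the first step of the proof of Theorem~\ref{BZZ} in \cite{BZ}. That space was constructed under $(\varheart)$, i.e.\ using $\w_1=\mathfrak t$ and a simple $P_{\mathfrak c}$-point, and neither ingredient is available here: the hypothesis $\w_1<\mathfrak b=\mathfrak s=\mathfrak c$ implies neither $\mathfrak t=\w_1$ nor the existence of a simple $P_{\mathfrak c}$-point (the paper's own remark about the Mathias model shows the latter can fail while $\mathfrak b=\mathfrak s=\mathfrak c$ holds). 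So the proof is incomplete at precisely the point where a witness must be produced; ``verifying that the space can be built or adapted'' is the entire content of the missing step, not a routine check.

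The idea you are missing is that the seed need not be $\mathbb R$-rigid at all, and insisting on rigidity is what drags in the unwanted dependence on $(\varheart)$. Theorem~\ref{rigid}(3) only asks for a regular separable first-countable \emph{non-normal} space of cardinality $<\mathfrak c$; the rigidification happens inside the proof of Theorem~\ref{rigid} itself (Jones machine, then the modified van Douwen extension, then Proposition~\ref{mainr}). The paper therefore takes as seed the space $N$ of Lemma~\ref{nonnormal}: a regular zero-dimensional separable first-countable non-normal space of cardinality $\w_1$, built outright in ZFC from a Hausdorff $(\w_1,\w_1)$-gap. The hypothesis $\w_1<\mathfrak c$ then gives $|N|<\mathfrak c$, and Theorem~\ref{rigid}(3) immediately yields an $\mathbb R$-rigid Nyikos space containing $N$. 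Your Urysohn-type observation (a regular $\mathbb R$-rigid space with more than one point cannot be normal) is correct, but it points the construction in the wrong direction: rather than certifying non-normality of a rigid space you cannot build, one should start from a non-normal space that ZFC already provides.
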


Theorems~\ref{NZ} and \ref{rigid}
imply the following corollary which shows a profound contrast in the behavior of normal and non-normal Nyikos spaces under PFA.  
\begin{corollary}\label{cormain}
 {\em (PFA)} The following assertions hold:
 \begin{enumerate}
     \item Each normal Nyikos space is compact.
     \item Each regular separable first-countable space of weight $<\mathfrak c$ embeds into an $\mathbb R$-rigid Nyikos space.
 \end{enumerate}
\end{corollary}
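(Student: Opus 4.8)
The plan is to deduce the two items directly from the two theorems named in the statement, the only real work being to delete the adjective ``non-normal'' from the conclusion of Theorem~\ref{rigid}. Item (1) is nothing but Theorem~\ref{NZ}, so I would simply invoke it. For item (2) I would first record the two consequences of PFA that I need: PFA implies $\mathfrak c=\aleph_2$, and, since PFA contains Martin's Axiom, it yields $\mathfrak t=\mathfrak c$ and hence $\mathfrak b=\mathfrak s=\mathfrak c$ (recall $\w_1\le\mathfrak t\le\min\{\mathfrak s,\mathfrak b\}$). In particular hypothesis (1) of Theorem~\ref{rigid} holds, and $\aleph_1<\mathfrak c$.

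The key point is that Theorem~\ref{rigid} applies only to \emph{non-normal} spaces, whereas item (2) asserts the embedding for every regular separable first-countable space $X$ of weight $<\mathfrak c$, normal or not. To bridge this gap I would enlarge $X$ to a non-normal space of the same kind by forming a topological sum with a fixed ``bad'' piece. Concretely, fix in ZFC a Luzin almost disjoint family $\mathcal L\subseteq[\w]^\w$ with $|\mathcal L|=\aleph_1$ and let $N=\Psi(\mathcal L)=\w\cup\mathcal L$ be the associated Mr\'owka space; it is separable, first-countable, zero-dimensional (hence regular), non-normal, and of weight $\aleph_1$. Since $\aleph_1<\mathfrak c$ by the previous paragraph, $N$ is a non-normal regular separable first-countable space of weight $<\mathfrak c$.

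Now, given $X$ as in (2), I would pass to the topological sum $X'=X\sqcup N$. Each summand is clopen in $X'$, so $X'$ is regular, first-countable, separable (the union of a countable dense subset of $X$ with $\w$ is dense), and of weight $\max\{w(X),\aleph_1\}<\mathfrak c$; moreover $X'$ is non-normal, because the two disjoint closed subsets of the clopen piece $N$ witnessing its non-normality remain closed and inseparable in $X'$. Thus $X'$ satisfies the hypotheses of item (2) of Theorem~\ref{rigid}, which (using $\mathfrak b=\mathfrak s=\mathfrak c$) embeds $X'$ into an $\mathbb R$-rigid Nyikos space $Y$. Composing the clopen embedding $X\hookrightarrow X'$ with $X'\hookrightarrow Y$ produces an embedding of $X$ into $Y$, as required; note that $Y$, being $\mathbb R$-rigid and $T_1$ with more than one point, is automatically non-normal by Urysohn's lemma, which is precisely the source of the contrast with item (1).

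The step I expect to carry the real content is the reduction in the second and third paragraphs: it is essential to have a non-normal regular separable first-countable witness $N$ of weight \emph{strictly} below $\mathfrak c$, and this is exactly where the inequality $\aleph_1<\mathfrak c$ (equivalently $\mathfrak c=\aleph_2$, which PFA guarantees) is used. Under only $\mathfrak b=\mathfrak s=\mathfrak c$ the argument would collapse when $\mathfrak c=\aleph_1$, since then ``weight $<\mathfrak c$'' forces second countability and the auxiliary space of weight $\aleph_1=\mathfrak c$ is no longer admissible; this is why item (2) is stated under the full strength of PFA rather than under $\mathfrak b=\mathfrak s=\mathfrak c$ alone.
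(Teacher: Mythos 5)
Your proposal is correct and follows essentially the same route as the paper: item (1) is quoted directly from Theorem~\ref{NZ}, and item (2) is obtained by noting that PFA gives $\w_1<\mathfrak b=\mathfrak s=\mathfrak c$, taking the topological sum of $X$ with a fixed ZFC example of a non-normal regular separable first-countable space of weight $\w_1<\mathfrak c$, and then applying Theorem~\ref{rigid}(2) to that sum. The only difference is the choice of the non-normal witness --- the paper uses the Hausdorff $(\w_1,\w_1)$-gap space of Lemma~\ref{nonnormal}, whereas you use a Mr\'owka space over a Luzin almost disjoint family --- which is an immaterial substitution, since both are ZFC constructions with the required properties.
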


The following consistency result complements Theorem~\ref{BZ}.

\begin{theorem}\label{Tychonoff}
$(\heartsuit)$ Every Tychonoff first-countable space of weight $< \mathfrak c$ can be densely embedded into a Tychonoff first-countable countably compact space.    
\end{theorem}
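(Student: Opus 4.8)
The plan is to realise the desired extension as a subspace of a Tychonoff cube, so that the Tychonoff property and the density of $X$ come for free, reserving all the real work for first-countability and countable compactness. Let $\kappa=w(X)$. Since $X$ is Tychonoff of weight $\kappa$, a family of $\kappa$ continuous functions into $[0,1]$ separating points from closed sets yields an embedding $e\colon X\to K:=[0,1]^{\kappa}$, and I identify $X$ with $e(X)$. Every $Y$ with $X\subseteq Y\subseteq K$ is then automatically Tychonoff, and $X$ is dense in $Y$ as soon as $Y\subseteq\cl_K X$. Thus it suffices to produce a first-countable countably compact $Y$ with $X\subseteq Y\subseteq\cl_K X$. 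Under $(\heartsuit)$ we have $\mathfrak b=\mathfrak s=\mathfrak c$, and since $\kappa<\mathfrak c=\mathfrak s$ the cube $K$ is sequentially compact; hence the closed set $\cl_K X$ is sequentially compact, so every sequence in it has a genuinely convergent subsequence with limit in $\cl_K X$. When $\kappa=\aleph_0$ the cube $K$ is the metrizable Hilbert cube and $Y=\cl_K X$ already works, so the content lies in the case of uncountable weight.

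For countable compactness I would arrange $Y$ to be sequentially closed in $K$: then any sequence in $Y$ has a subsequence converging in $K$, whose limit, lying in $Y$, is a cluster point, so $Y$ is countably compact. The decisive feature is that, thanks to $\kappa<\mathfrak s$, the points I adjoin are honest limits of convergent sequences rather than mere $\F$-cluster points; this is what makes countable character of the new points attainable, in contrast with a naive ultrafilter limit, whose local character in $Y$ would be the (uncountable) character of $\F$.

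The main obstacle is first-countability. Taking the full sequential closure does not preserve it (the whole cube is sequentially compact yet nowhere first-countable for uncountable $\kappa$), so the limits must be adjoined \emph{selectively}, each with a prescribed countable neighbourhood base. Here I would transplant the construction behind Theorem~\ref{BZ} into $K$: enumerate the closed discrete sequences and, guided by the tower $\{T_\xi:\xi<\mathfrak c\}$ that bases the simple $P_{\mathfrak c}$-point $\F$, adjoin for each such sequence the limit of the subsequence it indexes, declaring the tails along $\F$ to be the new neighbourhood traces. The $P_{\mathfrak c}$-point property—countable subfamilies of the tower admit pseudo-intersections in $\F$—is exactly what lets these local declarations cohere into genuinely countable bases and blocks uncontrolled accumulation of later-adjoined points at earlier ones, while $\mathfrak b=\mathfrak c$ powers the transfinite bookkeeping that must simultaneously exhaust the closed discrete sets and dominate the coordinatewise behaviour so that only countably many coordinates remain active near each adjoined point.

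It then remains to verify that $Y$ is countably compact (every closed discrete sequence is met at some stage, where a cluster point was adjoined), first-countable (the prescribed bases work, by the coherence extracted from $\F$), and Tychonoff with $X$ dense (automatic from $X\subseteq Y\subseteq\cl_K X\subseteq K$). I expect the hard part to be first-countability in the regime where $|X|$ may be as large as $2^{\kappa}>\mathfrak c$: the selective adjunction must kill every closed discrete set and yet keep every point's character countable, and it is the simultaneous balancing of $\kappa<\mathfrak s$ (to supply convergent subsequences), the $P_{\mathfrak c}$-point (to obtain coherent countable bases), and $\mathfrak b=\mathfrak c$ (to run and close the bookkeeping under all these constraints at once) that forms the crux of the argument.
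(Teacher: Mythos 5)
Your reduction to the cube gets the easy parts right: any $Y$ with $X\subseteq Y\subseteq\cl_K X$ is Tychonoff with $X$ dense, $[0,1]^{\kappa}$ is sequentially compact for $\kappa<\mathfrak s$, a sequentially closed $Y$ would be countably compact, and the case $\kappa=\aleph_0$ is trivial. You also correctly locate the crux at first-countability. But the mechanism you propose for that crux is not available inside the cube: once $Y$ is required to be a \emph{subspace} of $K$, you have no freedom to ``declare the tails along $\F$ to be the new neighbourhood traces.'' The topology at an adjoined point $y$ is inherited: since $X$ is dense in $Y$ and $Y$ is regular, the character of $y$ in $Y$ equals the character of the filter on $X$ generated by the traces $U\cap X$ of $K$-open sets $U\ni y$, i.e., by finite systems of coordinate constraints. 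This filter depends only on the embedding $e$ and the point $y$ (not on which other points you adjoin), and nothing in $(\heartsuit)$ makes it countably generated. In fact your framework can fail outright for a legitimate choice of $e$: let $X$ be discrete of size $\w_1$ and embed it in $[0,1]^{\w_1}$ by sending the $\alpha$-th point $x_\alpha$ to the point whose $\alpha$-th coordinate is $1$ and all others $0$ (these coordinate functions do separate points from closed sets). Then $\cl_K X=X\cup\{\mathbf 0\}$, every neighborhood of $\mathbf 0$ traces to a cofinite subset of $X$, and given countably many such neighborhoods one finds $x_\alpha$ in all of them, so the neighborhood $\{z:z_\alpha<1/2\}$ witnesses that $\mathbf 0$ has uncountable character. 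Hence \emph{no} space between $X$ and $\cl_K X$ is both first-countable and countably compact, and the whole plan collapses for this embedding. Choosing $e$ so that enough ``nice'' limit points exist is essentially equivalent to constructing the abstract extension first, which is the actual content of the theorem.

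This is why the paper abandons the subspace framing: it builds $Y_{\mathfrak c}$ abstractly by transfinite recursion, where neighborhood bases at new points really can be prescribed, and it pays for the Tychonoff property (no longer free) by carrying along, at every stage $\alpha$, a family $\Phi_\alpha$ of continuous $[0,1]$-valued functions attached to the basic open sets, set up at stage $0$ by Lemma~\ref{tech}. There the simple $P_{\mathfrak c}$-point is used for something your sketch has no counterpart of: it selects a single subsequence $d_\gamma$ of the given closed discrete set along which \emph{all} of the fewer than $\mathfrak c$ functions in $\Phi_\gamma$ converge, so each extends continuously to the new point (with value the limit $b^x_n$), while $\mathfrak b=\mathfrak c$ supplies the locally finite system of neighborhoods there. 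To rescue your approach you would need to prove that \emph{some} embedding of $X$ into a cube admits a sequentially closed extension that is first-countable in the subspace topology; you currently have no argument for this, and the example above shows it is genuinely a property of the embedding, not of $X$.
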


However, the following question remains open:
\begin{question}
Can the assumption $(\heartsuit)$ be weakened to $\mathfrak b=\mathfrak s=\mathfrak c$ in Theorem~\ref{Tychonoff}?    
\end{question}


Turning to embeddings into pseudocompact spaces we obtain the following characterization.
  
\begin{theorem}\label{ps}
The following assertions are equivalent:
\begin{enumerate}
    \item $\mathfrak b = \mathfrak s= \mathfrak c$.
    \item  Every first-countable zero-dimensional Hausdorff space of weight $<\mathfrak c$ embeds densely into a first-countable zero-dimensional pseudocompact space. 
    \item  Every first-countable zero-dimensional Hausdorff space of cardinality $<\mathfrak c$ embeds densely into a first-countable zero-dimensional pseudocompact space. 
\end{enumerate}  
\end{theorem}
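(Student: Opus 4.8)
The plan is to establish the cycle (1) $\Rightarrow$ (2) $\Rightarrow$ (3) $\Rightarrow$ (1), taking the embedding half from Theorem~\ref{sbc1} and reserving the real work for a non-embedding argument. For (1) $\Rightarrow$ (2): given a zero-dimensional Hausdorff first-countable space $X$ of weight $<\mathfrak{c}$, Theorem~\ref{sbc1} already embeds it densely into a zero-dimensional Hausdorff first-countable \emph{countably compact} space $Y$. Being zero-dimensional and Hausdorff, $Y$ is Tychonoff, and countable compactness gives feeble compactness; since feeble compactness and pseudocompactness coincide for Tychonoff spaces (Section~\ref{prelim}), $Y$ is already a first-countable zero-dimensional pseudocompact extension, so (2) needs no construction beyond Theorem~\ref{sbc1}. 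For (2) $\Rightarrow$ (3) I would use that every infinite first-countable space $X$ satisfies $w(X)\le|X|$ (the union of the countable local bases is a base); hence cardinality $<\mathfrak{c}$ forces weight $<\mathfrak{c}$, and (2) applies verbatim.

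The content of the theorem is the implication (3) $\Rightarrow$ (1), which I would prove contrapositively: assuming $\mathfrak{b}<\mathfrak{c}$ or $\mathfrak{s}<\mathfrak{c}$, I would exhibit a zero-dimensional first-countable Hausdorff space of cardinality $<\mathfrak{c}$ that densely embeds into \emph{no} first-countable zero-dimensional pseudocompact space. The essential difficulty is that pseudocompactness (equivalently, here, feeble compactness) is strictly weaker than countable compactness, so the non-embedding theorem underlying Theorem~\ref{sbc1} does not suffice: forbidding feebly compact extensions is a stronger demand than forbidding countably compact ones. The device that lets feeble compactness play the role of countable compactness is the following. If $\{U_n:n\in\omega\}$ is a sequence of pairwise disjoint nonempty open subsets of a first-countable feebly compact space $Y$, then this family is not locally finite, so some $y\in Y$ meets infinitely many $U_n$ in every neighborhood; first-countability at $y$ then yields a subsequence $(U_{n_k})_k$ and points $p_k\in U_{n_k}$ with $p_k\to y$. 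Disjointness and the availability of such witnessing points are inherited by any dense subspace, so a dense $X$ inside $Y$ must admit, for each such sequence of open sets, a genuinely convergent selection.

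I would then build the witness space so that its pairwise disjoint open sets encode the given combinatorial family: an unbounded $\{f_\alpha:\alpha<\mathfrak{b}\}\subseteq\omega^\omega$ when $\mathfrak{b}<\mathfrak{c}$, or a splitting family $\mathcal{S}=\{S_\alpha:\alpha<\mathfrak{s}\}$ when $\mathfrak{s}<\mathfrak{c}$ (the latter realized through the points $x_n\in 2^{\mathfrak{s}}$ with $x_n(\alpha)=1\Leftrightarrow n\in S_\alpha$, whose lack of a convergent subsequence is exactly the splitting property). The convergent selection forced above would then distill from the encoding a single object of the forbidden kind — a function dominating the whole unbounded family, respectively an infinite set split by no $S_\alpha$ — contradicting $\mathfrak{b}<\mathfrak{c}$, respectively $\mathfrak{s}<\mathfrak{c}$. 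The main obstacle I anticipate is topological rather than combinatorial: since $2^{\mathfrak{s}}$ and $\omega^{\mathfrak{b}}$ are not first-countable, one cannot simply take a subspace of them, and I would instead have to design a bona fide zero-dimensional \emph{first-countable} space of cardinality $<\mathfrak{c}$ whose convergence structure still records enough of the coordinates for the feeble-compactness selection to reproduce domination or unsplittability. Making this encoding robust under passage to an \emph{arbitrary} dense feebly compact extension — where the ambient coordinate functions need not extend — is precisely the heart of the Section~5 non-embedding theorem that this proof would invoke.
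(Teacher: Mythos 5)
Your implications (1) $\Rightarrow$ (2) $\Rightarrow$ (3) are correct. In particular your shortcut for (1) $\Rightarrow$ (2) --- cite Theorem~\ref{sbc1} and observe that a zero-dimensional Hausdorff countably compact space is Tychonoff and feebly compact, hence pseudocompact --- is valid and differs from the paper, which instead proves a standalone ZFC statement (Proposition~\ref{pseudocompact}: weight $<\min\{\mathfrak s,\mathfrak b\}$ suffices) via Bell's exact-refinement machinery; under hypothesis (1) the two routes give the same conclusion.

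The genuine gap is (3) $\Rightarrow$ (1): you correctly isolate it as the content of the theorem, but you do not prove it --- you sketch how witness spaces \emph{should} encode an unbounded family (if $\mathfrak b<\mathfrak c$) or a splitting family (if $\mathfrak s<\mathfrak c$), and then explicitly defer the construction to ``the Section~5 non-embedding theorem,'' i.e.\ to exactly what has to be proved. Moreover, for the $\mathfrak b$ half the direction you indicate cannot work as stated. The paper's $\mathfrak b$-witness in the countably compact setting (Proposition~\ref{b}) is precisely a direct encoding of an unbounded family --- a Mrowka space $\psi(\mathcal A)$ over an almost disjoint family built from an unbounded $\leq^*$-increasing family --- but no space of this kind can witness the pseudocompact version: extending $\mathcal A$ to a MAD family $\mathcal A'$, the space $\psi(\mathcal A)$ sits densely inside $\psi(\mathcal A')$, which is first-countable, zero-dimensional and pseudocompact. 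This is exactly why the paper replaces the Mrowka argument by something structurally different: by \cite[Corollary 8.51]{Buk} there is a subspace $A$ of the Cantor set of cardinality $\mathfrak b$ which is not a $\lambda$-set, and Proposition~\ref{bpseudo} shows, by a tree argument in the spirit of Bell, that its Pixley--Roy hyperspace $\PR(A)$ --- a Tychonoff zero-dimensional first-countable space of cardinality $\mathfrak b$ --- embeds densely into no regular first-countable feebly compact space; the contradiction there comes from a $G_\delta$-set argument, not from distilling a dominating function. For the $\mathfrak s$ half your intuition (splitting kills convergent subsequences, then diagonalize along a pseudointersection) is the right one, but the construction is again missing: the paper uses the van Douwen--Przymusi\'nski-type space of cardinality $\mathfrak s$ from Proposition~\ref{De}, where Claim~\ref{crucial} plays exactly this role, together with the observation that this Tychonoff space, having cardinality $<\mathfrak c$, is automatically zero-dimensional, so that assumption (3) applies to it. Without these two concrete non-embedding constructions, your (3) $\Rightarrow$ (1) is a plan rather than a proof.
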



Recall that a subspace $A$ of the Cantor space is called a {\em $\lambda$-set}  if each countable subset of $A$ is $G_{\delta}$.
As a by product we obtain the following characterization of $\lambda$-subsets of the Cantor space.

\begin{theorem}\label{PR}
A subspace $X$ of the Cantor space is a $\lambda$-set if and only if the Pixley-Roy hyperspace $\PR(X)$ embeds densely into a first-countable pseudocompact space.      
\end{theorem}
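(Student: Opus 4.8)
The plan is to prove both implications through the feeble compactness reformulation of pseudocompactness, exploiting the explicit Pixley--Roy combinatorics. First I record the standing reductions: since $X\subseteq 2^{\w}$ is separable, metrizable and zero-dimensional, $\PR(X)$ is a zero-dimensional Hausdorff (hence Tychonoff), first-countable space, so for any candidate extension $Z$ I may assume $Z$ is Tychonoff and identify its pseudocompactness with feeble compactness. It is worth stressing at the outset that $\PR(X)$ itself is essentially never feebly compact: whenever $(x_n)$ converges in $2^{\w}$ to a point $p\notin X$, the sets $[\{x_n\},U_n]$, with $U_n$ a tiny clopen neighbourhood of $x_n$, form a sequence of nonempty open sets with \emph{no} cluster point in $\PR(X)$, because any cluster point $A\in[X]^{<\w}$ would force some point of $A$ to be a subsequential limit of $(x_n)$ lying in $X$, which does not exist. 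Thus the content of the theorem lies entirely in adjoining \emph{new} limit points while keeping the extension first-countable; without that requirement the statement is vacuous, every Tychonoff space being dense in its compact Stone--\v Cech compactification.

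For the implication ``$\lambda$-set $\Rightarrow$ dense embedding'' I would construct $Z$ by adjoining to $\PR(X)$ one ``phantom'' point $z_\Phi$ for each nonempty $\Phi\in[\cl X]^{<\w}$, meant to cluster every sequence of finite subsets of $X$ that spreads toward the configuration $\Phi$. Fixing decreasing clopen neighbourhoods $U_{\Phi,k}$ of $\Phi$ in $2^{\w}$ with intersection $\Phi$, I let a neighbourhood base at $z_\Phi$ have traces on $\PR(X)$ of the form $\{B\in[X]^{<\w}:B\subseteq U_{\Phi,k}\cap X\text{ and }B\cap U_{p,k}\neq\emptyset\text{ for every }p\in\Phi\}$, the last clause forcing the approximating sets to ``spread over'' $\Phi$ and thereby separating $z_\Phi$ from $z_\Psi$ when $\Phi\neq\Psi$. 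Then $\PR(X)$ is dense, each $z_\Phi$ has countable character, and feeble compactness follows since any sequence of nonempty basic sets $[F_n,U_n]$, after thinning so that $|F_n|$ is constant and the listed coordinates converge in $2^{\w}$ to some $\Phi$, has $z_\Phi$ as a cluster point ($F_n$ itself eventually lies in every basic neighbourhood of $z_\Phi$ and in $[F_n,U_n]$).

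The delicate point---and exactly where the $\lambda$-set hypothesis is indispensable---is showing that $Z$ is completely regular while remaining first-countable. The tension is intrinsic: since $\PR(X)$ must be a \emph{subspace}, each trace above has to be Pixley--Roy open and therefore must contain supersets of its members; consequently the closure of a neighbourhood of $z_\Phi$ threatens to absorb the ``larger'' phantom points $z_{\Phi\cup\{q\}}$ for the (typically uncountably many) directions $q\in\cl X$ near $\Phi$. To carve these off by \emph{clopen} sets, so as to obtain a countable clopen neighbourhood base (hence zero-dimensionality, hence the Tychonoff property needed for pseudocompactness), one must separate the pertinent countable subsets of $X$ from the rest of $X$ by a countable descending family of clopen sets---precisely the demand that these countable sets be $G_\delta$ in $X$, i.e. that $X$ be a $\lambda$-set. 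That the construction cannot be salvaged otherwise is already visible in the fact that $2^{\w}$, not being a $\lambda$-set, must fail, so no uniform ``add all $z_\Phi$'' recipe can produce a first-countable pseudocompact extension of $\PR(2^{\w})$.

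For the converse I argue by contraposition. Assume a countable $C=\{c_k:k\in\w\}\subseteq X$ is not $G_\delta$ in $X$, and suppose $\PR(X)$ is dense in a first-countable pseudocompact (hence feebly compact) $Z$. With $C_n=\{c_0,\dots,c_n\}$, the sets $V_n=[C_n,X]$ of all finite supersets of $C_n$ are nonempty open; extending them to $Z$ and using feeble compactness yields a cluster point $z$, which cannot lie in $\PR(X)$ since no finite set can cluster $(V_n)$ (that would force the infinite $C$ into a fixed finite set). Fixing a decreasing countable base $(O_m)_m$ at $z$ and letting $G_m$ be the union of those clopen $U\subseteq X$ admitting $F$ with $[F,U]\subseteq O_m$ and $C_n\subseteq U$ for some $n\geq m$, one checks that the $G_m$ are open, decreasing, and contain $C$. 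The technical heart is $\bigcap_m G_m=C$, which would exhibit $C$ as $G_\delta$ and contradict the hypothesis; this uses the regularity of $Z$ to separate $z$ from any hypothetical $x^{*}\in(\bigcap_m G_m)\setminus C$ and to translate that separation, through the Pixley--Roy description of neighbourhoods, into a stage $m$ with $x^{*}\notin G_m$. I expect the main obstacle to be the forward construction, namely reconciling first-countability with complete regularity of the phantom extension: the new points must have \emph{clopen yet countable} neighbourhood bases, and it is exactly this reconciliation that both requires and is furnished by the $G_\delta$-ness of the countable subsets of $X$; the converse, by contrast, is comparatively routine once $([C_n,X])_n$ is recognised as the right witness, the sole subtlety being that the countable character at $z$ pins its ``support'' down to $C$ rather than to a strictly larger $G_\delta$ set.
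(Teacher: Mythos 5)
Your proposal does not establish either implication; both halves stop exactly where the real work begins, and the paper's proof (forward: citation of Bell's Theorem~4.2, i.e.\ the exact-sequence machinery behind Proposition~\ref{Bell}; converse: the tree argument of Proposition~\ref{bpseudo}) is structured the way it is precisely to get past the points you skip.

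For the forward direction, your phantom-point extension is flawed already at the level of separation axioms: since $[X]^{<\w}\subseteq[\cl X]^{<\w}$, for $\Phi\subseteq X$ the \emph{real} point $\Phi\in\PR(X)$ lies in every basic neighbourhood of $z_\Phi$ (take $B=\Phi$: it is contained in $U_{\Phi,k}\cap X$ and meets each $U_{p,k}$), so $z_\Phi$ and $\Phi$ cannot be $T_1$-separated and $Z$ is not even Hausdorff, let alone Tychonoff (which your own reduction to feeble compactness requires). Nor can you discard the configurations $\Phi\subseteq X$: if $x,y\in X$ are distinct, $x_n\to x$, $y_n\to y$ with $x_n,y_n\notin\{x,y\}$, and $U_n$ are pairwise disjoint tiny clopen neighbourhoods of $F_n=\{x_n,y_n\}$ avoiding $x,y$, then $([F_n,U_n])_n$ has no cluster point in $\PR(X)$ and no phantom point over any $\Phi\not\subseteq X$ clusters it either, so feeble compactness forces a new point over $\{x,y\}\subseteq X$. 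Beyond this, the step you yourself identify as decisive --- that the extension is regular (zero-dimensional) and first countable, which is where the $\lambda$-set hypothesis must actually be used --- is asserted, never carried out. Bell's construction exists to do exactly that work: the added points are indexed by a maximal eventually disjoint family of $\B$-exact locally finite sequences, and $\lambda$-set-ness is what yields exact refinements.

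For the converse, the claim you call the technical heart, $\bigcap_m G_m=C$, is unjustified, and the justification you sketch does not work: regularity lets you separate $z$ from the \emph{point} $\{x^*\}\in\PR(X)\subseteq Z$, but that is compatible with every $O_m$ containing basic sets $[F,U]$ with $x^*\in U\supseteq C_n$, which is all that membership of $x^*$ in $G_m$ requires. Nothing rules out that the cluster point $z$ handed to you by feeble compactness ``lives over'' $C\cup\{x^*\}$ (its neighbourhood traces consisting of finite supersets of $C_n\cup\{x^*\}$ inside small neighbourhoods of $C\cup\{x^*\}$), in which case $x^*\in\bigcap_m G_m$; a priori the intersection could even be uncountable, and then no contradiction follows at all (were it countable, the paper's Lemma~\ref{trivial} would finish, but a single cluster point gives no control on this). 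The obvious repair also fails: choosing $p_\infty\in\bigcap_m G_m$ off the countable set of finite parts and imitating the paper's endgame, you would need $z\in\cl_Z([\{p_\infty\},V])$ for arbitrarily small neighbourhoods $V$ of $p_\infty$, but your only witnesses in $O_m\cap[\{p_\infty\},X]$ are sets like $F_m\cup\{p_\infty\}$, which are not contained in $V$. This is exactly why Proposition~\ref{bpseudo} builds an infinite tree of clopen sets $B_s$ of shrinking diameter covering $Q$, takes at each node a cluster point $z_s$ of the \emph{localized} sets $[F^s_k,B_s]$ (so that the finite parts $G^s_k$ of its basic neighbourhoods stay inside $B_s$), and imposes the nesting condition (5.1) so that a point deep along a branch lies in infinitely many $W^s_k$ of each ancestor; only with that structure does the regularity argument at $\{p_\infty\}$, via $R=\Int_Z(\cl_Z([\{p_\infty\},X]))$, produce the contradiction. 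Your one-application-of-feeble-compactness argument is the natural first attempt, but it does not close.
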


\section{Embedding theorems}

We start with embeddings into countably compact spaces.

As we mentioned in the previous section, the assumption $\mathfrak{s}=\mathfrak{b}=\mathfrak c$ is formally weaker than ($\heartsuit$). Indeed, fix a $P_{\mathfrak c}$-point $p$ and a splitting family $\mathcal C$. To derive a contradiction, assume that $|\mathcal C|<\mathfrak c$. Consider the family $\mathcal D=\mathcal C\cap p$ and find an arbitrary pseudointersection $E\in p$ of the family $\mathcal D\cup\{\omega\setminus C: C\in \mathcal C\setminus p\}$. It is easy to check that there exists no $C\in\mathcal C$ which splits $E$. This contradiction implies that $\mathfrak s=\mathfrak c$. On the other hand there exists a model of ZFC which satisfies $\mathfrak{s}=\mathfrak b=\mathfrak c$, but contains no $P_{\mathfrak c}$-points, see~\cite[Theorem 8]{BS}.
Hence the next theorem generalizes Theorem~\ref{BZ}.

\begin{proposition}\label{mainr}
$(\mathfrak{s} =\mathfrak {b}=\mathfrak c)$ Let $X$ be a regular first-countable space of weight $\kappa<\mathfrak{c}$. Then $X$ can be densely embedded into a regular first-countable countably compact space.
\end{proposition}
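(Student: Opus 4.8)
The plan is to build the countably compact extension by a transfinite recursion that kills off all candidate closed discrete sequences, using the hypotheses $\mathfrak b=\mathfrak c$ and $\mathfrak s=\mathfrak c$ at the two points where they are needed. The underlying set of the extension $Y$ will be $X\sqcup D$, where $D$ is a discrete set of new points of size at most $\mathfrak c$ serving as limits of previously un-convergent $\omega$-sequences. Since $X$ has weight $\kappa<\mathfrak c$, it has at most $\kappa^{\aleph_0}\le\mathfrak c$ convergent-free countable subsets to handle; more precisely, I would first enumerate as $\langle S_\alpha:\alpha<\mathfrak c\rangle$ all the ``infinite closed discrete sequences that might still need a limit'' in the various stages, and process them one at a time. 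At a typical stage $\alpha$ one has built a regular first-countable space $Y_\alpha$ of size $<\mathfrak c$ with $X$ dense in it, and one is given an infinite closed discrete $S_\alpha\subseteq Y_\alpha$ for which a limit point must be adjoined.

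The key local step is the following: given a regular first-countable space $Z$ of weight $<\mathfrak c$ and a countable closed discrete $S=\{z_n:n\in\omega\}\subseteq Z$, adjoin a single new point $\infty$ whose neighborhood filter is generated by sets of the form $\{\infty\}\cup\{z_n:n\in A\}$ together with tails, where $A$ ranges over the members of a suitably chosen filter on $\omega$, arranged so that $\infty$ does become a genuine limit of $S$ while preserving regularity and first-countability. To keep first-countability at $\infty$ I need a \emph{countable} neighborhood base there, i.e. the filter picked on $\omega$ must be generated by a decreasing $\omega$-tower $A_0\supseteq A_1\supseteq\cdots$. Here is where $\mathfrak s=\mathfrak c$ enters: to guarantee that the construction is not derailed by the $<\mathfrak c$ many countable subsets of $Z$ that have already been ``promised'' limits, I must choose the tower so that its intersection pattern splits, or avoids being split by, the relevant traces; concretely, because fewer than $\mathfrak c=\mathfrak s$ many sets cannot form a splitting family, I can find an infinite $B\subseteq\omega$ unsplit by all the previously relevant sets and run the new neighborhood filter through $B$, so that no earlier closed discrete set is resurrected and no unwanted convergence is created.

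The role of $\mathfrak b=\mathfrak c$ is to control first-countability \emph{at the old points} of $Z$ simultaneously across the whole recursion, and to ensure that after all $\mathfrak c$ stages the limit space is still first-countable and countably compact. The standard mechanism is a bookkeeping of the ``escape functions'': each point $x\in X$ with its fixed countable base interacts with the countably many traces $S_\alpha$ through it, producing a function in $\omega^\omega$ that records how fast neighborhoods must shrink; since $\mathfrak b=\mathfrak c$, any family of fewer than $\mathfrak c$ such functions is bounded, so one can uniformly dominate all the demands placed on $x$ before stage $\mathfrak c$ and fit them into a single decreasing $\omega$-chain of neighborhoods. This is precisely the device that makes the union $Y=\bigcup_{\alpha<\mathfrak c}Y_\alpha$ first-countable rather than merely having character $\mathfrak c$, and it is the same idea underlying the proof of Theorem~\ref{BZ} under $(\heartsuit)$, where the simple $P_{\mathfrak c}$-point supplied both the tower and the domination for free.

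I expect the main obstacle to be the interaction of the two cardinal hypotheses in a single recursion: replacing the simple $P_{\mathfrak c}$-point of the $(\heartsuit)$ proof by the pair $(\mathfrak s,\mathfrak b)=(\mathfrak c,\mathfrak c)$ means the tower that gives first-countability at a new point $\infty$ is no longer automatically an ultrafilter base, so one must verify by hand that each stage preserves regularity (the new point can be separated from closed sets not containing it) and, crucially, that countable compactness is actually achieved in the limit — i.e. that every infinite closed discrete subset of the final $Y$ already got a limit adjoined at some stage $<\mathfrak c$. The delicate point is the closing-off argument: a closed discrete set in $Y$ may be ``new'' in the sense that it is not literally any single $S_\alpha$, so the enumeration must be a genuine $\Diamond$-style or reflection-style bookkeeping that catches every eventual closed discrete sequence, using that $Y$ has size $\mathfrak c$ and that any such sequence, being countable, is decided by a bounded initial segment of the recursion. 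Verifying that $\mathfrak s=\mathfrak c$ suffices to keep the splitting-avoidance consistent across all $\mathfrak c$ stages — rather than failing at some stage where too many constraints accumulate — is the heart of the matter, and I would isolate it as the single technical lemma on which the whole transfinite construction rests.
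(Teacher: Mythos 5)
Your global skeleton --- a length-$\mathfrak c$ recursion with bookkeeping, non-splitting from $\mathfrak s=\mathfrak c$ to stabilize a subsequence, and domination from $\mathfrak b=\mathfrak c$ --- is the same as the paper's, but your key local step, as stated, does not work, and it is precisely there that the two hypotheses must do their job. You propose to give the new point $\infty$ a neighborhood filter generated by sets of the form $\{\infty\}\cup\{z_n:n\in A\}$ (plus tails). Since the points $z_n$ are in general not isolated, no such set can be open: an open set containing $z_n$ must contain a whole neighborhood of $z_n$ in the old space, or else the old space is no longer a subspace of the new one. So the basic neighborhoods of the new point $\gamma$ are forced to be ``tubes'' $\{\gamma\}\cup\bigcup_{n\in A\setminus k}U^{z_n}_{f(n)+k}$ around a subsequence, and the entire difficulty is choosing $A$ and $f$ so that these tubes are open, locally finite, and compatible with the fewer than $\mathfrak c$ basic sets already constructed. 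This is where the paper spends both hypotheses: $\mathfrak s=\mathfrak c$ yields an infinite $A$ such that $d=\{z_n:n\in A\}$ is, for every basic set $U^x_n$, either almost contained in or almost disjoint from the annulus $U^x_n\setminus U^x_{n+1}$, which pins down a unique level $m(x)$ with $d\subset^* U^x_{m(x)}\setminus U^x_{m(x)+1}$; then $\mathfrak b=\mathfrak c$ yields a single $f$ dominating the fewer than $\mathfrak c$ functions $f_{U^x_{m(x)}}(n)=\min\{k: U^{z_n}_k\subset U^x_{m(x)}\setminus\operatorname{cl}(U^x_{m(x)+2})\}$, which makes the tubes avoid the right closures, keeps $\{U^{z_n}_{f(n)}:n\in A\}$ locally finite, and thereby preserves regularity and gives a countable \emph{open} base at $\gamma$. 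Your proposal instead assigns $\mathfrak b$ to first-countability at the old points; in the actual construction that is free: at each stage an old point's base changes only by adding the new point to finitely many of its members, and the coherence condition $U^x_{n,\alpha}=U^x_{n,\beta}\cap X_\alpha$ makes the $n$-indexed unions at limit stages again a countable base, with no cardinal hypothesis at all.

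Second, the closing-off problem you flag as ``the heart of the matter'' and leave unresolved (a closed discrete set in $Y$ may not be any single $S_\alpha$, so you call for $\Diamond$-style or reflection-style bookkeeping) has a much simpler solution, and without it your enumeration $\langle S_\alpha:\alpha<\mathfrak c\rangle$ is not even well defined, since the sets needing limits appear only as the recursion unfolds. Take the new points to be ordinals, so that the underlying set of the final space is contained in $X\cup\mathfrak c$, and fix in advance a bijection $h:\mathcal D\cup[\mathfrak c]^{\omega}\to\mathfrak c$, where $\mathcal D$ is the family of countable closed discrete subsets of $X$, such that $h(a)\ge\sup(a)$ for all $a\in[\mathfrak c]^{\omega}$; at stage $\gamma+1$ one treats $h^{-1}(\gamma)$ if it is (still) an infinite closed discrete subset of $Y_\gamma$. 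Then any countable $A$ in the final space either meets $\mathfrak c$ in an infinite set $B\in[\mathfrak c]^{\omega}$ --- a set enumerated in advance and handled at stage $h(B)+1$ --- or satisfies $A\subset^* X$, in which case its trace on $X$, if closed discrete, lies in $\mathcal D$ and was likewise handled. No diamond or reflection device is needed; this elementary bookkeeping is exactly what makes the recursion close off.
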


\begin{proof}
Without loss of generality we can assume that the underlying set of $X$ is disjoint with $\mathfrak c$.  The first countability of $X$ implies that $|X|\leq \mathfrak c^{\w}=\mathfrak c$. If $X$ is countably compact, then there is nothing to prove. Otherwise, let 
$$\mathcal{D}=\{A\in [X]^{\omega}: \hbox{ } A \hbox{ is closed and discrete in }X \}.$$ Fix any bijection $h: \mathcal{D}\cup [\mathfrak{c}]^{\omega}\rightarrow \mathfrak{c}$ such that $h(a)\geq \sup(a)$ for any $a\in [\mathfrak{c}]^{\omega}$. It is easy to see that such a bijection exists. Next, for every $\alpha\leq \mathfrak{c}$ we shall recursively construct a topology $\tau_{\alpha}$ on $X_{\alpha}\subseteq X\cup\alpha$. For the sake of brevity we denote the space $(X_{\alpha},\tau_{\alpha})$ by $Y_{\alpha}$. At the end, we will show that the space $Y_{\mathfrak{c}}$ is regular first-countable countably compact and contains  $X$ as a dense subspace.

Let $X_0=X$. Since $X$ is first-countable and regular there exists a base $\mathcal B_0=\bigcup_{x\in X}B_{0}^x$ of the topology on $X$, where for each $x\in X$, the collection $\mathcal B_{0}^x=\{U_{n,0}^x:n\in \w\}$ is an open neighborhood base at $x$. With no loss of generality we can assume that $|\mathcal B_{0}|<\mathfrak c$; $U_{0,0}^x=X$ for each $x\in X$; and $\overline{U_{n+1,0}^x}\subset U_{n,0}^x$ for every $n\in\w$ and $x\in X$.

Assume that for each $\alpha<\xi$ regular first-countable spaces $Y_{\alpha}$ are already constructed by defining a base $\mathcal B_{\alpha}=\bigcup_{x\in X_{\alpha}}\mathcal B_{\alpha}^x$ of the the topology $\tau_{\alpha}$, where for each $x\in X_{\alpha}$, the collection $\mathcal B_{\alpha}^x=\{U_{n,\alpha}^x:n\in \w\}$ is an open neighborhood base at $x$. Additionally assume that $X_{\alpha}\subseteq X_{\beta}$ for any $\alpha\in\beta$ and the family $\mathcal B_{\alpha}$ satisfies the following conditions:
\begin{enumerate}
    \item $|\mathcal B_{\alpha}|<\mathfrak c$;
    \item $U_{0,\alpha}^x=X_{\alpha}$ for each $x\in X_{\alpha}$;
    \item $\cl_{Y_{\alpha}}(U_{n+1,\alpha}^x)\subset U_{n,\alpha}^x$ for every $n\in\w$ and $x\in X_{\alpha}$;
    \item for every $\alpha<\beta<\xi$, $n\in\w$ and $x\in X_{\alpha}$ we have that  $U_{n,\alpha}^x=U_{n,\beta}^x\cap X_\alpha$.
\end{enumerate}



There are three cases to consider:
\begin{itemize}
\item[1)] $\xi=\gamma+1$ for some $\gamma\in\mathfrak{c}$ and $h^{-1}(\gamma)\cap X_{\gamma}$ is not an infinite closed discrete subset of $Y_{\gamma}$;
\item[2)] $\xi=\gamma+1$ for some $\gamma\in\mathfrak{c}$ and $h^{-1}(\gamma)\cap X_{\gamma}$ is an infinite closed discrete subset of $Y_{\gamma}$;
\item[3)] $\xi$ is a limit ordinal.
\end{itemize}

\medskip

1) Let $X_{\xi}=X_{\gamma}$ and $\mathcal B_{\xi}=\mathcal B_{\gamma}$.

\medskip 

2) Put $X_{\xi}=X_{\gamma}\cup\{\gamma\}$. Let $h^{-1}(\gamma)=\{z_n\}_{n\in\omega}$. For each $U_{n,\gamma}^x\in\B_{\gamma}$ consider the set $$A_{U_{n,\gamma}^x}=\{k\in\w: z_k\in U_{n,\gamma}^x\setminus U_{n+1,\gamma}^x\}.$$ Since $|\B_{\gamma}|<\mathfrak c=\mathfrak s$ we get that the family $\{A_{U_{n,\gamma}^x}:U_{n,\gamma}^x\in\B_{\gamma}\}$ is not splitting, i.e. there exists a subset $A\subset \w$ such that for each $U_{n,\gamma}^x\in\B_{\gamma}$ the set 
$d_{\gamma}=\{z_n:n\in A\}$ is either almost contained in $U_{n,\gamma}^x\setminus U_{n+1,\gamma}^x$ or almost disjoint with $U_{n,\gamma}^x\setminus U_{n+1,\gamma}^x$. Since the set $\{z_n:n\in\w\}$ is closed in $Y_{\gamma}$ for each $x\in X_{\gamma}$ there exists $n\in\w$ such that $|U_{n,\gamma}^x\cap \{z_k:k\in\w\}|\leq 1$. Since $U^x_{0,\gamma}=X_{\gamma}$ for all $x\in X_{\gamma}$, we obtain that for each $x\in X_{\gamma}$ there exists a unique $m(x)\in\w$ such that $d_{\gamma}\subset^* U_{m(x),\gamma}^x\setminus U_{m(x)+1,\gamma}^x$.

For each $x\in X_\gamma $ let $\mathcal E(x)$ be the pair 
$(U^x_{m(x),\gamma}, U^x_{m(x)+2,\gamma})\in\B_\gamma{\times} \B_\gamma$.
Define a function $f_{\mathcal E(x)}\in\w^A$ as follows: if $z_n\in U^x_{m(x),\gamma}\setminus \cl_{Y_{\gamma}}(U^x_{m(x)+2,\gamma})$, then
$$f_{\mathcal E(x)}(n)=\min\{k: U^{z_n}_{k,\gamma}\subset U^x_{m(x),\gamma}\setminus \cl_{Y_{\gamma}}(U^x_{m(x)+2,\gamma})\},$$  and $f_{\mathcal E(x)}(n)=0$, otherwise. 
Since $|\B_{\gamma}|<\mathfrak c=\mathfrak b$ there exists a function $f\in\w^A$ such that $f\geq^* f_{\mathcal E(x)}$ for every $U^x_{m(x),\gamma}\in \B_{\gamma}$.
With no loss of generality we can additionally assume that $U_{f(n),\gamma}^{z_n}\cap U_{f(m),\gamma}^{z_m}=\emptyset$ for each distinct $n,m\in A$. It is easy to see that the sequence $\{U_{f(n),\gamma}^{z_n}:n\in A\}$ is locally finite in $Y_{\gamma}$.

Next we define an open neighborhood base at the point $\gamma\in X_{\xi}$: Put 
$$U_{0,\xi}^{\gamma}=X_{\xi}\quad  \hbox{ and }\quad U_{k,\xi}^{\gamma}=\bigcup_{n\in A\setminus k}U^{z_n}_{f(n)+k,\gamma}\cup\{\gamma\}$$ for all $k\in \N$. 

Now we are going to define an open neighborhood base $\mathcal B_{\xi}^x$ at each point $x\in X_{\gamma}$. Let $U_{0,\xi}^x=X_{\xi}$. 
For each $n\in\N$ let $U_{n,\xi}^x=U_{n, \gamma}^x$ if $n\geq m(x)+1$ and $U_{n,\xi}^x=U_{n,\gamma}^x\cup\{\gamma\}$ if $n\leq m(x)$. 
It is easy to check that the family $\mathcal B_{\xi}=\{U_{n,\xi}^x: x\in X_{\xi},n\in\w\}$ forms a base of a topology $\tau_{\xi}$, and for each $x\in X_{\xi}$ the family $\mathcal B_{\xi}^{x}=\{U_{n,\xi}^x:n\in\w\}$ forms an open neighborhood base at $x$ in $Y_{\xi}$. 


At this point it is a tedious routine to check that the family $\mathcal B_{\xi}$  satisfies conditions (1)--(4).

\medskip

3) Let $X_{\xi}=\bigcup_{\alpha\in \xi}X_{\alpha}$. For each $x\in X_{\xi}$ let $\theta_x=\min\{\alpha:x\in X_{\alpha}\}$, and for every $n\in\w$ let us put $$U_{n,\xi}^x=\bigcup_{\theta_x\leq\alpha<\xi}U_{n,\alpha}^x.$$
It is quite straightforward to check that the family $\mathcal B_{\xi}=\{U_{n,\xi}^x: n\in\w, x\in X_{\xi}\}$ forms a base of a topology $\tau_{\xi}$, and the family $\mathcal B_{\xi}$ satisfies conditions (1), (2) and (4). In order to check the validity of condition (3) we need the following auxiliary claim.

\begin{claim}\label{clr}
For any $\gamma\in\xi$, $n,m\in\w$ and distinct points $y_0, y_1\in Y_{\gamma}$, if $U^{y_0}_{n,\gamma}\cap U^{y_1}_{m,\gamma}=\emptyset$, then $U^{y_0}_{n,\xi}\cap U^{y_1}_{m,\xi}=\emptyset$.
\end{claim}

\begin{proof} Seeking a contradiction, assume that $U^{y_0}_{n,\xi}\cap U^{y_1}_{m,\xi}\neq\emptyset$, but $U^{y_0}_{n,\gamma}\cap U^{y_1}_{m,\gamma}=\emptyset$ for some $\gamma\in \xi$. It is easy to see that $U^{y_0}_{n,\xi}\cap U^{y_1}_{m,\xi}\subset\xi\setminus \gamma$. Let $\delta=\min U^{y_0}_{n,\xi}\cap U^{y_1}_{m,\xi}$. It follows that $U^{y_0}_{n,\delta}\cap U^{y_1}_{m,\delta}=\emptyset$ and $U^{y_0}_{n,\delta+1}\cap U^{y_1}_{m,\delta+1}=\{\delta\}$. Then the set $d_{\delta}$ (see case 2 above) is closed and discrete in $Y_{\delta}$. Since $\delta\in U^{y_0}_{n,\delta+1}\cap U^{y_1}_{m,\delta+1}$, the definition of $U^{y_0}_{n,\delta+1}$ and $U^{y_1}_{m,\delta+1}$ implies that $d_{\delta}\subset^*U^{y_0}_{n,\delta}\cap U^{y_1}_{m,\delta}=\emptyset$, which contradicts our assumption.
\end{proof}

Fix any $x\in X_{\xi}$, $n\in \w$ and $z\in \overline{U_{n+1,\xi}^x}$. There exists an ordinal $\gamma\in \xi$ such that $x,z\in X_{\gamma}$. We claim that $z\in\cl_{Y_{\gamma}}(U_{n+1,\gamma}^x)$. Indeed, otherwise, there exists $m\in\w$ such that $U^{z}_{m,\gamma}\cap U_{n+1,\gamma}^x=\emptyset$. The above claim implies that $U^{z}_{m,\xi}\cap U_{n+1,\xi}^x=\emptyset$, which contradicts the choice of $z$. Thus $z\in \cl_{Y_{\gamma}}(U_{n+1,\gamma}^x)\subset U_{n,\gamma}^x\subset U^x_{n,\xi}$, which establishes condition (3).

By the construction, the space $Y_{\mathfrak{c}}$ is regular, first-countable and contains $X$ as a dense subspace. Let $A$ be any countable subset of $Y_{\mathfrak{c}}$. If the set $B=A\cap \mathfrak{c}$ is infinite, then consider $h(B)\in\mathfrak{c}$. By the construction, either $B$ has an accumulation point in $Y_{h(B)}$ or $h(B)$ is an accumulation point of $B$ in $Y_{h(B)+1}$. In both cases $B$ has an accumulation point in $Y_{\mathfrak c}$. If $A\subset^* X$, then either it has an accumulation point in $X$, or $A$ is closed and discrete in $X$. In the latter case either $A$ has an accumulation point in $Y_{h(A)}$ or $h(A)$ is an accumulation point of $A$ in $Y_{h(A)+1}$. Thus $Y_{\mathfrak c}$ is countably compact. 
\end{proof}

\begin{proposition}\label{mainz}
$(\mathfrak{s} =\mathfrak{b}=\mathfrak c)$ Let $X$ be a Hausdorff zero-dimensional first-countable space of weight $\kappa<\mathfrak{c}$. Then $X$ embeds densely into a Hausdorff zero-dimensional first-countable countably compact space.
\end{proposition}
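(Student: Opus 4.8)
The plan is to run the \emph{same} transfinite recursion as in the proof of Proposition~\ref{mainr}, replacing the regularity condition (3), namely $\cl_{Y_\alpha}(U^x_{n+1,\alpha})\subset U^x_{n,\alpha}$, by the zero-dimensionality condition (3$'$) that each $U^x_{n,\alpha}$ be clopen in $Y_\alpha$, while keeping conditions (1), (2) and (4) verbatim. For the base step I would use that $X$ is Hausdorff, zero-dimensional and first-countable to fix, for each $x\in X$, a decreasing \emph{clopen} neighborhood base $\{U^x_{n,0}:n\in\w\}$ with $U^x_{0,0}=X$; since $w(X)=\kappa<\mathfrak c$ this can be arranged with $|\mathcal B_0|<\mathfrak c$. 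I then fix the same bijection $h\colon\mathcal D\cup[\mathfrak c]^\w\to\mathfrak c$ with $h(a)\ge\sup a$ and split the recursion into the three cases of Proposition~\ref{mainr}.

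The heart of the argument is the active successor step (case 2). Writing $h^{-1}(\gamma)=\{z_n:n\in\w\}$, the splitting argument is unchanged: as $|\mathcal B_\gamma|<\mathfrak c=\mathfrak s$, the family $\{A_{U^x_{n,\gamma}}\}$ is not splitting, yielding $A\in[\w]^\w$ and, for each $x\in X_\gamma$, a unique $m(x)$ with $d_\gamma\subset^* U^x_{m(x),\gamma}\setminus U^x_{m(x)+1,\gamma}$. The only modification is that the annulus $U^x_{m(x),\gamma}\setminus\cl_{Y_\gamma}(U^x_{m(x)+2,\gamma})$ is replaced by the \emph{clopen} annulus $U^x_{m(x),\gamma}\setminus U^x_{m(x)+2,\gamma}$, into which the functions $f_{U^x_{m(x),\gamma}}$ are defined exactly as before; using $\mathfrak b=\mathfrak c$ and $|\mathcal B_\gamma|<\mathfrak c$ I dominate all of them by a single $f\in\w^A$, then thin so that $\{U^{z_n}_{f(n),\gamma}:n\in A\}$ is pairwise disjoint and locally finite in $Y_\gamma$, and define $U^\gamma_{k,\xi}$ and $U^x_{n,\xi}$ by the same formulas.

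What I must verify in place of (3) is (3$'$): clopenness in $Y_\xi$. For $U^\gamma_{k,\xi}$ this holds because $V\defeq\bigcup_{n\in A\setminus k}U^{z_n}_{f(n)+k,\gamma}$ is a locally finite union of clopen sets, hence clopen in $Y_\gamma$, and $\cl_{Y_\xi}(V)=V\cup\{\gamma\}$ since the only new point $\gamma$ is an accumulation point of the cofinite piece $\{z_n:n\in A\setminus k\}$ of $d_\gamma$ contained in $V$; thus $U^\gamma_{k,\xi}=V\cup\{\gamma\}$ is clopen. For a neighborhood of $x\in X_\gamma$ the inclusion $U^{z_j}_{f(j),\gamma}\subset U^x_{m(x),\gamma}\setminus U^x_{m(x)+2,\gamma}$, valid for all but finitely many $j$, does double duty: for $n\le m(x)$ it shows that a sufficiently deep $U^\gamma_{k,\xi}$ lies inside $U^x_{n,\gamma}\cup\{\gamma\}$, making the latter open, while simultaneously separating $\gamma$ from $x$ by disjoint clopen sets; for $n\ge m(x)+1$ one has $\gamma\notin\cl_{Y_\xi}(U^x_{n,\gamma})$ because $d_\gamma$ meets $U^x_{n,\gamma}$ only finitely. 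This clopenness bookkeeping at the active successor is the step I expect to be the main obstacle, since the $m(x)+2$ buffer must now do the separating work previously carried by the nested closures.

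The limit stage is formal: $U^x_{n,\xi}=\bigcup_{\theta_x\le\alpha<\xi}U^x_{n,\alpha}$ is open by construction, and Claim~\ref{clr} (whose proof uses only (2), (4), and the structure of $d_\delta$, and transfers unchanged) separates any $z\notin U^x_{n,\xi}$ from it by a basic clopen set, so $U^x_{n,\xi}$ is clopen, giving (3$'$). Hausdorffness propagates through the recursion via the same claim, zero-dimensionality is immediate from (3$'$), and the countable compactness verification for $Y_{\mathfrak c}$ — casing on whether $A\cap\mathfrak c$ is infinite or $A\subseteq^* X$ and using $h$ to produce an accumulation point — is word-for-word that of Proposition~\ref{mainr}.
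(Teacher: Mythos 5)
Your overall strategy is exactly the paper's: the same transfinite recursion as in Proposition~\ref{mainr} with the regularity condition replaced by clopenness, the same splitting argument (from $\mathfrak s=\mathfrak c$) producing $A$ and $d_\gamma$, the same domination step (from $\mathfrak b=\mathfrak c$), and the same limit-stage and countable-compactness bookkeeping. But there is a genuine gap at the active successor step, and it sits precisely at the point you flagged as the main obstacle. You define the functions $f_{U^x_{m(x),\gamma}}$ via the annulus $U^x_{m(x),\gamma}\setminus U^x_{m(x)+2,\gamma}$. Domination then only guarantees that for all but finitely many $j\in A$ one has $U^{z_j}_{f(j),\gamma}\subseteq U^x_{m(x),\gamma}\setminus U^x_{m(x)+2,\gamma}$; that is, the basic neighborhoods $U^\gamma_{k,\xi}$ of the new point eventually avoid $U^x_{m(x)+2,\gamma}$ --- but nothing forces them to avoid $U^x_{m(x)+1,\gamma}$. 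Each $U^{z_j}_{f(j)+k,\gamma}$ may perfectly well meet the ring $U^x_{m(x)+1,\gamma}\setminus U^x_{m(x)+2,\gamma}$ for every $j$ and every $k$. Your stated justification for ``$\gamma\notin\cl_{Y_\xi}(U^x_{n,\gamma})$ for $n\ge m(x)+1$'', namely that $d_\gamma$ meets $U^x_{n,\gamma}$ only finitely, is a non sequitur: whether $\gamma$ lies in that closure is decided by the open sets $U^{z_j}_{f(j)+k,\gamma}$, not by the points $z_j$. For each single $j$ there is of course an $l_j$ with $U^{z_j}_{l_j,\gamma}\cap U^x_{m(x)+1,\gamma}=\emptyset$ (since $U^x_{m(x)+1,\gamma}$ is clopen and omits $z_j$), but the entire purpose of the $\mathfrak b=\mathfrak c$ domination is to achieve this \emph{uniformly} in $j$, and you dominated the wrong family of functions. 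As written, $U^x_{m(x)+1,\xi}=U^x_{m(x)+1,\gamma}$ may fail to be closed in $Y_\xi$, so your condition (3$'$) breaks; this also undermines the limit stage, where closedness of the sets $U^x_{n,\gamma}$ is used as an inductive hypothesis to prove closedness of $U^x_{n,\xi}$.

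The repair is a one-line change and is what the paper actually does: since a difference of clopen sets is open, define $f_{U^x_{m(x),\gamma}}$ using the tight annulus $U^x_{m(x),\gamma}\setminus U^x_{m(x)+1,\gamma}$, which almost contains $d_\gamma$ by the choice of $m(x)$ and is open, so the minimum defining $f_{U^x_{m(x),\gamma}}(n)$ exists. Then the neighborhoods of $\gamma$ eventually avoid $U^x_{m(x)+1,\gamma}$, hence avoid $U^x_{n,\gamma}$ for \emph{every} $n\ge m(x)+1$, and all the sets $U^x_{n,\xi}$ come out clopen. Put differently: in the regular case the $m(x)+2$ buffer with closures served two purposes --- making the annulus open, and securing $\cl_{Y_\xi}(U^x_{n+1,\xi})\subseteq U^x_{n,\xi}$, a condition that tolerates $\gamma\in\cl_{Y_\xi}(U^x_{m(x)+1,\gamma})$ because $U^x_{m(x),\xi}$ contains $\gamma$. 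Zero-dimensionality demands strictly more at level $m(x)+1$, and it is the tighter clopen annulus, not the inherited buffer, that delivers it.
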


\begin{proof}
The proof of this theorem is very similar to one of Proposition~\ref{mainr}, so we give only a sketch of it.
Let $\mathcal{D}$ and $h$ be such as in the proof of Proposition~\ref{mainr}. 

Let $X_0=X$. Since $X$ is first-countable and zero-dimensional there exists a base $\mathcal B_0=\bigcup_{x\in X}\mathcal B_{0}^x$ of the topology on $X$ consisting of clopen sets, where for each $x\in X$, the collection $\mathcal B_{0}^x=\{U_{n,0}^x:n\in \w\}$ is a nested open neighborhood base at $x$. With no loss of generality we can assume that $|\mathcal B_{0}|<\mathfrak c$ and $U_{0,0}^x=X$ for each $x\in X$.

Assume that for each $\alpha<\xi$ Hausdorff zero-dimensional first-countable spaces $Y_{\alpha}$ are already constructed by defining a base $\mathcal B_{\alpha}=\bigcup_{x\in X_{\alpha}}\mathcal B_{\alpha}^x$ of the topology $\tau_{\alpha}$ on a set $X_{\alpha}\subseteq X\cup\alpha$, where for each $x\in X_{\alpha}$, the collection $\mathcal B_{\alpha}^x=\{U_{n,\alpha}^x:n\in \w\}$ is a nested open neighborhood base at $x$ consisting of clopen sets. Additionally assume that $X_{\alpha}\subseteq X_{\beta}$ for any $\alpha\in\beta$ and the family $\mathcal B_{\alpha}$ satisfies the following conditions:
\begin{enumerate}
    \item $|\mathcal B_{\alpha}|<\mathfrak c$;
    \item $U_{0,\alpha}^x=X_{\alpha}$ for each $x\in X_{\alpha}$;
    \item for every $\alpha<\beta<\xi$, $n\in\w$ and $x\in X_{\alpha}$ we have that  $U_{n,\alpha}^x=U_{n,\beta}^x\cap X_\alpha$.
\end{enumerate}



There are three cases to consider:
\begin{itemize}
\item[1)] $\xi=\gamma+1$ for some $\gamma\in\mathfrak{c}$ and $h^{-1}(\gamma)\cap X_{\gamma}$ is not an infinite closed discrete subset of $Y_{\gamma}$;
\item[2)] $\xi=\gamma+1$ for some $\gamma\in\mathfrak{c}$ and $h^{-1}(\gamma)\cap X_{\gamma}$ is an infinite closed discrete subset of $Y_{\gamma}$;
\item[3)] $\xi$ is a limit ordinal.
\end{itemize}

\medskip

1) Let $X_{\xi}=X_{\gamma}$ and $\mathcal B_{\xi}=\mathcal B_{\gamma}$.

\medskip 

2) Put $X_{\xi}=X_{\gamma}\cup\{\gamma\}$. Let $h^{-1}(\gamma)=\{z_n\}_{n\in\omega}$. Similarly as in the proof of Theorem~\ref{mainr}, one can find a subset $A\subset \w$ such that for each $U_{n,\gamma}^x\in\B_{\gamma}$ the set 
$d_{\gamma}=\{z_n:n\in A\}$ is either almost contained in $U_{n,\gamma}^x\setminus U_{n+1,\gamma}^x$ or almost disjoint with $U_{n,\gamma}^x\setminus U_{n+1,\gamma}^x$. 
Thus, for each $x\in X_{\gamma}$ there exists a unique $m(x)\in\w$ such that $d_{\gamma}\subset^* U_{m(x),\gamma}^x\setminus U_{m(x)+1,\gamma}^x$.

For each $x\in X_\gamma$ let us denote by $\mathcal E(x)$ the pair
$(U^x_{m(x),\gamma}, U^x_{m(x)+1,\gamma})\in\mathcal B_\gamma{\times}\mathcal B_\gamma$.
Define a function $f_{\mathcal E(x)}\in\w^A$ as follows: if $z_n\in U^x_{m(x),\gamma}\setminus U^x_{m(x)+1,\gamma}$, then $$f_{\mathcal E(x)}(n)=\min\{k: U^{z_n}_{k,\gamma}\subset U^x_{m(x),\gamma}\setminus U^x_{m(x)+1,\gamma}\},$$  and $f_{\mathcal E(x)}(n)=0$, otherwise. Note that $f_{\mathcal E(x)}$ is well-defined, as the set $U^x_{m(x),\gamma}\setminus U_{m(x)+1,\gamma}^x$ is open for every $x\in X_{\gamma}$.Since $|\B_{\gamma}|<\mathfrak c=\mathfrak b$ we get that there exists a function $f\in\w^A$ such that $f\geq^* f_{\mathcal E(x)}$ for every $\mathcal E(x)\in \B_{\gamma}{\times}\mathcal B_\gamma$.
We can additionally assume that $U_{f(n),\gamma}^{z_n}\cap U_{f(m),\gamma}^{z_m}=\emptyset$ for each distinct $n,m\in A$. It is easy to see that the family $\{U_{f(n),\gamma}^{z_n}:n\in A\}$ is locally finite in $Y_{\gamma}$.

Next we define an open neighborhood base at the point $\gamma\in X_{\xi}$: Put 
$$U_{0,\xi}^{\gamma}=X_{\xi}\quad  \hbox{ and }\quad U_{k,\xi}^{\gamma}=\bigcup_{n\in A\setminus k}U^{z_n}_{f(n)+k,\gamma}\cup\{\gamma\}$$ for all $k\in \N$. Since the family $\{U_{f(n),\gamma}^{z_n}:n\in A\}$ is locally finite, the set $U_{k,\xi}^{\gamma}$ is clopen for each $k\in\w$.

Now we are going to define an open neighborhood base $\mathcal B_{\xi}^x$ at each point $x\in X_{\gamma}$. Let $U_{0,\xi}^x=X_{\xi}$ for every $x\in X_{\gamma}$. 
For each $x\in X_{\gamma}$ and $n\in\N$ let $U_{n,\xi}^x=U_{n, \gamma}^x$ if $n\geq m(x)+1$ and $U_{n,\xi}^x=U_{n,\gamma}^x\cup\{\gamma\}$ if $n\leq m(x)$. 
It is easy to check that the family $\mathcal B_{\xi}=\{U_{n,\xi}^x: x\in X_{\xi},n\in\w\}$ forms a base of a topology $\tau_{\xi}$, and for each $x\in X_{\xi}$ the family $\mathcal B_{\xi}^{x}=\{U_{n,\xi}^x:n\in\w\}$ forms a nested open neighborhood base at $x$ in $Y_{\xi}$. Moreover, the space $Y_{\xi}$ is Hausdorff and zero-dimensional.


At this point it is a tedious routine to check that the family $\mathcal B_{\xi}$ satisfies conditions (1)--(3).

\medskip

3) Let $X_{\xi}=\bigcup_{\alpha\in \xi}X_{\alpha}$. For each $x\in X_{\xi}$ let $\theta_x=\min\{\alpha:x\in X_{\alpha}\}$. For each $x\in X_{\xi}$ and $n\in\w$ put $$U_{n,\xi}^x=\bigcup_{\theta_x\leq\alpha<\xi}U_{n,\alpha}^x.$$
It is straightforward to check that the family $\mathcal B_{\xi}=\{U_{n,\xi}^x: n\in\w, x\in X_{\xi}\}$ forms a base of a topology $\tau_{\xi}$, satisfies conditions (1)--(3) and  for each $x\in X_{\xi}$ the family $\mathcal B_{\xi}^{x}=\{U_{n,\xi}^x:n\in\w\}$ forms an open neighborhood base at $x$ in $Y_{\xi}$.
Hausdorffness of the space $Y_{\xi}$ follows from the next claim which can be proved similarly as Claim~\ref{clr}.
\begin{claim} For any $\gamma\in\xi$, $n,m\in\w$ and distinct points $y_0, y_1\in Y_{\gamma}$, if $U^{y_0}_{n,\gamma}\cap U^{y_1}_{m,\gamma}=\emptyset$, then $U^{y_0}_{n,\xi}\cap U^{y_1}_{m,\xi}=\emptyset$.
\end{claim}

It remains to check that the space $Y_{\xi}$ is zero-dimensional which is done in the following claim.

\begin{claim} For any $x\in X_\xi$ and $n\in\w$ the set $U^{x}_{n,\xi}$ is closed.
\end{claim}
\begin{proof} 
Fix any $y\in X_{\xi}\setminus U^{x}_{n,\xi}$ and find $\gamma<\xi$ such that $x,y\in X_{\gamma}$. It follows that $y\notin U^{x}_{n,\gamma}$. By the inductive assumption the set $U^{x}_{n,\gamma}$ is closed, implying the existence of $m\in \w$ such that $U^{y}_{m,\gamma}\cap U^{x}_{n,\gamma}=\emptyset$. Then the previous claim implies that $U^{y}_{m,\xi}\cap U^{x}_{n,\xi}=\emptyset$, witnessing that the set $U^{x}_{n,\xi}$ is closed.
\end{proof}

Similarly as in the proof of Theorem~\ref{mainr} it can be checked that the space $Y_{\mathfrak{c}}$ is Hausdorff, zero-dimensional, first-countable, countably compact and contains $X$ as a dense subspace.
\end{proof}

Let $D$ be a subset of a set $S$. An {\em expansion} of $D$ is a family $\{A_d : d \in D\}$ of subsets of $S$ such that $d \in A_d$ and
$A_d \cap D = \{d\}$ for all $d \in D$. A family $U$ of subsets of a space $X$ is called {\em discrete} if every point of $X$ has an open neighborhood intersecting at most one element of $U$.  A topological space $X$  satisfies (or has) {\em Property D} if every
countable closed discrete subspace $D \subset X$ has an expansion to a discrete family of open sets. A space $X$ is called {\em pseudonormal}  if, whenever $F_0$ and $F_1$ are disjoint closed sets, one of which 
is countable, there are disjoint open sets $U_0$ and $U_1$ containing $F_0$ and $F_1$ respectively. 
We will also need the following facts proved in~\cite[Proposition 3.6]{Ny1} and~\cite[Theorem 3.7]{Ny1}, respectively.

\begin{proposition}[{Nyikos}]\label{propD}
Every pseudonormal space $X$ has property D.    
\end{proposition}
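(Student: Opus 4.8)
The plan is to prove that every pseudonormal space $X$ satisfies Property D by directly exploiting the definition of pseudonormality to separate a countable closed discrete set from the remaining points of such sets, then bootstrapping this into a full discrete expansion. Let $D=\{d_n:n\in\w\}$ be a countable closed discrete subspace of $X$. The key observation is that for each fixed $n$, the singleton $\{d_n\}$ and the set $D\setminus\{d_n\}$ are disjoint closed sets (closedness of $D\setminus\{d_n\}$ follows from $D$ being closed and discrete), one of which is countable. Hence pseudonormality supplies disjoint open sets separating them, giving in particular an open $V_n\ni d_n$ with $V_n\cap D=\{d_n\}$. This already produces an expansion $\{V_n:n\in\w\}$; the remaining work is to shrink the $V_n$ to a \emph{discrete} family of open sets.

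The main step is to use pseudonormality a second time, now to handle the tails of $D$ simultaneously. First I would observe that if $X$ is pseudonormal and $D$ is countable closed discrete, then $X$ is in fact \emph{collectionwise Hausdorff} with respect to $D$ in a strong sense: one can separate $D$ from any closed set disjoint from it. The cleanest route is to fix the expansion $\{V_n\}$ above and then, for the discreteness, note that a family $\{W_n:n\in\w\}$ of open sets is discrete provided every point has a neighborhood meeting at most one $W_n$. To arrange this, I would invoke pseudonormality to separate $D$ (countable, closed) from a suitably chosen closed set, obtaining disjoint open $U\supseteq D$ and the complement; then set $W_n=V_n\cap U_n$ where $U_n$ is built so that the closures or the covering structure forces local finiteness of order one.

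Concretely, the cleaner argument runs as follows. Since $D$ is closed and discrete, any subset of $D$ is closed in $X$. For each $n$, apply pseudonormality to the disjoint closed countable sets $\{d_n\}$ and $D\setminus\{d_n\}$ to obtain the open $V_n$ as above. Now consider the closed set $F=X\setminus\bigcup_{n}V_n$, which is disjoint from the countable closed set $D$; applying pseudonormality to $D$ and $F$ yields disjoint open sets $G\supseteq D$ and $H\supseteq F$. Define $W_n=V_n\cap G$. Each $W_n$ is open and contains $d_n$ with $W_n\cap D=\{d_n\}$. For discreteness, take any point $x\in X$: if $x\in F$ then $H$ is a neighborhood of $x$ disjoint from $G\supseteq\bigcup_n W_n$, so $x$ has a neighborhood meeting no $W_n$; if $x\notin F$ then $x\in V_m$ for some $m$, and the hard part is ensuring $V_m$ meets only finitely many — ideally one — of the $W_n$.

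The main obstacle I anticipate is exactly this last point: a single application of pseudonormality guarantees the \emph{expansion} and separation from the complement, but not automatically that the expanding sets form a \emph{discrete} (not merely disjoint or point-finite) family, since the $V_n$ produced by repeated pseudonormality could accumulate. I expect the resolution to require either an additional diagonal application of pseudonormality to separate $d_m$ from the tail $\{d_n:n>m\}$ while refining $V_m$ to avoid the later $V_n$, or an appeal to the fact that in a pseudonormal space one can separate $D$ from each of the closed sets $D\setminus\{d_n\}$ in a coherent, nested fashion so that the resulting open sets shrink. The delicate bookkeeping lies in making these separations compatible across all $n$ simultaneously so that every point of $X$ — including limit points of $D$ in larger ambient sets — ends up with a neighborhood meeting at most one $W_n$; this is where I would spend the most care, as it is precisely the gap between a disjoint open expansion and a discrete one.
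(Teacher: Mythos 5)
Your overall architecture is the right one (and is essentially the standard argument behind Nyikos's Proposition 3.6, which the paper only cites rather than proves): first expand $D=\{d_n:n\in\w\}$ by open sets $V_n$ with $V_n\cap D=\{d_n\}$, then apply pseudonormality a second time to the countable closed set $D$ and the closed set $F=X\setminus\bigcup_n V_n$, obtaining disjoint open $G\supseteq D$ and $H\supseteq F$, and put $W_n=V_n\cap G$. But there is a genuine gap, which you yourself flag and never close: in the case $x\notin F$ you need the neighborhood $V_m\ni x$ to meet at most one $W_n$, and nothing in your construction guarantees this. Pseudonormality applied to $\{d_n\}$ and $D\setminus\{d_n\}$ controls $V_n$ only on $D$; the sets $V_n$ may overlap arbitrarily off $D$, and if some point $x\in G$ lies in infinitely many $V_n$, then $x$ itself lies in infinitely many $W_n$, so no neighborhood of $x$ meets at most one of them and discreteness fails. (Note also that "finitely many" would not suffice: a point-finite or locally finite open expansion is strictly weaker than a discrete one.)

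The missing idea is small and uses data you already produced but discarded: keep both halves of each separation. When you separate $\{d_n\}$ from $D\setminus\{d_n\}$, retain the disjoint open sets $A_n\ni d_n$ and $B_n\supseteq D\setminus\{d_n\}$, and define $V_n=A_n\cap\bigcap_{k<n}B_k$. This is open (a finite intersection of open sets), contains $d_n$, still misses $D\setminus\{d_n\}$, and the family $\{V_n:n\in\w\}$ is now pairwise disjoint: for $m<n$ we have $V_n\subseteq B_m$ and $V_m\subseteq A_m$, while $A_m\cap B_m=\emptyset$. With this modification your second step finishes the proof verbatim: if $x\in H$, then $H$ is a neighborhood of $x$ missing every $W_n\subseteq G$; if $x\notin H$, then $x\notin F$ (as $F\subseteq H$), so $x$ lies in $V_m$ for exactly one $m$, and $V_m\cap W_n\subseteq V_m\cap V_n=\emptyset$ for all $n\neq m$. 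Hence $\{W_n:n\in\w\}$ is a discrete open expansion of $D$, and none of the "diagonal" or "nested" bookkeeping you speculate about in your final paragraph is needed.
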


\begin{proposition}[{Nyikos}]\label{Lindelof}
Every first-countable regular space of Lindel\"of number $<\mathfrak b$ is pseudonormal.
 \end{proposition}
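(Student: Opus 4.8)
The plan is to prove Proposition~\ref{Lindelof}: every first-countable regular space $X$ with Lindel\"of number $<\mathfrak b$ is pseudonormal. So let $F_0, F_1$ be disjoint closed sets with $F_0$ countable, say $F_0=\{x_k:k\in\w\}$, and I must separate them by disjoint open sets. First I would use regularity and first-countability to fix, for each $k\in\w$, a nested neighborhood base $\{V_n^{x_k}:n\in\w\}$ at $x_k$ with $\cl(V_n^{x_k})\cap F_1=\emptyset$ (possible since $x_k\notin F_1$ and $F_1$ is closed). The natural candidate for an open set around $F_0$ is $U_0=\bigcup_{k\in\w}V_{g(k)}^{x_k}$ for a suitable ``speed'' function $g\in\w^\w$; the whole difficulty is choosing $g$ so that $\cl(U_0)$ stays off $F_1$, at which point $U_1=X\setminus\cl(U_0)$ finishes the job.

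The heart of the argument is where the bound on the Lindel\"of number enters. The idea is that for each point $y\in F_1$ I want a neighborhood meeting only finitely many of the $V_{g(k)}^{x_k}$, or more precisely meeting each $\cl(V_{g(k)}^{x_k})$ trivially for all but finitely many $k$; summing these local conditions should keep $y\notin\cl(U_0)$. For a fixed $y$ and a fixed neighborhood $W$ of $y$, being disjoint from $\cl(V_n^{x_k})$ becomes easier as $n$ grows (the neighborhoods shrink toward $x_k$), so for each $k$ there is a threshold function recording how fast $V^{x_k}$ must shrink to avoid $W$. Using a cover of $F_1$ by $<\mathfrak b$ many such witnessing open sets — here is exactly where Lindel\"of number $<\mathfrak b$ is used — I extract $<\mathfrak b$ many functions in $\w^\w$, and by $|\text{family}|<\mathfrak b$ I find a single $g\in\w^\w$ dominating all of them. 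This $g$ simultaneously pushes every $V_{g(k)}^{x_k}$ inside the ``good'' part relative to every chosen cover element.

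The main obstacle I anticipate is bookkeeping the order of quantifiers so that one function $g$ works against all of $F_1$ at once, rather than separately against each $y\in F_1$. The resolution is to cover $F_1$ by countably-many-indexed families of basic open sets and invoke the Lindel\"of condition to reduce to a subfamily of size $<\mathfrak b$; then one defines, for each cover element $W_\alpha$ and each $k$, a value $h_\alpha(k)$ large enough that $V_{h_\alpha(k)}^{x_k}\cap W_\alpha=\emptyset$ whenever such a value exists (setting $h_\alpha(k)=0$ otherwise). Dominating the family $\{h_\alpha:\alpha\}$ by $g$ guarantees that each $W_\alpha$ meets $\cl(V_{g(k)}^{x_k})$ for only finitely many $k$; a local-finiteness/closure computation at a putative point $y\in F_1\cap\cl(U_0)$ then yields a contradiction, since $y$ lies in some $W_\alpha$ and $W_\alpha$ would have to meet $U_0$ in points accumulating at $y$, which the domination forbids.

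With $U_0=\bigcup_k V_{g(k)}^{x_k}$ open and containing $F_0$, and $\cl(U_0)\cap F_1=\emptyset$ established, I take $U_1=X\setminus\cl(U_0)$, which is open, contains $F_1$, and is disjoint from $U_0$. This exhibits the required separation and completes the proof that $X$ is pseudonormal. The only genuinely delicate point is the uniform domination step; everything else is routine manipulation of nested bases and closures afforded by regularity and first-countability.
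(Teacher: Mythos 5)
The paper does not actually contain a proof of Proposition~\ref{Lindelof}: it is quoted from \cite[Theorem 3.7]{Ny1}, so your proposal can only be measured against the standard argument, which is indeed what your outline reconstructs. The skeleton is right: nested bases $\{V^{x_k}_n:n\in\w\}$ at the points of $F_0=\{x_k:k\in\w\}$ with $\cl(V^{x_k}_n)\cap F_1=\emptyset$, a cover of $F_1$ reduced to one of size $<\mathfrak b$ via the Lindel\"of number, one threshold function per cover element, a single $\leq^*$-dominating $g$ (possible exactly because the family has size $<\mathfrak b$), and finally $U_0=\bigcup_{k\in\w}V^{x_k}_{g(k)}$ and $U_1=X\setminus\cl(U_0)$.

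There is, however, one genuine gap, precisely at the spot you papered over with ``setting $h_\alpha(k)=0$ otherwise.'' A threshold $n$ with $V^{x_k}_n\cap W_\alpha=\emptyset$ exists if and only if $x_k\notin\cl(W_\alpha)$, and for an arbitrary open neighborhood $W_\alpha$ of a point of $F_1$ this can fail for infinitely many $k$ simultaneously, even when $W_\alpha\cap F_0=\emptyset$: in $\R$ with $F_0=\N$, the open set $W=(0,1)\cup\bigcup_{k\in\N}(k,k+1/k)$ is a neighborhood of $1/2$ disjoint from $F_0$ whose closure contains every point of $F_0$. For such pairs $(\alpha,k)$ the fallback value $0$ carries no information --- every $V^{x_k}_n$ meets $W_\alpha$, so no choice of $g(k)$ helps --- and your key claim, that domination forces each $W_\alpha$ to meet only finitely many of the $V^{x_k}_{g(k)}$, is false as stated; the closure computation at $y$ then collapses. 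The repair is a second use of regularity, made \emph{before} the Lindel\"of number is invoked: since $F_0$ is closed and $y\notin F_0$, choose each $W_y$ with $\cl(W_y)\cap F_0=\emptyset$. Then $X\setminus\cl(W_y)$ is a neighborhood of every $x_k$, so $h_y(k)=\min\{n: V^{x_k}_n\cap W_y=\emptyset\}$ is defined for \emph{all} $k$, the fallback is never needed, and the rest of your argument runs verbatim: if $y\in F_1$ lies in the subcover element $W_{y_\alpha}$, then $W_{y_\alpha}$ misses $V^{x_k}_{g(k)}$ for all but finitely many $k$, while the finitely many exceptional sets have closures disjoint from $F_1$ by your choice of the bases, whence $y\notin\cl(U_0)$.
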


Propositions~\ref{propD} and~\ref{Lindelof} imply the following useful corollary.

 \begin{corollary}\label{useful}
 Every first-countable regular space of weight $<\mathfrak b$ has property D.    
 \end{corollary}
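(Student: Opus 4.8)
The plan is to deduce Corollary~\ref{useful} directly from Propositions~\ref{propD} and~\ref{Lindelof} by inserting the standard inequality $L(X)\leq w(X)$ relating the weight to the Lindel\"of number $L(X)$, the least cardinal $\kappa$ such that every open cover of $X$ admits a subcover of size $\leq\kappa$. Once this inequality is in hand, any space of weight $<\mathfrak b$ automatically has Lindel\"of number $<\mathfrak b$, hence is pseudonormal by Proposition~\ref{Lindelof}, and hence has property~D by Proposition~\ref{propD}.

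The only genuine step is therefore to verify $L(X)\leq w(X)$, which I would do by the classical argument of refining an arbitrary open cover through a base of minimal size. Fix a base $\mathcal B$ with $|\mathcal B|=w(X)$ and an arbitrary open cover $\mathcal U$ of $X$. For each $x\in X$ choose $U_x\in\mathcal U$ with $x\in U_x$, and then a basic set $B_x\in\mathcal B$ with $x\in B_x\subseteq U_x$. The subfamily $\mathcal B'=\{B_x:x\in X\}\subseteq\mathcal B$ covers $X$ and has cardinality at most $w(X)$; choosing for each $B\in\mathcal B'$ a member $U_B\in\mathcal U$ with $B\subseteq U_B$ yields a subcover $\{U_B:B\in\mathcal B'\}$ of $\mathcal U$ of size $\leq w(X)$. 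This establishes $L(X)\leq w(X)$.

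Combining these observations, if $w(X)<\mathfrak b$ then $L(X)\leq w(X)<\mathfrak b$, so Proposition~\ref{Lindelof} yields pseudonormality and Proposition~\ref{propD} then yields property~D. The proof is essentially a formality: I do not expect any real obstacle beyond recording the weight--Lindel\"of inequality, since all the substantive content is already carried by the two cited results of Nyikos.
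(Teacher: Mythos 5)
Your proposal is correct and matches the paper's argument exactly: the paper derives the corollary from Propositions~\ref{propD} and~\ref{Lindelof}, leaving implicit precisely the standard inequality $L(X)\leq w(X)$ that you verify. Your spelled-out refinement argument for that inequality is the classical one and is valid.
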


The proof of the following result uses techniques developed in~\cite{A,Ny1,Ny2,Ost}. 

 \begin{proposition}\label{lce}
$(\mathfrak b=\mathfrak c)$ Each first-countable Hausdorff locally compact space of weight $<\mathfrak c$ embeds densely into a first-countable Hausdorff locally compact countably compact space.  
 \end{proposition}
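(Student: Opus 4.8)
The plan is to mimic the transfinite construction of Proposition~\ref{mainr}, adding one new point $\gamma$ at each successor stage to kill a prescribed infinite closed discrete set, while maintaining local compactness as an extra invariant alongside regularity and first-countability. Assuming the underlying set of $X$ is disjoint from $\mathfrak c$, I first fix a bijection $h$ as before and, since $\mathfrak b=\mathfrak c$, I may arrange my bookkeeping over the family $\mathcal D$ of countable closed discrete subsets of $X$ together with $[\mathfrak c]^\w$. At stage $0$ I start with a base $\mathcal B_0$ of clopen-modulo-compact neighborhoods: concretely, since $X$ is locally compact, Hausdorff and first-countable, each point $x$ has a nested neighborhood base $\{U_{n,0}^x:n\in\w\}$ with $\cl(U_{n+1,0}^x)\subset U_{n,0}^x$ and with $\cl(U_{1,0}^x)$ \emph{compact}. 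I would carry, as the key additional inductive invariant beyond conditions (1)--(4) of Proposition~\ref{mainr}, the requirement that $\cl_{Y_\alpha}(U_{1,\alpha}^x)$ is compact for every $x\in X_\alpha$, i.e. that every point of $Y_\alpha$ retains a compact neighborhood.

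The successor case~2) proceeds exactly as in Proposition~\ref{mainr}: using $\mathfrak s=\mathfrak b=\mathfrak c$ (which holds since $\mathfrak b=\mathfrak c$ forces $\mathfrak s\le\mathfrak c$, and in fact the splitting argument only needs that the relevant family of size $<\mathfrak c$ is non-splitting, guaranteed here because weight $<\mathfrak c\le\mathfrak s$ fails---so the real leverage is the unbounded/dominating step from $\mathfrak b=\mathfrak c$) I select $d_\gamma=\{z_n:n\in A\}$ that is, for each basic set, almost inside or almost outside each annulus $U_{n,\gamma}^x\setminus U_{n+1,\gamma}^x$, extract the locally finite sequence $\{U_{f(n),\gamma}^{z_n}:n\in A\}$ dominated by a single $f$, and define the new point's base $U_{k,\xi}^\gamma=\bigcup_{n\in A\setminus k}U_{f(n)+k,\gamma}^{z_n}\cup\{\gamma\}$. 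The crucial new verification is that $\gamma$ has a compact neighborhood: I would show $\cl_{Y_\xi}(U_{1,\xi}^\gamma)$ is compact by observing that it is the union of $\{\gamma\}$ with the compact closures $\cl(U_{f(n)+1,\gamma}^{z_n})$ arranged along the convergent-like structure toward $\gamma$, so that every open cover admits a finite subcover by capturing $\gamma$ in one member (which, by definition of the $U_{k,\xi}^\gamma$, contains a cofinite tail of the sequence) and covering the finitely many remaining compact pieces. Here I would invoke Corollary~\ref{useful} to get Property~D of $Y_\gamma$, which is what lets me expand $d_\gamma$ to a genuinely discrete---hence locally finite---collection before amalgamating.

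The limit case~3) is handled by the same direct-limit formula $U_{n,\xi}^x=\bigcup_{\theta_x\le\alpha<\xi}U_{n,\alpha}^x$, and an analogue of Claim~\ref{clr} shows disjointness of basic neighborhoods is preserved, yielding regularity of $Y_\xi$; compactness of $\cl_{Y_\xi}(U_{1,\xi}^x)$ passes to the limit because the closure stabilizes at the stage $\theta_x$ where $x$ first appears and no later stage adds points to $U_{1,\alpha}^x$ interior to that compact set (the only points added are the new $\gamma$'s, each of which is either already trapped or stays outside). Finally, countable compactness of $Y_{\mathfrak c}$ follows verbatim from the bookkeeping argument at the end of Proposition~\ref{mainr}: any countable $A$ either meets $\mathfrak c$ in an infinite set handled at stage $h(B)$, or lies almost in $X$ and is dealt with at stage $h(A)$.

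The main obstacle I anticipate is \emph{not} regularity or first-countability---those transfer by the same machinery as in Proposition~\ref{mainr}---but rather propagating \emph{local compactness} through the construction, specifically proving that the newly adjoined point $\gamma$ genuinely acquires a compact neighborhood. The delicate point is that $U_{1,\xi}^\gamma$ is an infinite union of the compact sets $\cl(U_{f(n)+1,\gamma}^{z_n})$, so its closure can only be compact if the local finiteness of $\{U_{f(n),\gamma}^{z_n}:n\in A\}$ is upgraded to the statement that \emph{outside any neighborhood of $\gamma$ only finitely many of these sets meet a given compact set}; ensuring this requires carefully choosing $f$ (via the $\mathfrak b=\mathfrak c$ domination) so that the clusters shrink fast enough, and verifying that no accumulation escapes to the boundary. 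Controlling this compactness at limit stages, where infinitely many such points have already been added, is where the bulk of the technical care will go.
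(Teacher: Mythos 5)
Your proposal has two genuine gaps, and the second one is fatal to the whole approach, not just to a detail of the verification.

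First, the combinatorial hypothesis. The successor step of Proposition~\ref{mainr}, which you propose to run ``exactly'' as before, begins by selecting $d_\gamma$ so that it is almost inside or almost outside each annulus $U^x_{n,\gamma}\setminus U^x_{n+1,\gamma}$; this is precisely the assertion that the family $\{A_{U^x_{n,\gamma}}:U^x_{n,\gamma}\in\mathcal B_\gamma\}$, of size $<\mathfrak c$, is not splitting, and that is exactly what $\mathfrak s=\mathfrak c$ provides and what $\mathfrak b=\mathfrak c$ does not. Your parenthetical justification is circular ($\mathfrak s\le\mathfrak c$ is trivially true and useless), and it is consistent that $\mathfrak s=\w_1<\mathfrak b=\mathfrak c$ (e.g.\ in the Hechler model, where ground-model splitting families are preserved). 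Indeed, the paper's Theorems~\ref{lc} and~\ref{sbc} are designed to separate these two hypotheses: dense embeddability for locally compact spaces is equivalent to $\mathfrak b=\mathfrak c$ alone, so Proposition~\ref{lce} must admit a proof that makes no appeal to splitting, and in a model of $\mathfrak s<\mathfrak b=\mathfrak c$ your argument never gets off the ground.

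Second, local compactness at the new point $\gamma$ fails, and not for a reason that a faster-growing $f$ can repair. A basic neighborhood $U^\gamma_{k,\xi}=\{\gamma\}\cup\bigcup_{n\in A\setminus k}U^{z_n}_{f(n)+k,\gamma}$ contains only the $k$-fold shrunken sets, \emph{not} a cofinite tail of the compact pieces $\cl(U^{z_n}_{f(n)+1,\gamma})$, so your finite-subcover argument breaks at its first step. Worse, pick $p_n\in\cl(U^{z_n}_{f(n)+1,\gamma})\setminus U^{z_n}_{f(n)+2,\gamma}$: concretely, for $X=\R$, $z_n=n$ and $U^{z_n}_{j}=(n-2^{-j},n+2^{-j})$, take $p_n=n+2^{-f(n)-1}$. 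The set $\{p_n:n\in A\}$ lies in $\cl_{Y_\xi}(U^\gamma_{1,\xi})$, is closed and discrete in $Y_\gamma$, and misses $U^\gamma_{2,\xi}$ entirely, so it is an infinite closed discrete subset of $Y_\xi$ sitting inside $\cl_{Y_\xi}(U^\gamma_{1,\xi})$; hence that closure is not even countably compact. Since any compact neighborhood of $\gamma$ in the Hausdorff space $Y_\xi$ would be closed and would contain some $\cl_{Y_\xi}(U^\gamma_{k,\xi})$, the same annular residue at level $k$ shows $\gamma$ has no compact neighborhood at all: the obstruction is topological (boundary points of the $U^{z_n}_{f(n)+k,\gamma}$ accumulate nowhere), not combinatorial, so no choice of $f$ via domination helps. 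This is exactly why the paper's proof of Proposition~\ref{lce} does not adjoin a single point per closed discrete set but attaches an entire copy of $[0,1)$ at each step: using Corollary~\ref{useful} and local compactness it expands $h^{-1}(\gamma)$ to a discrete family $\{U_n\}$ of open sets with compact closures, fixes continuous $f_n$ with $f_n(z_n)=0$ and $f_n\equiv 1$ off $U_n$, and gives each attached point $(x,1)$ neighborhoods built from the preimages $f_m^{-1}((x-1/n,x+1/n))$; then any residual sequence spread over infinitely many $U_m$'s has its $f$-values accumulating at some $z$ in the compact interval and therefore converges to the interval point $(z,1)$, which is precisely what makes the closures of the new basic neighborhoods compact. (This change also forces different bookkeeping from Proposition~\ref{mainr}: new closed discrete sets can live on the attached intervals, so the paper uses stage-wise bijections $g_\alpha:\mathcal D_\alpha\to\{\alpha\}{\times}\mathfrak c$, a pairing bijection $h:\mathfrak c{\times}\mathfrak c\to\mathfrak c$, and the regularity of the cardinal $\mathfrak b=\mathfrak c$ to catch them all.)
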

\begin{proof}
  Fix any Hausdorff locally compact first-countable space $X$ of weight $< \mathfrak c=\mathfrak b$. Without loss of generality we can assume that the underlying set of $X$ is disjoint with $[0,1){\times}\mathfrak c$. First countability of $X$ implies that $|X|\leq \mathfrak c^{\w}=\mathfrak c$. If $X$ is countably compact, then there is nothing to prove. Otherwise, let 
$$\mathcal{D}_0=\{A\in [X]^{\omega}: \hbox{ } A \hbox{ is closed and discrete in }X \}.$$ 

Fix any bijection $h:\mathfrak c{\times} \mathfrak c\rightarrow \mathfrak c$ such that $h((a,b))\geq a$. 

 Next, for every $\alpha\leq \mathfrak{c}$ we shall recursively construct a topology $\tau_{\alpha}$ on $X_{\alpha}$ which will be a union of $X$ and some pairwise disjoint family of copies of the half interval $[0,1)$. For the sake of brevity we denote the space $(X_{\alpha},\tau_{\alpha})$ by $Y_{\alpha}$. At the end, we will show that the space $Y_{\mathfrak{c}}$ has the desired properties.  
 
 
 Let $X_0=X$ and fix any bijection $g_0: \mathcal{D}_0\rightarrow \{0\}{\times}\mathfrak{c}$. Note that by the definition of $h$,
 $h^{-1}(0)\in\{0\}{\times}\mathfrak c$, so
 $g_0^{-1}(h^{-1}(0))$ is well-defined (and belongs to $\in \mathcal D_0$). By Corollary~\ref{useful}, the space $X$ has property $D$. It follows that the closed discrete set $\{z_n:n\in\w\}=g_0^{-1}(h^{-1}(0))$ can be expanded to a discrete family $\{U_n:n\in\w\}$ of open sets such that $z_n\in U_n$. Since $X$ is locally compact, we lose no generality assuming that $\cl_{X}(U_n)$ is compact for every $n\in\w$. For each $n\in\w$ fix a continuous function $f_n$ such that $f(z_n)=0$ and $f(X\setminus U_n)=1$.  Let $X_1=X_0\cup([0,1){\times}\{1\})$, and $\tau_1$ be a topology on $X_1$ which satisfies the following conditions:
 \begin{enumerate}
     \item $X_0$ is an open subspace of $X_1$;
     \item  the sets $$W_n(0,1)=[0,1/n){\times}\{1\}\cup \bigcup_{m\geq n}f_m^{-1}([0,1/n)),$$ $n\in\mathbb N$ form an open neighborhood base at the point $(0,1)\in X_1$;
     \item For $0<x<1$ the sets 
     $$W_n(x,1)=(x-1/n,x+1/n){\times}\{1\}\cup\bigcup_{m\geq n}f_m^{-1}((x-1/n,x+1/n)),$$ where $n\in\mathbb N$ satisfies $(x-1/n,x+1/n)\subseteq (0,1)$, form an open neighborhood base at the point $(x,1)\in X_1$. 
 \end{enumerate}
 It is easy to check that $Y_1$ is a first-countable Hausdorff space of weight $<\mathfrak b$.
 Observe that $(0,1)$ is an accumulation point of the set $\{z_n:n\in\w\}=g_0^{-1}(h^{-1}(0))$ in $Y_1$.
 Clearly, $Y_1$ is locally compact at each point $x\in X_0$. Let us show that $Y_1$ is locally compact at $(0,1)$. We claim that the set $\cl_{Y_1}(W_2(0,1))$ is compact. Since the family $\{U_n:n\in\w\}$ is discrete and the sets $U_n$, $n\in\w$ have compact closure in $X$, it is easy to see that the set 
 $$\cl_{Y_1}(W_2(0,1))\subseteq [0,1/2]{\times}\{1\}\cup\bigcup_{n\in\w\setminus\{0,1\}}\cl_X(U_n)$$ 
 is $\sigma$-compact. At this point it is enough to show that $\cl_{Y_1}(W_2(0,1))$ is countably compact. Fix any infinite subset $A=\{a_n:n\in\w\}\subset \cl_{Y_1}(W_2(0,1))$. If for some $n\in\w$ the set $A\cap\cl_X(U_n)$ is infinite or the set $A\cap ([0,1/2){\times}\{1\})$ is infinite, then $A$ has an accumulation point in $Y_1$. 
 So, let us assume that all the aforementioned intersections are finite.
 Shrinking the set $A$ if necessary, we can assume that $A\subset X$ and $|A\cap U_n|\leq 1$ for all $n\in\w$. For each $n\in \w$ let $m(n)$ be such that $a_n\in U_{m(n)}$. Put $y_n=f_{m(n)}(a_n)$. Since $\{y_n:n\in \w\}\subset [0,1/2)$ and the interval $[0,1/2]$ is compact there exists $z\in [0,1/2]$ and a subsequence $\{y_{n_k}:k\in\w\}$ which converges to $z$. It is easy to see that $(z,1)$ is an accumulation point of the set $A$ in $Y_1$. Hence the $\sigma$-compact set $\cl_{Y_1}(W_2(0,1))$ is countably compact and thus compact. Similarly one can check that $Y_1$ is locally compact at $(x,1)$ for each $0<x<1$. Let
 $$\mathcal{D}_1=\{A\in [X_1]^{\omega}: \hbox{ } A \hbox{ is closed and discrete in }Y_1 \}.$$ 
Fix any bijection $g_1: \mathcal{D}_1\rightarrow \{1\}{\times}\mathfrak{c}$. 

 Assume that locally compact Hausdorff spaces $Y_\alpha=(X_{\alpha},\tau_{\alpha})$ with weight $<\mathfrak b$ are constructed for all $\alpha\in\xi$ such that $Y_\alpha$ is an open dense subspace of $Y_{\eta}$ for all $\alpha\in \eta\in \xi$. Additionally assume that if $X_{\alpha+1}\setminus X_{\alpha}\neq \emptyset$, then $X_{\alpha+1}\setminus X_{\alpha}=[0,1){\times}\{\alpha+1\}$ and for each $\alpha\in\xi$ it is defined a bijection $g_{\alpha}: \mathcal{D}_{\alpha}\rightarrow \{\alpha\}{\times}\mathfrak{c}$, where $$\mathcal{D}_{\alpha}=\{A\in [X_{\alpha}]^{\omega}: \hbox{ } A \hbox{ is closed and discrete in }Y_{\alpha} \}.$$


If $\xi$ is a limit ordinal, then put $X_{\xi}=\bigcup_{\alpha\in\xi}X_{\alpha}$ and $\tau_{\xi}=\bigcup_{\alpha\in\xi}\tau_{\alpha}$. Clearly, $Y_\xi$ is first-countable, Hausdorff, locally compact with weight less than $\mathfrak b$. 
Fix any bijection $g_{\xi}: \mathcal D_\xi\rightarrow \{\xi\}{\times}\mathfrak c$.

Assume that $\xi=\gamma+1$ for some $\gamma\in\mathfrak{c}$. Consider the pair $h^{-1}(\gamma)=(a,b)\in \mathfrak c{\times}\mathfrak c$. If the set $g_a^{-1}(h^{-1}(\gamma))\in \mathcal D_a$ already has an accumulation point in $Y_{\gamma}$, then  put $Y_{\xi}=Y_{\gamma}$. If the set $g_a^{-1}(h^{-1}(\gamma))$ is closed and discrete in $Y_{\gamma}$, then we construct a Hausdorff locally compact first-countable space $Y_{\xi}$ of weight $<\mathfrak b$ similarly as we constructed $Y_1$. In particular, $X_{\xi}=X_{\gamma}\cup[0,1){\times}\{\xi\}$, and $(0,\xi)$ is an accumulation point of the set $g_a^{-1}(h^{-1}(\gamma))$.

Obviously, $Y_{\mathfrak c}$ is a Hausdorff locally compact first-countable space. To derive a contradiction, assume that $Y_{\mathfrak c}$ is not countably compact. Then $Y_{\mathfrak c}$ contains an infinite countable discrete closed subset $A$. Since the cardinal $\mathfrak b=\mathfrak c$ is regular, there exists  an ordinal $\xi\in\mathfrak c$ such that $A\subset X_{\xi}$. Obviously, $A$ is an infinite closed discrete subset of $Y_{\beta}$ for each $\beta\geq \xi$. Then $g_{\xi}(A)=(\xi,\delta)$ for some $\delta\in\mathfrak c$. Let $\mu\in \mathfrak c$ be such that $\xi\leq \mu$ and $h((\xi,\delta))=\mu$. By the construction, the point $(0,\mu+1)\in X_{\mu+1}$ is an accumulation point of $A$ in $Y_{\mathfrak c}$, which implies the desired contradiction. Hence the space $Y_{\mathfrak c}$ is countably compact.   
\end{proof}

Next we turn to embeddings into pseudocompact spaces.
We adopt the notations from~\cite{Bell}.

Let $S=\{S_n:n\in\w\}$ and $T=\{T_n:n\in\w\}$ be sequences of subsets of a set $X$. We say that $S$ {\em refines} $T$ if there exists an injection $f\in\w^\w$ such that $S_n\subseteq T_{f(n)}$ for all $n\in\w$. We write that the sequence $S$ is {\em eventually contained} in a subset $A\subset X$ if there exists $k\in\w$ such that $\bigcup_{n\geq k}S_n\subset A$.  The sequences $S$ and $T$ are called {\em eventually disjoint} if there exists $k\in\w$ such that $\bigcup_{n\geq k}S_n\cap \bigcup_{n\geq k}T_n=\emptyset$. A family $\mathcal R$ of sequences of subsets of $X$ is called {\em eventually disjoint} if any distinct elements $S,T\in\mathcal R$ are eventually disjoint. 

Let $X$ be a zero-dimensional space and $\B$ be a base of $X$ consisting of clopen sets. By $\D(\B)$ we denote the set of all locally finite sequences of elements of $\B$. A sequence $S=\{S_n:n\in\w\}\in\D(\B)$ is called {\em $\B$-exact} if for each $B\in\B$ which intersects infinitely many $S_n$ there exists $k\in\w$ such that $\bigcup_{n\geq k}S_n\subset B$. 
Let $\E(\B)=\{S\in\D(\B): S$ is $\B$-exact$\}$.

For any eventually disjoint family $\mathcal R\subset \E(\B)$ 
we define a topology $\tau$ on the set $X\cup \mathcal R$ as follows: for each $B\in\B$ let $B^*=B\cup\{R\in\mathcal R: R$ is eventually contained in $B\}$. For every $R=\{R_n:n\in\w\}\in \mathcal R$ and $n\in\w$ let $U_n(R)=\{R\}\cup\bigcup_{m\geq n}R_m^*$. The topology $\tau$ is generated by the base $\B^*=\{B^*:B\in\B\}\cup\{U_n(R):R\in\mathcal R,n\in\w\}$. By $X(\mathcal R)$ we denote the space $(X\cup \mathcal R,\tau)$. It is easy to check that the space $X(\mathcal R)$ is Hausdorff and zero-dimensional. 

The following result is proved by Bell in~\cite[Claim 3.2]{Bell}.

\begin{proposition}\label{Bell}
Let $X$ be a first-countable zero-dimensional space and $\B$ be a base of $X$ consisting of clopen sets.  Suppose that for every $T\in \D(\B)$ there exists $S\in\E(\B)$ such that $S$ refines $T$. Then for every
maximal eventually disjoint subfamily $\mathcal R$ of $\E(\B)$ the space $X(\mathcal R)$ is zero-dimensional, first countable, and pseudocompact.   
\end{proposition}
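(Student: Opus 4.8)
The plan is to verify the three asserted properties of $X(\mathcal R)$ in turn, where $\mathcal R$ is a maximal eventually disjoint subfamily of $\E(\B)$. Zero-dimensionality and first-countability are essentially built into the construction: the base $\B^*$ is indexed so that each point of $X$ has the countable local base $\{B^* : B \in \B^x\}$ (using first countability of $X$), while each $R \in \mathcal R$ has the explicit countable local base $\{U_n(R) : n \in \w\}$. To see that each $B^*$ and each $U_n(R)$ is clopen, I would check that the complement of a basic set is a union of basic sets; the key point is that if a point $R' \in \mathcal R$ fails to be eventually contained in $B$, then (by eventual disjointness and $\B$-exactness) some tail of $R'$ avoids $B$, so $R'$ has a neighborhood $U_m(R')$ missing $B^*$. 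This is the kind of routine-but-careful verification the proposition leaves implicit.

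The heart of the matter is \emph{pseudocompactness}, and here the maximality of $\mathcal R$ together with the refinement hypothesis does the work. Recall that for a first-countable space (indeed for any space) pseudocompactness is equivalent to feeble compactness: every locally finite family of nonempty open sets is finite. So I would take a countable sequence $\{V_k : k \in \w\}$ of nonempty basic open sets which I am trying to show cannot be locally finite, i.e. I want to produce an accumulation point. Passing to elements of the base, I may assume each $V_k$ contains a clopen $B_k \in \B$, and after discarding repetitions I obtain a sequence $T = \{B_k : k\in\w\}$ of elements of $\B$ whose union I want to witness as non-locally-finite. The hypothesis that every $T \in \D(\B)$ is refined by some $\B$-exact $S \in \E(\B)$ is exactly what lets me replace a potentially badly-behaved locally finite sequence by an element of $\E(\B)$; the intended argument is that a locally finite sequence of nonempty opens from which no finite subcover works yields, after refinement, a member of $\E(\B)$, and maximality of $\mathcal R$ forces this member to be eventually disjoint-incompatible with something already in $\mathcal R$, producing a point $R \in \mathcal R$ that accumulates at the sequence.

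Concretely, the main step I expect to be the obstacle is showing that a locally finite infinite family of basic open sets leads to a contradiction. The strategy is: from such a family extract (via the refinement hypothesis) a sequence $S \in \E(\B)$ that is genuinely locally finite and infinite. By maximality of $\mathcal R$, $S$ cannot be added to $\mathcal R$ while preserving eventual disjointness, so there is some $R \in \mathcal R$ that is \emph{not} eventually disjoint from $S$. This means infinitely many terms of $R$ meet infinitely many terms of $S$; using $\B$-exactness of both $R$ and $S$ one argues that $R$, regarded as a point of $X(\mathcal R)$, lies in every tail-neighborhood $U_n(\cdot)$ meeting the family, so $R$ is an accumulation point of the original family in $X(\mathcal R)$. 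That contradicts local finiteness at $R$, and feeble compactness follows. The delicate bookkeeping lies in chasing the definitions of ``eventually contained'', ``eventually disjoint'', and ``$\B$-exact'' simultaneously to pin down the accumulation point, which is why Bell's proof treats this as the substantive claim while relegating the topological separation properties to a remark.
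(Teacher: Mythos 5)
The paper contains no proof of this proposition to compare against: it is quoted directly from Bell (\cite[Claim 3.2]{Bell}), with only the remark beforehand that Hausdorffness and zero-dimensionality of $X(\mathcal R)$ are easy. Judged on its own, your argument is correct and is exactly the standard (Bell's) route: clopenness of the sets $B^*$ and $U_n(R)$ via $\B$-exactness gives zero-dimensionality and first-countability, and for pseudocompactness one assumes an infinite locally finite family of nonempty open sets, traces it to a locally finite sequence $T\in\D(\B)$ inside $X$, takes a $\B$-exact refinement $S\in\E(\B)$ from the hypothesis, uses maximality of $\mathcal R$ to find $R\in\mathcal R$ not eventually disjoint from $S$, and checks that every $U_j(R)$ meets infinitely many members of the original family (this last step needs the injectivity of the refinement map rather than $\B$-exactness of $R$, a harmless imprecision in your sketch), contradicting local finiteness at $R$. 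One small caveat: under the paper's definitions pseudocompactness presupposes Tychonoff, so the equivalence with feeble compactness is not ``for any space''; you need that $X(\mathcal R)$ is Hausdorff and zero-dimensional, hence Tychonoff, which your clopenness verification does supply.
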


\begin{proposition}\label{pseudocompact}
Each first-countable zero-dimensional Hausdorff space $X$ of weight $<\min\{\mathfrak{s},\mathfrak{b}\}$ embeds densely into a first-countable zero-dimensional pseudocompact space.    
\end{proposition}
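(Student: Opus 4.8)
The plan is to verify the two hypotheses of Proposition~\ref{Bell} for a suitable clopen base and then read off the conclusion. Since $X$ is zero-dimensional, Hausdorff and first-countable of weight $<\min\{\mathfrak s,\mathfrak b\}$, fix a base $\B$ of $X$ consisting of clopen sets with $|\B|<\min\{\mathfrak s,\mathfrak b\}$, and arrange that $\B=\bigcup_{x\in X}\{U^x_k:k\in\w\}$, where for each $x$ the sets $U^x_k$ form a nested clopen neighborhood base at $x$. Once the refinement hypothesis ``for every $T\in\D(\B)$ there is $S\in\E(\B)$ refining $T$'' is established, we choose (by Zorn's lemma) a maximal eventually disjoint subfamily $\mathcal R\subseteq\E(\B)$, and Proposition~\ref{Bell} immediately gives that $X(\mathcal R)$ is zero-dimensional, first-countable and pseudocompact.

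The heart of the argument is the refinement step. Let $T=\{T_n:n\in\w\}\in\D(\B)$; discarding empty terms we assume each $T_n\neq\emptyset$ and fix a point $x_n\in T_n$. Since $|\B|<\mathfrak s$, the family $\{\{n:x_n\in B\}:B\in\B\}$ is not splitting, so there is an infinite $E\subseteq\w$ such that for every $B\in\B$ the set $\{n\in E:x_n\in B\}$ is finite or cofinite in $E$; that is, along $E$ each $B$ eventually contains all $x_n$ or eventually excludes all $x_n$. For each $B\in\B$ define $f_B\in\w^E$ by letting $f_B(n)$ be the least $k$ with $U^{x_n}_k\subseteq T_n$ and ($U^{x_n}_k\subseteq B$ or $U^{x_n}_k\cap B=\emptyset$); this is well defined because $T_n$, $B$ and $X\setminus B$ are clopen and $x_n\in T_n$. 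As $|\B|<\mathfrak b$, there is $g\in\w^E$ with $g\geq^* f_B$ for every $B\in\B$. Put $S_n=U^{x_n}_{g(n)}$ for $n\in E$; then $S_n\in\B$, $x_n\in S_n\subseteq T_n$, and $S=\{S_n:n\in E\}$ refines $T$ via the increasing enumeration of $E$.

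It remains to check $S\in\E(\B)$. Local finiteness of $S$ follows from that of $T$, since $S_n\subseteq T_n$. For $\B$-exactness fix $B\in\B$. If along $E$ almost all $x_n\in B$, then for almost all $n\in E$ we have $g(n)\geq f_B(n)$, so $S_n=U^{x_n}_{g(n)}\subseteq B$, whence $\bigcup_{n\geq k}S_n\subseteq B$ for some $k$. If instead along $E$ almost all $x_n\notin B$, then for almost all $n\in E$ we get $S_n\cap B=\emptyset$, so $B$ meets only finitely many $S_n$ and the exactness requirement is vacuous. Thus $S$ is $\B$-exact, which verifies the hypothesis of Proposition~\ref{Bell}. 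Finally, $X$ carries its original topology in $X(\mathcal R)$ because $B^*\cap X=B$ for each $B\in\B$, and every nonempty basic open set of $X(\mathcal R)$ meets $X$ (indeed $B^*\cap X=B\neq\emptyset$ and $U_n(R)\cap X=\bigcup_{m\geq n}R_m\neq\emptyset$), so $X$ embeds densely.

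I expect the refinement step to be the main obstacle. The subtlety is to make a single sequence of chosen points $\{x_n\}$ simultaneously decide every member of $\B$, which is exactly where $|\B|<\mathfrak s$ enters, and then to shrink the sets $T_n$ \emph{uniformly} so that each $B$ is decided by the shrunken sets $S_n$ rather than merely by the points, which is where $|\B|<\mathfrak b$ enters. The two cardinal bounds are combined precisely to upgrade the pointwise dichotomy (``$x_n$ eventually in $B$ or eventually out of $B$'') to the setwise dichotomy (``$S_n$ eventually inside $B$ or eventually disjoint from $B$'') that defines $\B$-exactness; once this is in place, Proposition~\ref{Bell} and the routine density check finish the proof.
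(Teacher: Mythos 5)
Your proof is correct and follows essentially the same route as the paper: both verify Bell's refinement criterion (Proposition~\ref{Bell}) by fixing points $x_n\in T_n$, using $|\B|<\mathfrak b$ to shrink the $T_n$ uniformly to basic sets $U^{x_n}_{k}$, and using $|\B|<\mathfrak s$ to extract an infinite index set along which every $B\in\B$ either eventually contains or eventually misses the shrunken sets. The only difference is the order of the two steps --- the paper bounds first and then applies the splitting argument to the sets $C_B=\{n: U^{x_n}_{f(n)}\subseteq B\}$, whereas you split first on point-membership $\{n: x_n\in B\}$ and then bound along the resulting infinite set --- and this reordering is immaterial.
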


\begin{proof}
Fix a base $\B$ of $X$ of size $<\min\{\mathfrak{s},\mathfrak{b}\}$ consisting of clopen sets. We lose no generality assuming that $\B=\{U_n^x:x\in X, n\in\w\}$, where for every $x\in X$ the family $\{U_n^x:n\in\w\}$ forms a nested open neighborhood base at $x$. 
By Proposition~\ref{Bell} it is enough to check that for every $T\in \D(\B)$ there exists $S\in\E(\B)$ such that $S$ refines $T$. Consider an arbitrary sequence $T=\{T_n:n\in\w\}\in \D(\B)$ and for each $n\in\w$ fix a point $x_n\in T_n$. To each $B\in\B$ we assign a function $f_B\in\w^\w$ defined as follows: if $x_n\in B$, then put $f_B(n)=\min\{k\in \w: U_k^{x_n}\subset B\cap T_n\}$; otherwise put $f_B(n)=\min\{k\in \w: U_k^{x_n}\subset T_n$ and $U_k^{x_n}\cap B=\emptyset\}$. Since $|\B|<\mathfrak b$, there exists a function $f\in\w^\w$ such that $f\geq^* f_B$ for every $B\in\B$. Consider the refinement $\{U^{x_n}_{f(n)}:n\in\w\}$ of $T$. Clearly, for each $B\in \B$ there exists $k\in\w$ such that for every $n\geq k$ either $U^{x_n}_{f(n)}\cap B=\emptyset$ or $U^{x_n}_{f(n)}\subset B$. So, to each $B\in\B$ we can assign a set $C_B=\{n\in\w: U^{x_n}_{f(n)}\subset B\}\subset\w$. Since $|\B|<\mathfrak s$ we get that the family $\{C_B:B\in\B\}$ is not splitting. Hence there exists an infinite subset $A$ of $\w$ possessing the following property: for every $B\in \B$ there exists $k\in\w$ such that either $U^{x_n}_{f(n)}\subset B$ for each $n\in A\setminus k$  or  $U^{x_n}_{f(n)}\cap B=\emptyset$ for each $n\in A\setminus k$. Hence the sequence $S=\{U^{x_n}_{f(n)}:n\in A\}$ is a $\B$-exact refinement of $T$.      
\end{proof}

\section{Non-embedding theorems}
To each almost disjoint family $\mathcal A\subset [\w]^\w$ corresponds a {\em Mrowka space} $\psi(\mathcal A)=(\mathcal A\cup\w,\tau)$, where the topology $\tau$ is defined as follows: points of $\w$ are isolated and for each $A\in\mathcal A$ the family $\{\{A\}\cup (A\setminus n):n\in\w\}$ is an open neighborhood base at $A$ in $\tau$. We refer the reader to~\cite{hrusak} for basic properties of Mrowka spaces.

\begin{proposition}\label{b}
There exists an almost disjoint family $\mathcal A$ of cardinality $\mathfrak b$ such that the corresponding Mrowka space $\psi(\mathcal A)$ doesn't embed into first-countable Hausdorff countably compact spaces. 
\end{proposition}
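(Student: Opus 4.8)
The plan is to realize $\mathcal A$ as a family of ``columns'' and ``graphs'' over the grid $\w\times\w$ (identified with $\w$ via a fixed bijection, so that the isolated points of $\psi(\mathcal A)$ are exactly the grid points). For $n\in\w$ put $V_n=\{n\}\times\w$, and for $f\in\w^\w$ put $G_f=\{(n,f(n)):n\in\w\}$. I take $\mathcal A=\{V_n:n\in\w\}\cup\{G_{f_\alpha}:\alpha<\mathfrak b\}$, where $\langle f_\alpha:\alpha<\mathfrak b\rangle$ is a family of functions to be specified. Any $G_f$ meets every column $V_n$ in the single point $(n,f(n))$, so the graphs are automatically almost disjoint from all columns; thus the only thing to arrange for almost disjointness is that the $f_\alpha$ are pairwise eventually different. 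Granting this, $|\mathcal A|=\w+\mathfrak b=\mathfrak b$, and $\psi(\mathcal A)$ is the required Mrowka space.

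The functions will be chosen with two extra properties: each $f_\alpha$ is non-decreasing, and $\langle f_\alpha:\alpha<\mathfrak b\rangle$ is $<^*$-increasing and unbounded. Such a chain is built by recursion of length $\mathfrak b$: fix in advance an unbounded family $\{h_\gamma:\gamma<\mathfrak b\}$; at stage $\alpha$ the set $\{f_\beta:\beta<\alpha\}$ has size $<\mathfrak b$, hence is $\leq^*$-bounded by some $g_\alpha$, and I let $f_\alpha$ be the running maximum of $\max(g_\alpha,h_\alpha)+1$. Then $f_\alpha$ is non-decreasing and $>^*$ every earlier $f_\beta$ (so the $f_\alpha$ are pairwise eventually different, giving almost disjointness) and $>^* h_\alpha$; the latter forces $\{f_\alpha\}$ to be unbounded, since a bound for the $f_\alpha$ would bound every $h_\gamma$. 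The key point is the following monotone lemma: an unbounded family of non-decreasing functions is unbounded on every infinite set, i.e. $\{f_\alpha\uhr N\}$ is unbounded in $\w^N$ for each infinite $N\subseteq\w$. Indeed, if some $h\in\w^N$ bounded all $f_\alpha\uhr N$, then, writing $N=\{n_0<n_1<\cdots\}$ and setting $\tilde h(n)=h(n_j)$ for $n_{j-1}<n\le n_j$, monotonicity gives $f_\alpha(n)\le f_\alpha(n_j)\le h(n_j)=\tilde h(n)$ for all large $n$, so $\tilde h$ would bound $\{f_\alpha\}$ on $\w$, a contradiction.

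For the non-embedding argument, suppose toward a contradiction that $\psi(\mathcal A)$ embeds into a first-countable Hausdorff countably compact space $Z$; identify $\psi(\mathcal A)$ with its image. Since $\mathcal A$ is closed and discrete in $\psi(\mathcal A)$, no sequence of distinct points of $\mathcal A$ converges in $\psi(\mathcal A)$. The infinite set $\{V_n:n\in\w\}\subseteq Z$ has an accumulation point $z$ by countable compactness, and first-countability yields a subsequence $V_{n_k}\to z$ together with a decreasing neighbourhood base $\langle W_k:k\in\w\rangle$ of $z$ with $V_{n_k}\in W_k$. As $W_k\cap\psi(\mathcal A)$ is an open neighbourhood of $V_{n_k}$, it contains a basic neighbourhood of $V_{n_k}$; hence there is $m_k$ with $(n_k,j)\in W_k$ for all $j\ge m_k$. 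Define $g\in\w^\w$ by $g(n_k)=m_k$ and $g\equiv 0$ off $\{n_k:k\in\w\}$. By the monotone lemma applied to $N=\{n_k:k\in\w\}$ there is $\alpha<\mathfrak b$ with $f_\alpha(n_k)>m_k$ for infinitely many $k$; let $K$ be this infinite set.

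Finally I derive the contradiction. For $k\in K$ the grid point $(n_k,f_\alpha(n_k))$ lies in $W_k$, and since the $W_k$ form a decreasing base at $z$ the sequence $\langle(n_k,f_\alpha(n_k)):k\in K\rangle$ converges to $z$ in $Z$. But these points all lie on $G_{f_\alpha}$ and have first coordinates tending to infinity, so the same sequence converges to the point $G_{f_\alpha}$ of $\psi(\mathcal A)$, hence to $G_{f_\alpha}$ in $Z$. By Hausdorffness $z=G_{f_\alpha}\in\mathcal A$. Then $V_{n_k}\to z=G_{f_\alpha}$ is a sequence of distinct points of $\mathcal A$ converging in $\psi(\mathcal A)$ to a point of $\mathcal A$, contradicting the closed discreteness of $\mathcal A$. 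The main obstacle, and the reason the bound is $\mathfrak b$ rather than $\mathfrak d$, is precisely the monotone lemma: it is what lets an unbounded (size $\mathfrak b$) family do the work of escaping the thresholds $m_k$ on the arbitrary infinite index set $\{n_k:k\in\w\}$ produced by the embedding.
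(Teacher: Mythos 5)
Your proposal is correct and follows essentially the same route as the paper: the paper also takes $\mathcal A$ to consist of the vertical columns together with the graphs of a $\leq^*$-increasing unbounded family of increasing functions, uses the same key fact that such a monotone unbounded family remains unbounded when restricted to any infinite set of columns, and derives the contradiction from an accumulation point of the columns whose neighborhoods must capture infinitely many points of some graph. The only differences are cosmetic (you ensure almost disjointness via eventual difference rather than almost disjoint ranges, collapse the thresholds into one function instead of taking a supremum of countably many indices, and phrase the final contradiction via uniqueness of sequential limits rather than $b_\mu\in\overline{U_n}$ for all $n$).
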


\begin{proof}
Fix an unbounded subset $\{b_{\alpha}:\alpha\in\mathfrak b\}\subset \w^\w$ which satisfies the following conditions: 
\begin{enumerate}
    \item the function $b_{\alpha}$ is increasing for every $\alpha\in\mathfrak b$;
    \item $b_\alpha <^* b_{\beta}$ whenever $\alpha\in\beta$.
\end{enumerate}

For $i\in\w$ set $v_i=\{(i,n):n\in\w\}$.  Let $\mathcal A\defeq\{b_{\alpha}:\alpha\in\mathfrak b\}\cup\{v_i:i\in\w\}$. Condition (2) implies that $\mathcal A$ is an almost disjoint family of subsets of $\w{\times}\w$. 
To derive a contradiction, assume that there exists a first-countable Hausdorff countably compact space $X$ which contains the Mrowka space $\psi(\mathcal A)$ as a subspace. 
 Find an infinite subset $C$ of $\omega$ such that the sequence $\{v_{n}:n\in C\}$ converges to a point $x\in X$. Fix an open neighborhood base $\{U_n:n\in \w\}$ at $x$. Without loss of generality we can assume that $\{v_k:k\in C\setminus n\}\subset U_n$ for all $n\in\w$. 
 Observe that for each $n\in \w$ there exists a function $f_n\in \w^C$ such that $\{(k,m): k\in C\setminus n, m\geq f_n(k)\}\subset U_n$. Since the functions $b_{\alpha}$, $\alpha\in\mathfrak b$ are increasing, the family $\{b_\alpha{\restriction}_C
:\alpha\in\mathfrak b\}$ is unbounded. Then for each $n\in\w$ there exists $\alpha_n\in \mathfrak b$ such that $f_n\not\geq^* b_{\alpha_n}{\restriction}_C$. It follows that  for each $n\in\w$ and $\xi\geq \alpha_n$,
$$|\{(k,m): k\in C\setminus n, m\geq f_n(k)\}\cap b_{\xi}|=\omega.$$ Thus for $\mu= \sup\{\alpha_n:n\in \w\}$ the set $b_{\mu}\cap \{(k,m): k\in C\setminus n, m\geq f_n(k)\}$ is infinite for all $n\in\w$.  Hence $b_{\mu}\in \overline{U_n}$ for all $n\in \w$, which contradicts the Hausdorffness  of $X$.
\end{proof}

The following result is essentially based on the work of van Douwen and Przymusi\'nski~\cite{DP}. Throughout its proof $\mathbb N=\w\setminus \{0\}$. 

\begin{proposition}\label{De}
There exists a separable first-countable normal Lindel\"of space of cardinality $\mathfrak s$ which cannot be densely embedded into a first-countable regular feebly compact space.  
\end{proposition}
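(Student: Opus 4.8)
The plan is to produce the space from a splitting family of least size and to isolate a single combinatorial property of it that is incompatible with any first-countable feebly compact extension. Fix a splitting family $\mathcal S=\{S_\alpha:\alpha\in\mathfrak s\}$ of cardinality $\mathfrak s$ (such a family exists by the definition of $\mathfrak s$), and, closing under complements, assume $\w\setminus S_\alpha$ also occurs in $\mathcal S$. Following the ideas of van Douwen and Przymusi\'nski~\cite{DP}, I would build a space $X$ whose underlying set is $\w\cup P$, where $\w$ is a countable set of isolated points (so $X$ is automatically separable and first countable there) and $P$ is a set of $\mathfrak s$ many non-isolated points carrying the topology of a second-countable metric space; the attaching assignment is to be chosen so that each infinite subset of $\w$ which is split by some $S_\gamma$ accumulates at (at least) two distinct points of $P$. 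Concretely, the design goal is the property
\[(\star)\qquad\text{every infinite }A\subseteq\w\text{ has at least two cluster points in }X,\]
which should follow from splitting: if $S_\gamma$ splits $A$, then $A\cap S_\gamma$ and $A\setminus S_\gamma$ are both infinite and are arranged to accumulate at two different points. Since $P$ is second countable it is Lindel\"of, and as $\w$ is open this will make $X$ Lindel\"of; being regular and Lindel\"of, $X$ is then normal, while $|X|=\aleph_0+\mathfrak s=\mathfrak s$.

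The non-embedding half is the clean part and I would carry it out as follows. Suppose, for contradiction, that $X$ embeds densely in a first-countable regular feebly compact space $Y$; then the dense set $\w$ of isolated points of $X$ is dense in $Y$. For each $n\in\w$ choose open $O_n\subseteq Y$ with $O_n\cap X=\{n\}$; since the singletons are pairwise disjoint and $X$ is dense in $Y$, the $O_n$ are pairwise disjoint nonempty open subsets of $Y$. By feeble compactness this infinite family is not locally finite, so it has a cluster point $y\in Y$, and first countability of $Y$ at $y$ lets me extract a strictly increasing sequence $(n_k)$ and a decreasing neighbourhood base $(V_k)$ at $y$ with $O_{n_k}\cap V_k\neq\emptyset$; density forces $n_k\in V_k$, so the infinite set $D'=\{n_k:k\in\w\}\subseteq\w$ converges to $y$ in $Y$. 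A convergent sequence in a Hausdorff space has a unique cluster point, so $y$ is the only cluster point of $D'$ in $Y$, hence the only one in $X\subseteq Y$. This contradicts $(\star)$ applied to the infinite set $D'$, and the contradiction completes the argument.

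The main obstacle is entirely in the construction of $X$, namely reconciling $(\star)$ with regularity and Lindel\"ofness. Lindel\"ofness forbids the $\mathfrak s$ points of $P$ from forming a closed discrete subspace, so they must cluster among themselves; but a splitting family has infinite pairwise intersections, so if one attached $p_\alpha$ by simply declaring its neighbourhood traces on $\w$ to be the tails of $S_\alpha$, then points $p_\alpha,p_\beta$ with $|S_\alpha\cap S_\beta|=\w$ could not be separated, destroying Hausdorffness. The point of the van Douwen--Przymusi\'nski construction is to break this deadlock: one refines the carrier of the isolated points (for instance along a tree or a matrix, or through an almost disjoint refinement of the traces) so that distinct limit points become topologically separated while every infinite subset of $\w$ still accumulates on two sides. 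Verifying that the resulting neighbourhood assignment is a base for a regular topology, and that the subspace $P$ is second countable (so that $X$ is Lindel\"of), is the tedious technical heart of the proof; separability, first countability, and the bound $|X|=\mathfrak s$ are then immediate, and normality follows from regularity together with Lindel\"ofness.
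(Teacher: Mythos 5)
Your non-embedding half is internally sound, but it rests entirely on the design property $(\star)$, and $(\star)$ is not merely hard to arrange --- it is \emph{self-contradictory} for the kind of space you need. Indeed, suppose $X$ is Hausdorff and first countable and every infinite $A\subseteq\omega$ has at least one accumulation point in $X$. Pick such an $A$ and an accumulation point $p$; since $X$ is $T_1$, every neighbourhood of $p$ meets $A$ in an infinite set, so using a countable decreasing base at $p$ you can extract an infinite $A_1\subseteq A$ which, enumerated suitably, converges to $p$. But a set that converges to $p$ in a Hausdorff space has $p$ as its \emph{unique} accumulation point: any $z\neq p$ has a neighbourhood disjoint from a neighbourhood of $p$ containing all but finitely many points of $A_1$. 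So $A_1$ is an infinite subset of $\omega$ with exactly one cluster point, and $(\star)$ fails for $A_1$. Thus no first-countable regular space whatsoever satisfies $(\star)$, and the ``tedious technical heart'' you defer (reconciling $(\star)$ with regularity and Lindel\"ofness) cannot be carried out; the obstacle is fundamental, not technical. This also explains why your contradiction came so cheaply: the hypothesis you would need is vacuous.

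The paper's proof shows what the correct, achievable weakening of $(\star)$ is, and why the feeble-compactness argument must then be longer. In the van Douwen--Przymusi\'nski space the isolated points form a grid $\mathbb{Q}\times\mathbb{N}$, and the splitting family yields only a \emph{column-wise} property of any regular first-countable extension $Z$ (Claim~\ref{crucial}): for every infinite $B\subseteq\mathbb{N}$ there exists \emph{some} rational $q$ such that the single column $\{q\}\times B$ does not converge in $Z$ --- the terms indexed by $B\cap S$ are excluded from a closed neighbourhood (obtained via regularity of $Z$) that captures the terms indexed by $B\setminus S$. Because this property is far weaker than $(\star)$, extracting one convergent sequence of isolated points (which is exactly your argument, and is the first step of the paper's induction) gives no contradiction: that sequence may well live in columns where nothing goes wrong. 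Instead the paper runs an $\omega$-length induction, using feeble compactness and first countability of $Z$ at each step to produce nested infinite sets $A_0\supseteq A_1\supseteq\cdots$ of $\mathbb{N}$ such that the $i$-th column converges along $A_i$, and then takes a pseudointersection $A$ of the $A_i$; along $A$ \emph{every} column converges in $Z$, contradicting Claim~\ref{crucial} applied to $B=A$. If you want to salvage your write-up, replace $(\star)$ by this column-wise non-convergence property and replace your single extraction by the nested-sequence-plus-pseudointersection argument.
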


\begin{proof}
By $\mathbb Q$ we denote the set of rational numbers from the interval $[0,1]$.
Enumerate the set of all subsets of $\w$ as $\{A_x:x\in [0,1]\}$, and for each $x\in[0,1]$ and $m\in\N$ choose  $q_x(m)\in \mathbb Q$ such that $0<|x-q_x(m)|<1/m$ and put $Q_x=\{q_x(m):m\in\N\}$.
Let $X$ be the set $[0,1]{\times}\{0\}\cup \mathbb Q{\times}\mathbb{\N}$ endowed with a topology $\tau$ satisfying the following conditions:
\begin{itemize}
    \item $\mathbb Q{\times}\mathbb{\N}$ is the set of isolated points;
    \item if $x\in \mathbb Q$, then basic neighborhoods of a point $(x,0)\in[0,1]{\times}\{0\}$ have the form $$B_{m}(x)=\{(a,b)\in X: |x-a|<1/m\}\setminus (Q_x{\times}A_x\cup \{x\}{\times}\mathbb N) \qquad \hbox{for} \quad m\in\mathbb N;$$
    \item if $x\in [0,1]\setminus \mathbb Q$, then basic neighborhoods of a point $(x,0)\in[0,1]{\times}\{0\}$ have the form $$B_{m}(x)=\{(a,b)\in X: |x-a|<1/m\}\setminus (Q_x{\times}A_x) \qquad \hbox{for} \quad m\in\mathbb N.$$
\end{itemize}
Clearly, the space\footnote{This space was originally constructed by van Douwen and Przymusi\'nski~\cite{DP} in order to investigate remainders of \v{C}ech-Stone compactifications.} $X$ is regular, separable and first-countable.

Fix an arbitrary splitting family $\mathcal S\subset[\w]^\w$ of cardinality $\mathfrak s$ and consider the subspace $$Y=\{x\in[0,1]: A_x\in \mathcal S\}{\times}\{0\}\cup\mathbb Q{\times}\mathbb N$$ of $X$. It is clear that $Y$ is regular and first-countable. Taking into account that the subspace $([0,1]{\times}\{0\})\cap Y$ of $Y$ is second-countable, we deduce that $Y$ is separable, Lindel\"of and subsequently normal.

Let us show that $Y$ doesn't embed densely into a first-countable regular feebly compact space. Assuming the contrary, fix any first-countable regular feebly compact space $Z$ which contains $Y$ as a dense subspace.
The following claim is crucial.

\begin{claim}\label{crucial}
For each $B\in[\mathbb N]^{\w}$ there exists $q\in\mathbb Q$ such that $\{q\}{\times}B$ is not a convergent sequence in $Z$.
\end{claim}

\begin{proof}
 Fix an arbitrary infinite subset $B\subseteq\mathbb N$ and find $S\in \mathcal S$ that splits $B$. Let $x\in[0,1]$ be such that $A_x=S$. Fix an arbitrary open neighborhood $W$ of $(x,0)$ in $Z$ such that $W\cap Y\subseteq B_2(x)$. By the regularity of $Z$ there exists an open neighborhood $V$ of $(x,0)$ in $Z$ such that $\cl_Z(V)\subseteq W$. Find $m\in\mathbb N$ such that $B_m(x)\subset V$. Assume that the set $\{q_{x}(m)\}{\times} B$ converges to some $b\in Z$. Then $b\in \cl_Z(V)\subseteq W$. Recall that $W\cap Y\subseteq B_2(x)$ and  $B_2(x)\cap (\{q_x(m)\}{\times}\N)= \{q_x(m)\}{\times}(\N\setminus S)$. Since the set $B\cap S$ is infinite, the open set $W$ contains no open neighborhood of $b$ which contradicts the choice of $W$.
\end{proof}

Fix any enumeration $\mathbb Q=\{q_n:n\in\w\}$. Since the set $Q_0\defeq \{q_0\}{\times}\N$ is open and discrete in a feebly compact space $Z$, there exists an accumulation point $z_0$ of $Q_0$. Since $Z$ is first-countable, there exists $A_0\subseteq \N$ such that the sequence $\{(q_0,n):n\in A_0\}$ converges to $z_0$.
Assume that for each $i\leq n$ we constructed a subset $A_i$ of $\N$ satisfying:
\begin{enumerate}
    \item $A_i\subset A_j$ for every $i\leq j$;
    \item the sequence $\{(q_i,n):n\in A_i\}$ converges to some $z_i\in Z$.
\end{enumerate}
Consider the discrete open set $Q_{n+1}\defeq \{q_{n+1}\}{\times}A_n$. Since $Z$ is first-countable and feebly compact, there exists an infinite subset $A_{n+1}\subset A_n$ such that the sequence $\{(q_{n+1},n):n\in A_{n+1}\}$ converges to some point $z_{n+1}\in Z$. 

After completing the induction we obtain a decreasing chain $\{A_i:i\in\w\}$ of infinite subsets of $\N$ and a sequence of points $\{z_i:i\in\w\}$ such that $z_i$ is the limit of the sequence $\{(q_i,n):n\in A_i\}$. Let $A$ be any pseudointersection of the family $\{A_i:i\in\w\}$, i.e. $A\subseteq^* A_i$ for all $i\in\w$. It is easy to see, that the sequence $\{(q_i,n):n\in A\}$ converges to $z_i$ for each $i\in\w$. But this contradicts Claim~\ref{crucial}. 
\end{proof}

The following trivial fact will be useful in the next proposition:

\begin{lemma}\label{trivial}
Suppose that $A\cup B$ is a $G_{\delta}$ subset of a Hausdorff space $X$. If $B$ is countable, then $A$ is $G_{\delta}$. 
\end{lemma}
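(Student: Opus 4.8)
The plan is to realize $A$ as the intersection of the given $G_\delta$ set $A\cup B$ with a second $G_\delta$ set obtained by deleting countably many points. The key elementary observation is the set-theoretic identity
$$A=(A\cup B)\cap\big(X\setminus(B\setminus A)\big).$$
Indeed, a point lies in the right-hand side precisely when it belongs to $A\cup B$ but not to $B\setminus A$; such a point is either in $A$ (and then certainly in the right-hand side) or in $B\setminus A$ (and then excluded), so the two sides coincide.

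Next I would exploit the hypotheses to show each of the two factors is $G_\delta$. By assumption $A\cup B=\bigcap_{n\in\w}U_n$ for suitable open sets $U_n\subseteq X$. For the second factor, the crucial point is that $X$ is Hausdorff, hence $T_1$, so that every singleton $\{b\}$ is closed and therefore $X\setminus\{b\}$ is open. Since $B$ is countable, its subset $B\setminus A$ is countable as well, say $B\setminus A=\{b_k:k\in\w\}$ (allowing repetitions or a finite list), and consequently
$$X\setminus(B\setminus A)=\bigcap_{k\in\w}\big(X\setminus\{b_k\}\big)$$
is a countable intersection of open sets, i.e.\ a $G_\delta$ subset of $X$.

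Combining the two, I would conclude $A=\bigcap_{n\in\w}U_n\cap\bigcap_{k\in\w}\big(X\setminus\{b_k\}\big)$, a countable intersection of open sets, so $A$ is $G_\delta$, as required. There is no genuine obstacle here; the only hypothesis that must be used (beyond countability of $B$) is the $T_1$ separation supplied by Hausdorffness, which is exactly what makes the deleted points contribute open complements. I would simply take care to note that $B\setminus A$, being contained in the countable set $B$, is itself countable, which is what guarantees the second intersection is taken over countably many indices.
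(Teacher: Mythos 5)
Your proof is correct and is essentially the paper's own argument: both rest on the identity $A=(A\cup B)\cap\big(X\setminus(B\setminus A)\big)$ and on the fact that Hausdorffness (via $T_1$) makes each deleted singleton's complement open; the paper merely merges the two countable intersections into one by setting $V_n=U_n\setminus\{b_n\}$, which is a cosmetic difference.
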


\begin{proof}
Let $B\setminus A=\{b_n:n\in\w\}$ and $A\cup B=\bigcap_{n\in\w}U_n$, where $U_n$ is open for each $n\in\w$. Then the set $V_n\defeq U_n\setminus \{b_n\}$ is open for each $n\in\w$, and $\bigcap_{n\in\w} V_n=A$, i.e $A$ is $G_{\delta}$.
\end{proof}


For each $i\in\w$ let $\w^{\leq i}=\bigcup_{n\leq i}\w^n$ and $\w^{<i}=\bigcup_{n<i}\w^n$. Clearly, the sets $\w^{\leq i}$ and $\w^{<i}$ ordered by the inclusion are subtrees of the complete $\w$-ary tree.
Recall that a subspace $A$ of the Cantor space is called a {\em $\lambda$-set}  if each countable subset of $A$ is $G_{\delta}$. For the definition of a Pixley-Roy hyperspace see Section~\ref{prelim}.
The proof of the next result uses the ideas of Bell~\cite{Bell}.
\begin{proposition}\label{bpseudo}
Let $X$ be a subspace of the Cantor space which is not a $\lambda$-set. Then the Pixley-Roy hyperspace $\PR(X)$ is a zero-dimensional Tychonoff first-countable space that doesn't embed densely into first-countable regular feebly-compact spaces.     
\end{proposition}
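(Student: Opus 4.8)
The first, routine, assertion---that $\PR(X)$ is a zero-dimensional Tychonoff first-countable space---follows at once from the facts recorded in Section~\ref{prelim}: as a subspace of the Cantor space, $X$ is zero-dimensional, Hausdorff and first-countable, so $\PR(X)$ is Hausdorff, zero-dimensional (hence, having a clopen base, Tychonoff) and first-countable. The substance of the statement is the non-embedding, which I would prove by contradiction. So suppose $\PR(X)$ is densely embedded into a first-countable regular feebly compact space $Z$, and fix a clopen base $\B$ of $\PR(X)$. Since $X$ is not a $\lambda$-set, fix a countable $C=\{c_n:n\in\w\}\subseteq X$ that is not $G_\delta$ in $X$. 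As closed subsets of a metric space are $G_\delta$, the set $C$ is not closed; moreover the isolated points of $X$ lying in $C$ form an open subset $I$, and since intersecting a $G_\delta$-presentation with the open set $I$ would exhibit $C$ as $G_\delta$ whenever $C\setminus I$ is $G_\delta$, the set $C\setminus I$ is still not $G_\delta$ (in particular infinite). Replacing $C$ by $C\setminus I$, I may assume that $C$ is infinite and consists of non-isolated points of $X$.

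The engine of the proof is the extraction principle already used in the proof of Proposition~\ref{De}. First note the transfer afforded by density: if $\{O_k:k\in\w\}$ are pairwise disjoint nonempty open subsets of $\PR(X)$ and $\tilde O_k$ is any $Z$-open set with $\tilde O_k\cap \PR(X)=O_k$, then the $\tilde O_k$ are automatically pairwise disjoint, because $\tilde O_i\cap\tilde O_j$ is $Z$-open with empty trace on the dense set $\PR(X)$, hence empty; moreover every nonempty $Z$-open set meets $\PR(X)$. Consequently, feeble compactness of $Z$ supplies a point $z\in Z$ every neighbourhood of which meets infinitely many $\tilde O_k$, and first countability of $Z$ then yields points $w_j\in O_{k_j}\subseteq\PR(X)$ (with the indices $k_j$ distinct) such that $w_j\to z$ in $Z$.

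The plan is to apply this to a family of clopen cells of $\PR(X)$ that \emph{encodes} $C$, and to read a $G_\delta$-presentation of $C$ off the limit point $z$. In Bell's language (Proposition~\ref{Bell}), I would build a locally finite sequence $T=\{T_k\}\in\D(\B)$ with the property that every $\B$-exact refinement $S=\{S_j\}\in\E(\B)$ of $T$ decodes into a decreasing sequence of open subsets of $X$ whose intersection is exactly $C$ (up to a countable set); the cells being built from pairs $\{a,c_n\}$ converging to the singletons $\{c_n\}$ and from the clopen balls of the Cantor space shrinking to the $c_n$, so that $\B$-exactness translates precisely into the separation of $C$ from the remaining points of $X$. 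Since $C$ is not $G_\delta$, no such refinement can exist. On the other hand, applying the extraction principle to a disjoint refinement of $T$ produces an accumulation point $z$; fixing a decreasing $Z$-neighbourhood base $\{V_j\}$ at $z$ and using regularity to arrange $\cl_Z(V_{j+1})\subseteq V_j$---exactly as in the proof of Claim~\ref{crucial}---the traces $V_j\cap\PR(X)$ single out a refinement of $T$ which the closed-neighbourhood nesting forces to be $\B$-exact. Thus feeble compactness manufactures the very $\B$-exact refinement that the failure of $G_\delta$-ness forbids.

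The contradiction is then immediate, but its two ingredients are the delicate points, and the harder one is the last: showing that the neighbourhood traces at $z$ genuinely give a $\B$-exact refinement, i.e. that mere accumulation is upgraded to the exactness condition that any $B\in\B$ meeting infinitely many $S_j$ must contain a tail $\bigcup_{j\geq J}S_j$. This is where regularity (closed neighbourhoods of $z$) is indispensable, and where one must check that the finitely many ``stem'' points of the cells and the point $\emptyset\in\PR(X)$ do not corrupt the count---any such countable discrepancy being absorbed at the end by Lemma~\ref{trivial}, which lets one pass from ``$C\cup B$ is $G_\delta$'' with $B$ countable to ``$C$ is $G_\delta$''. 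Carrying out the encoding of the previous paragraph so that exactness corresponds precisely to an intersection equal to $C$ is the main technical obstacle; once it is arranged correctly, feeble compactness of $Z$ forces $C$ to be $G_\delta$ in $X$, contradicting the choice of $C$ and proving that $\PR(X)$ admits no dense embedding into a first-countable regular feebly compact space.
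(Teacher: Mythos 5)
Your preparatory material is sound: the routine properties of $\PR(X)$, the reduction of $C$ to an infinite non-$G_\delta$ set of non-isolated points, and the extraction principle (disjoint open traces on the dense copy of $\PR(X)$ extend to disjoint open subsets of $Z$, feeble compactness yields a point $z$ every neighbourhood of which meets infinitely many of them, and first countability yields a convergent selection) are all correct and match the way Proposition~\ref{De} is used in the paper. The fatal problem is the step where you claim that ``the closed-neighbourhood nesting forces'' the selected refinement to be $\B$-exact. Exactness is a dichotomy that must hold for \emph{every} $B\in\B$, and $\B$ is uncountable (since $X$ is not a $\lambda$-set, $|X|>\w$, and any base of $\PR(X)$ has size at least $|X|$). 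If you choose $S_j\subseteq V_j\cap T_{k_j}$ with $\cl_Z(V_{j+1})\subseteq V_j$, then for a $B\in\B$ meeting infinitely many $S_j$ all the nesting gives you is $z\in\cl_Z(B)$; but $z$ may simultaneously lie in $\cl_Z(\PR(X)\setminus B)$, i.e.\ on the $Z$-boundary of $\Int_Z(\cl_Z(B))$, in which case the $S_j$ can meet $B$ and its complement alternately, so that $B$ meets infinitely many $S_j$ yet contains no tail of them. Regularity does not exclude this: in Claim~\ref{crucial} the nested closed neighbourhoods are used only to show that one specific sequence fails to converge, never to decide an exactness dichotomy against uncountably many clopen sets at a single cluster point. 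Since your entire contradiction is supposed to come from manufacturing a $\B$-exact refinement, this gap is fatal as the argument stands.

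The other pillar --- a single locally finite sequence $T\in\D(\B)$ all of whose $\B$-exact refinements decode a $G_\delta$-presentation of $C$ --- you explicitly defer as ``the main technical obstacle'', so the proposal is an outline rather than a proof, and the paper's argument shows why a two-stage architecture (first encode, then extract) is unlikely to be repairable. The actual proof never passes through exactness at all: it builds a tree $\mathbb{T}\subseteq\w^{<\w}$ of shrinking clopen cells $B_s$ covering a countable \emph{dense} set $Q\supseteq A$ (density is arranged by adjoining a countable dense $A'$, with Lemma~\ref{trivial} keeping $Q$ non-$G_\delta$); feeble compactness is applied at every node $s$ to the \emph{centered} family $\{[F_k^s,B_s]:k\in\w\}$, producing a cluster point $z_s$ of a filter base rather than an accumulation point of a disjoint family; and, crucially, the cells of the next level are chosen inside the trapped neighbourhood traces $W_k^{s{\restriction}j}$ coming from \emph{all} earlier levels, so the encoding and the extraction are interleaved rather than sequential. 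The endgame is also different from ``hence $C$ is $G_\delta$'': the tree yields a $G_\delta$ set $P\supseteq Q$ which must be uncountable (Lemma~\ref{trivial} again), one picks $p_\infty\in P$ avoiding the countably many finite sets $G_k^s$, and regularity is invoked exactly once, at $p_\infty$, to force some $G_k^s$ to contain $p_\infty$ --- the desired contradiction. Your sketch contains no counterpart of this interleaving or of the uncountability/avoidance argument, and without them neither of your two key claims is established.
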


\begin{proof}
Let $A$ be a countable infinite subset of $X$ which is not $G_\delta$.  
Seeking a contradiction, assume that $\PR(X)$ embeds densely into a regular first-countable feebly compact space $Z$. 
Fix any countable dense subset $A'$ of $X$. Lemma~\ref{trivial} implies that $Q=A\cup A'$ is not $G_{\delta}$. 
Fix an increasing sequence $\{F^{\emptyset}_k:k\in\w\}$ of finite subsets of $Q$ such that $\bigcup_{k\in\w}F^{\emptyset}_k=Q$. Let $B_{\emptyset}\defeq X$.  Consider the family $\{\cl_Z[F_k^{\emptyset}, X]: k\in\w\}$. Taking into account that $\PR(X)$ is dense in $Z$ and $[F_k^{\emptyset}, B_{\emptyset}]$ is clopen in $\PR(X)$, it is easy to check that $[F_k^{\emptyset}, B_{\emptyset}]\subset \Int_Z(\cl_Z[F_k^{\emptyset}, B_{\emptyset}])$ for all $k\in\w$. So, the family $\{\Int_Z(\cl_Z[F_k^{\emptyset}, B_{\emptyset}]):k\in\w\}$ consists of open subsets of $Z$ and is centered. By feeble compactness of $Z$, there exists 
$$z_\emptyset\in \bigcap_{k\in\w}\cl_Z(\Int_Z(\cl_Z[F_k^{\emptyset}, B_{\emptyset}]))=\bigcap_{k\in\w}\cl_Z[F_k^{\emptyset}, B_{\emptyset}].$$ 
Fix a countable open neighborhood base $\{O^{\emptyset}_k:k\in\w\}$ at $z_{\emptyset}$. Since $\PR(X)$ is dense in $Z$, for each $k\in\w$ there exists a basic clopen subset $[G_k^{\emptyset}, W_k^{\emptyset}]$ of $\PR(X)$ such that $[G_k^{\emptyset}, W_k^{\emptyset}]\subseteq O_k^{\emptyset}\cap [F_k^{\emptyset}, B_{\emptyset}]$.
Clearly, $F_{k}^{\emptyset}\subseteq G_{k}^{\emptyset}$ and the sequence $\{G_{k}^{\emptyset}:k\in\w\}\subset \PR(X)$ converges to $z_{\emptyset}$. 

Suppose that for some $i\geq 0$ we accomplished the following things:
\begin{enumerate}
\item constructed a subtree $\mathbb T_i$ of $\w^{\leq i}$;
\item constructed a family $\{B_s:s\in \mathbb T_i\}$ of clopen subsets of $X$ satisfying the following conditions:
\begin{itemize}
\item[(2.1)] for every $s\in \mathbb T_i\cap \w^{<i}$, $Q_s\defeq Q\cap B_s\subseteq \bigcup_{n\in \beta_s}B_{s\hat{~}n}\subseteq B_s$, where $\beta_s\defeq \{n\in\w: s\hat{~}n\in\mathbb T_i\}\in\w+1$;
\item[(2.2)] $B_{s\hat{~}n}\cap B_{s\hat{~}m}=\emptyset$ for every $s\in  \mathbb T_i\cap \w^{<i}$ and distinct $n,m\in \beta_s$;
\item[(2.3)] for each $s\in\mathbb T_i$ the diameter of $B_s$ doesn't exceed $1/|s|$. 
\end{itemize}
\item $\forall s\in \mathbb T_i$ fixed an increasing sequence $\{F_k^s:k\in\w\}$ of finite subsets of $Q_s\defeq B_s\cap Q$ such that $\bigcup_{k\in\w}F_k^s= Q_s$;
\item $\forall s\in \mathbb T_i$ fixed a point $z_s\in \bigcap_{k\in\w}\cl_Z[F_k^s,B_s]$ and a nested open neighborhood base $\{O_k^s:k\in\w\}$ at $z_s$;
\item $\forall s\in \mathbb T_i$, $\forall k\in\w$ fixed a subset $[G^s_k,W_k^s]\subset O_k^s\cap [F_k^s,B_s]$ such that the following condition holds:
\begin{itemize}
\item[(5.1)] $\forall s\in\w^{<i}\cap \mathbb T_i$, $\forall n\in \beta_s$, $\exists k(s,n)\geq |s|+1$ such that $B_{s\hat{~}n}\subseteq \bigcap_{j\leq |s|} W_{k(s,n)}^{s{\restriction}j}$.
\end{itemize}
\end{enumerate}

Fix any $s\in \mathbb T_i\cap \w^{i}$. Since $B_s$ is open in $X$ and $Q$ is dense, the set $Q_s\defeq Q\cap B_s$ is not empty. Note that since $X$ could contain isolated points, it is possible that $|Q_s|<\w$. So, let us fix an enumeration $Q_s=\{q_n^s:n\in \alpha_s\}$, where $\alpha_s\in\w+1$. Observe that $q^s_0\in Q\cap B_{s{\restriction}j}$ for all $j\leq |s|$. Since  $\{F_k^{s{\restriction}j}: k\in\w\}$ is an increasing sequence of finite subsets of $Q\cap B_{s{\restriction}j}$, there exists $n_j\in\w$ such that $q^s_0\in F_{n}^{s{\restriction}j}$ for all $n\geq n_j$. Put $k(s,0)=\max\{n_j:j\leq |s|\}+|s|+1$ and note that the open set $\bigcap_{j\leq |s|} W_{k(s,0)}^{s{\restriction}j}$ contains $q^s_0$. So fix an arbitrary clopen set $B_{s\hat{~}0}\subseteq \bigcap_{j\leq |s|} W_{k(s,0)}^{s{\restriction}j}$ which contains $q_0^s$ and has diameter $\leq 1/(i+1)$. By condition (5), we get that $W_{k(s,0)}^s\subseteq B_{s}$ which implies that $B_{s\hat{~}0}\subseteq B_s$. If $Q_s\subset B_{s\hat{~}0}$, we put $\beta_s=1$ and move to another $s\in  \mathbb T_i\cap \w^{i}$. If $Q_s\setminus B_{s\hat{~}0}\neq \emptyset$, find minimal $m\in\w$ such that $q_m^s\notin B_{s\hat{~}0}$. Similarly as above there exists $k(s,1)\geq |s|+1$ such that the open set $\bigcap_{j\leq |s|} W_{k(s,1)}^{s{\restriction}j}$ contains $q^s_m$. Since the set $B_{s\hat{~}0}$ is clopen and doesn't contain $q_m^s$, we can find a clopen neighborhood $B_{s\hat{~}1}$ of $q_m^s$ of diameter $\leq 1/(i+1)$ which is contained in $(B_s\setminus B_{s\hat{~}0})\cap \bigcap_{j\leq |s|} W_{k(s,1)}^{s{\restriction}j}$. 
If $Q_s\subset B_{s\hat{~}0}\cup B_{s\hat{~}1}$, we put $\beta_s=2$ and move to another $s\in \mathbb T_i\cap \w^i$. Otherwise, we can find minimal $m\in\w$ such that $q_m^s\in B_s\setminus (B_{s\hat{~}0}\cup B_{s\hat{~}1})$ and repeat the previous steps. Proceeding this way we shall construct
a family $\{B_{s\hat{~}k}:k\in\beta_s\in\w+1\}$ of clopen subsets of  $X$ which satisfies conditions (2.1), (2.2), (2.3) and (5.1). Note that $\beta_s\leq \alpha_s$ can be finite. Let $\mathbb T^s_{i+1}=\mathbb T_i\cup \{s\hat{~}k:k\in\beta_s\}$.

After making the above construction for all $s\in \mathbb T_i\cap \w^{i}$, we obtain a tree $\mathbb T_{i+1}=\bigcup_{s\in  \mathbb T_i\cap \w^{i}}\mathbb T_{i+1}^s$ and a family $\{B_{s\hat{~}k}:k\in\beta_s \hbox{ and } s\in  \mathbb T_i\cap \w^{i}\}$ of clopen subsets of  $X$ which satisfies conditions (2.1), (2.2),  (2.3) and (5.1).

For each $s\in \mathbb T_{i+1}\cap \w^{i+1}$ fix an increasing sequence $\{F_k^s:k\in\w\}$ of finite subsets of $Q_s$ such that $\bigcup_{k\in\w}F_k^s=Q_s$. Analogous arguments to those we used to find $z_\emptyset$ imply that for each $s\in \mathbb T_{i+1}\cap \w^{i+1}$ there exists a point $z_s\in \bigcap_{k\in\w}\cl_Z[F_k^s,B_s]$. By the first-countability of $Z$, we can fix a nested open neighborhood base $\{O_k^s:k\in\w\}$ at $z_s$. For each $s\in \mathbb T_{i+1}\cap \w^{i+1}$ and $k\in\w$ fix a  subset $[G^s_k,W_k^s]\subset O_k^s\cap [F_k^s,B_s]$. This finishes step $i+1$ of the induction. 

Upon completing the induction we obtain a tree $\mathbb T=\bigcup_{i\in\w}\mathbb T_i$, and a family $\{B_s:s\in\mathbb T\}$ of clopen subsets of $X$ which satisfies conditions (2.1), (2.2), (2.3) and (5.1). Also, for each $s\in\mathbb T$ we found an element $z_s\in Z$ which satisfies condition (4) and constructed the families $\{W_k^s: k\in\w, s\in \mathbb T\}$ (consisting of open subsets of $X$) and $\{G_k^s: k\in\w, s\in \mathbb T\}$ (consisting of finite subsets of $X$), which satisfy conditions (5) and (5.1).

Let $$P=\bigcap_{i\in\w}\bigcup_{s\in\w^i\cap \mathbb T}B_s.$$ By the construction, $P$ is a $G_{\delta}$-set containing $Q$. It follows that  $|P|>\w$. Indeed, otherwise we would have that 
$Q=P\cap\bigcap_{x\in P\setminus Q}(X\setminus\{x\})$ is a $G_\delta$ subset of $X$, which is not the case.
Thus, we can fix  
\begin{equation}\label{eqqq}
 p_{\infty}\in P\setminus \bigcup_{s\in \mathbb T}\bigcup_{k\in\w}G_k^s.   
\end{equation} 
It is easy to see that there exists $t\in\w^\w$ such that $p_{\infty}\in B_{t{\restriction}i}$ for any $i\in\w$. Note that since the diameter of $B_{t{\restriction}i}$ doesn't exceed $1/i$, we get that the family $\{B_{t{\restriction}i}:i\in\w\}$ forms an open neighborhood base at $p_{\infty}$ in $X$. It follows that the family $\{[\{p_{\infty}\},B_{t{\restriction}i}]:i\in\w\}$ forms an open neighborhood base of $\{p_{\infty}\}$ in $\PR(X)$.  Put $R=\Int_Z(\cl_Z([\{p_{\infty}\}, X]))$. One can check that $R\cap \PR(X)=[\{p_{\infty}\}, X]$. By the regularity of $Z$, there exists an open in $Z$ neighborhood $W$ of $\{p_{\infty}\}$ such that $\cl_Z(W)\subseteq R$. Thus, there exists $i\in\w$ such that  $\cl_Z([\{p_{\infty}\}, B_{t{\restriction}i}])\subseteq R$.

\begin{claim}\label{z}
$z_{t{\restriction}i}\in\cl_Z([\{p_{\infty}\}, B_{t{\restriction}i}])$.    
\end{claim}

\begin{proof}
Pick any integer $l>i$. Using condition (5.1) for $s=t{\restriction}l$ and $n=t(l)$ we can find an integer $k(t{\restriction}l,t(l))\geq l+1$ such that $B_{t{\restriction}(l+1)}\subseteq W_{k(t{\restriction}l,t(l))}^{t{\restriction}i}$.  Then $p_{\infty}\in  W_k^{t{\restriction}i}$ for infinitely many $k\in\w$. It follows that $[\{p_{\infty}\}, B_{t{\restriction}i}]\cap [G_k^{t{\restriction}i}, W_k^{t{\restriction}i}]\neq \emptyset$ for infinitely many $k$. Since for each $k\in\w$, $[G_k^{t{\restriction}i},W_k^{t{\restriction}i}]\subset O_k^{t{\restriction}i}$ and the family $\{O_k^{t{\restriction}i}: k\in\w\}$ forms an open neighborhood base at $z_{t{\restriction}i}$, we deduce that $z_{t{\restriction}i}\in\cl_Z([\{p_{\infty}\}, B_{t{\restriction}i}])$.   
\end{proof}

Since $\cl_Z([\{p_{\infty}\}, B_{t{\restriction}i}])\subseteq R$, Claim~\ref{z} implies that $z_{t{\restriction}i}\in R$.
Condition (5) implies that for each $s\in \mathbb T$ the sequence $\{G_k^s:k\in\w\}\subseteq \PR(X)$ converges to $z_s$. Since the set $R$ is an open neighborhood of $z_{t{\restriction}i}$, there exists $k\in\w$ such that $G_k^{t{\restriction}i}\in R\cap \PR(X)=[\{p_{\infty}\}, X]$, which is impossible, as $p_\infty\notin  \bigcup_{s\in \mathbb T}\bigcup_{k\in\w}G_k^s$ (see Equation~\ref{eqqq}). 
\end{proof}

By \cite[Corollary 8.51]{Buk} there exists a subspace $A$ of the Cantor space which is not a $\lambda$-set and has cardinality $\mathfrak b$. Note that $|\PR(A)|=|A|=\mathfrak b$. Hence Proposition~\ref{bpseudo} implies the following: 

\begin{corollary}\label{pseudob}
There exists a Tychonoff zero-dimensional first-countable space $X$ of cardinality $\mathfrak b$ which doesn't embed densely into regular first-countable  feebly compact spaces.   
\end{corollary}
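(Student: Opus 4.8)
The plan is to exhibit the required space concretely as a Pixley--Roy hyperspace, so that all the listed topological properties are delivered by Proposition~\ref{bpseudo} and only the cardinality needs to be arranged. By \cite[Corollary 8.51]{Buk} there is a subspace $A$ of the Cantor space that is not a $\lambda$-set and has $|A|=\mathfrak b$. Since $\mathfrak b\geq\w_1$, the set $A$ is uncountable, hence a genuine (non-trivial) instance of the hypothesis of Proposition~\ref{bpseudo}.

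Now set $X=\PR(A)$. Proposition~\ref{bpseudo} applies verbatim, its sole hypothesis being that $A$ is a non-$\lambda$-subset of the Cantor space, and it yields immediately that $X$ is a zero-dimensional Tychonoff first-countable space which admits no dense embedding into a regular first-countable feebly compact space. It remains only to compute the cardinality. As the points of $\PR(A)$ are precisely the finite subsets of $A$, we have $|X|=|[A]^{<\w}|$, and for an infinite set this equals $|A|=\mathfrak b$. Thus $X$ has all the asserted properties, which proves the corollary.

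The only nonroutine ingredient is the existence of a non-$\lambda$-set of cardinality \emph{exactly} $\mathfrak b$, and this is precisely the external input \cite[Corollary 8.51]{Buk}; everything topological is inherited from Proposition~\ref{bpseudo}. Consequently there is no real obstacle here: the statement is a cardinality-calibrated specialization of the preceding proposition, the whole force of the argument having already been spent on the tree construction inside the proof of Proposition~\ref{bpseudo}. The one harmless point worth double-checking is that the non-embeddability in Proposition~\ref{bpseudo} is asserted for $\PR(A)$ itself rather than for some auxiliary subspace; since it is, no further manipulation is required.
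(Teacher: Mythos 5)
Your proposal is correct and follows exactly the paper's own argument: take a non-$\lambda$-subset $A$ of the Cantor space with $|A|=\mathfrak b$ from \cite[Corollary 8.51]{Buk}, note $|\PR(A)|=|A|=\mathfrak b$, and apply Proposition~\ref{bpseudo}. The cardinality computation and the verbatim application of the proposition are precisely what the paper does, so there is nothing to add.
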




\section{Proofs of the main results}

\subsection*{Proof of Theorem~\ref{comp}}

We need to show that the following assertions are equivalent:
\begin{enumerate}
    \item $\w_1=\mathfrak c$.
    \item Every first-countable Tychonoff space of weight $<\mathfrak c$ embeds into a Hausdorff first-countable compact space.
    \item Each separable first-countable locally compact normal space of cardinality $<\mathfrak c$ embeds into a Hausdorff first-countable compact space.
\end{enumerate}

\begin{proof}
Implication (1) $\Rightarrow$ (2) follows from the fact that Each Tychonoff second-countable space is a subspace of the Tychonoff cube $[0,1]^{\w}$. Implication (2) $\Rightarrow$ (3) follows from the fact that the weight of a first-countable space doesn't exceed its cardinality. Proposition~2.11 from~\cite{BSZ} implies the existence of a separable normal locally compact first-countable space $X$ which contains a homeomorphic copy of the cardinal $\w_1$ endowed with the order topology. Since the only compactification of $\w_1$ is $\w_1{+}1$, which is not first-countable, we deduce that $X$ cannot be embedded into a Hausdorff first-countable compact space.     
\end{proof}

\subsection*{Proof of Theorem~\ref{lc}}
We need to show that the following statements are equivalent.   
\begin{enumerate}	
\item $\mathfrak b = \mathfrak c$.
\item Every Hausdorff, locally compact, first-countable space of weight $< \mathfrak c$ can be (densely) embedded in a Hausdorff first-countable locally compact countably compact space.
 \item Every Hausdorff, locally compact, first-countable space of cardinality $< \mathfrak c$ can be (densely) embedded in a Hausdorff first-countable  countably compact space.
\end{enumerate}
  \begin{proof}
The implication (1) $\Rightarrow$ (2) follows from Proposition~\ref{lce}.
Since the weight of an infinite first-countable space doesn't exceed its cardinality, implication (2) $\Rightarrow$ (3) follows. Taking into account that Mrowka spaces are first-countable and locally compact, implication (3) $\Rightarrow$ (1) follows from Proposition~\ref{b}.  
\end{proof}

\subsection*{Proof of Theorem~\ref{sbc}}
We need to show that the following assertions are equivalent.   
\begin{enumerate}	
\item $\mathfrak b = \mathfrak s= \mathfrak c$.
 \item Every regular first-countable space of weight $< \mathfrak c$ can be densely embedded in a regular first-countable countably compact space.
  \item Every regular first-countable space of cardinality $< \mathfrak c$ can be densely embedded in a regular first-countable countably compact space.
 \end{enumerate}

  \begin{proof}
Implication (1) $\Rightarrow$ (2) follows from Proposition~\ref{mainr}. Since the weight of an infinite first-countable space doesn't exceed its cardinality, implication (2) $\Rightarrow$ (3) holds true. 

(3) $\Rightarrow$ (1). Proposition~\ref{b} implies that $\mathfrak b=\mathfrak c$. Proposition~\ref{De} implies that $\mathfrak s=\mathfrak c$. 
\end{proof}

\subsection*{Proof of Theorem~\ref{sbc1}}
We need to show that the following assertions are equivalent.   
\begin{enumerate}	
\item $\mathfrak b = \mathfrak s= \mathfrak c$.
\item Every Hausdorff zero-dimensional first-countable space of weight $< \mathfrak c$ can be densely embedded into a Hausdorff zero-dimensional first-countable countably compact space.
\item Every Hausdorff zero-dimensional first-countable space of cardinality $< \mathfrak c$ can be densely embedded into a Hausdorff zero-dimensional first-countable countably compact space.
 \end{enumerate}

  \begin{proof}
Implication (1) $\Rightarrow$ (2) follows from Proposition~\ref{mainz}. Since the weight of an infinite first-countable space doesn't exceed its cardinality, implication (2) $\Rightarrow$ (3) holds true. 

(3) $\Rightarrow$ (1) Proposition~\ref{b} implies that $\mathfrak b=\mathfrak c$.
Recall that a Tychonoff first-countable space $Y$ constructed in the proof of Proposition~\ref{De} has cardinality $\mathfrak s$. If $\mathfrak s<\mathfrak c$, then the space $Y$ is zero-dimensional (as every Tychonoff space of cardinality $<\mathfrak c$ is zero-dimensional), and $Y$ doesn't embed  into regular first-countable countably compact spaces, which contradicts assertion (3). Hence $\mathfrak s=\mathfrak c$, i.e. assertion (1) holds.   \end{proof}  

In order to prove Theorem~\ref{rigid} we need the following auxiliary lemma.

\begin{lemma}\label{nonnormal}
 There exists a regular zero-dimensional separable first-countable non-normal space of cardinality $\w_1$.   
\end{lemma}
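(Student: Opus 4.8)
The plan is to produce a concrete space, and the most natural candidate is a suitable modification of the classical \emph{irrational slope topology} or, equivalently, a Mr\'owka-type / van Douwen--Przymusi\'nski construction based on the rationals. First I would take as the underlying set a countable set together with an almost disjoint family of size $\w_1$, so that the total cardinality is $\w_1$. Concretely, fix an almost disjoint family $\mathcal A=\{A_\alpha:\alpha\in\w_1\}\subseteq[\IQ]^{\w}$ indexed so that each $A_\alpha$ converges (in the usual topology of $\R$) to a distinct real point $x_\alpha$, and let the space be $\IQ\cup\{x_\alpha:\alpha\in\w_1\}$, where points of $\IQ$ are isolated and a basic neighborhood of $x_\alpha$ is $\{x_\alpha\}\cup(A_\alpha\setminus F)$ for finite $F$. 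This makes the space zero-dimensional (each basic neighborhood is clopen since the $A_\alpha$ are almost disjoint and the complement of such a neighborhood is a union of clopen pieces), first-countable (the neighborhood bases are countable), separable (the isolated rationals are dense), and regular; the cardinality is $\w_1$ by construction.

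The key step is the \emph{non-normality}, and this is where I would spend the effort. The standard strategy is to separate the uncountable ``top'' $\{x_\alpha:\alpha\in\w_1\}$ into two disjoint closed sets $F_0,F_1$ (both closed since each $x_\alpha$ is isolated within the top layer, being only a limit of rationals) and argue that they cannot be separated by disjoint open sets. The engine here is a counting / Baire-category argument on the countable dense set of isolated points: any open set $U\supseteq F_0$ must contain, for each $x_\alpha\in F_0$, a cofinite subset of $A_\alpha$; assigning to each such $\alpha$ the finite exceptional set gives an uncountable-to-one map into $[\IQ]^{<\w}$, forcing an uncountable subfamily to share the same trace behaviour on $\IQ$, whence the closures of $U$ and $V$ (a putative open set around $F_1$) are forced to meet. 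The cleanest way to guarantee this rigidly is to choose the almost disjoint family so that the traces on $\IQ$ are \emph{entangled} in the sense of van Douwen--Przymusi\'nski, or simply to invoke the well-known fact that the $\Psi$-space over a maximal (or merely large enough) almost disjoint family of size $\ge\w_1$ is not normal; indeed any Mr\'owka space $\psi(\mathcal A)$ with $|\mathcal A|\ge\w_1$ fails normality by the classical pigeonhole argument on partitions of $\mathcal A$.

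The main obstacle will be reconciling \emph{separability} with \emph{non-normality} at cardinality exactly $\w_1$: a bare Mr\'owka space over an uncountable almost disjoint subfamily of $[\w]^\w$ is zero-dimensional, first-countable, and non-normal, but it need not be separable in the form stated, so I would instead base the almost disjoint family on $\IQ$ (or on a countable dense subset of $2^\w$) rather than on $\w$, which supplies a canonical countable dense set of isolated points and hence separability for free. Thus the concrete realization I favor is: let $D$ be a countable dense subset of the Cantor space, choose an almost disjoint family $\mathcal A$ of $\w_1$ many sequences in $D$ converging to $\w_1$ distinct limit points, and topologize $D\cup\mathcal A$ as a Mr\'owka space. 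Regularity and zero-dimensionality follow from almost disjointness, separability and first-countability are immediate, the cardinality is $\w_1$, and non-normality follows from the standard argument that an uncountable almost disjoint family cannot have its induced closed ``ends'' separated. I would close by remarking that this is precisely the kind of space needed as input to Theorem~\ref{rigid}, since it is regular, separable, first-countable, and non-normal of cardinality $\w_1\le\mathfrak c$.
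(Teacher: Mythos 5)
Your construction of the space itself is fine: a Mr\'owka-type space over an almost disjoint family of size $\w_1$ on a countable set of isolated points is indeed regular, zero-dimensional, separable, first-countable, and of cardinality $\w_1$. The genuine gap is in the non-normality step, and it is not a fixable slip of wording: the ``well-known fact'' you invoke --- that \emph{any} Mr\'owka space $\psi(\mathcal A)$ with $|\mathcal A|\geq\w_1$ fails normality by a pigeonhole argument --- is false in ZFC. The pigeonhole argument you have in mind is Jones' Lemma, which for a separable space with a closed discrete set of size $\w_1$ only yields non-normality when $2^{\w_1}>2^{\w}$. This hypothesis fails exactly in the models where the paper needs this lemma: Theorem~\ref{BZZnew} and Corollary~\ref{cormain} apply it under $\w_1<\mathfrak b=\mathfrak s=\mathfrak c$ and under PFA, respectively, and PFA (indeed already MA$_{\w_1}$) implies $2^{\w_1}=2^{\w}$. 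Worse, your specific choice of family --- sequences of rationals converging to $\w_1$ many distinct reals --- is precisely the kind that can consistently give a \emph{normal} $\psi$-space: under MA$_{\w_1}$ there is a $\mathbb Q$-set $\{x_\alpha:\alpha\in\w_1\}$, and for such a choice of limit points every subfamily of $\{A_\alpha:\alpha\in\w_1\}$ can be separated, so $\psi(\mathcal A)$ is normal. Your two fallbacks also conflict with the cardinality requirement: maximality forces $|\mathcal A|\geq\mathfrak a$, and $\mathfrak a>\w_1$ under PFA, while ``large enough'' in the sense of $|\mathcal A|=\mathfrak c$ (where Jones' Lemma does work in ZFC, since $2^{\mathfrak c}>2^\w$) again exceeds $\w_1$ in the relevant models.

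To repair your approach in ZFC you would have to choose the almost disjoint family with a combinatorial obstruction to separation built in --- for instance a Luzin family (Luzin's ZFC construction of an almost disjoint $\{A_\alpha:\alpha\in\w_1\}$ in which no two uncountable disjoint subfamilies can be separated); that would be a legitimate alternative route. The paper instead avoids almost disjoint families altogether and follows Nyikos--Vaughan: it takes a Hausdorff $(\w_1,\w_1)$-gap $\{A_\alpha\}\cup\{B_\alpha\}$, which exists in ZFC, and builds a space on $\{A_\alpha:\alpha\in\w_1\}\cup\{B_\alpha:\alpha\in\w_1\}\cup\w$ in which the two sides of the gap become closed sets; the defining property of a gap (no $C$ with $A_\alpha\subset^* C\subset^* B_\alpha$ for all $\alpha$) is exactly what makes these two closed sets inseparable by open sets, with no cardinal-arithmetic or maximality assumptions. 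Either the Luzin-family route or the gap route works; the argument you actually gave does not.
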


\begin{proof}
We shall use ideas of Nyikos and Vaughan~\cite{NV}. Consider a Hausdorff $(\omega_1,\omega_1)$ gap, i.e. a family $\{A_{\alpha}:\alpha\in\w_1\}\cup \{B_{\alpha}:\alpha\in\w_1\}\subset [\w]^{\w}$ which satisfies the following conditions:
\begin{itemize}
    \item $A_{\alpha}\subset^* B_{\beta}$ for any $\alpha,\beta\in\w_1$;
    \item $A_{\alpha}\subset^* A_{\beta}$ for any $\alpha<\beta<\w_1$;
    \item $B_{\alpha}\subset^* B_{\beta}$ for any $\beta<\alpha<\w_1$;
    \item there exists no $C\in[\w]^\w$ such that $A_{\alpha}\subset^* C\subset^* B_{\alpha}$ for all $\alpha\in\w_1$.
\end{itemize}
For a finite set $F$ and ordinals $\alpha<\beta<\w_1$ let us introduce two sorts of sets:
$$A(\alpha,\beta,F)=\{A_{\xi}\mid \alpha<\xi\leq \beta\}\cup ((A_{\beta}\setminus A_{\alpha})\setminus F)\quad\hbox{and}$$
$$B(\alpha,\beta,F)=\{B_{\xi}\mid \alpha<\xi\leq \beta\}\cup ((B_{\alpha}\setminus B_{\beta})\setminus F).$$
Let $N$ be the set $\{A_{\alpha}:\alpha\in\w_1\}\cup \{B_{\alpha}:\alpha\in\w_1\}\cup\w$ endowed with the topology $\tau$ defined by the subbase: 
\begin{align*}
\mathcal B=\{\{n\}:n\in\w\}\cup\{A_0\}\cup\{B_0\}\cup\{A(\alpha,\beta,F):\alpha<\beta<\w_1, F\in[\w]^{<\w}\}\cup\\
\cup\{B(\alpha,\beta,F):\alpha<\beta<\w_1, F\in[\w]^{<\w}\}.   
\end{align*}

It is straightforward to check that the space $N$ is regular, separable, first-countable and has cardinality $\w_1$. Also, the closed sets $\{A_{\alpha}:\alpha\in\w_1\}$ and $\{B_{\alpha}:\alpha\in\w_1\}$ cannot be separated by disjoint open sets. Hence the space $N$ is not normal.   
\end{proof}

\subsection*{Proof of Theorem~\ref{rigid}}
We need to show the equivalence of the following assertions:
\begin{enumerate}
    \item $\mathfrak b = \mathfrak s= \mathfrak c$.
    \item Every regular separable first-countable non-normal space of weight $< \mathfrak c$ embeds into a Nyikos $\mathbb R$-rigid space.
     \item Every regular separable first-countable non-normal space of cardinality $< \mathfrak c$ embeds into a Nyikos $\mathbb R$-rigid space.
\end{enumerate}    

\begin{proof}
(1) $\Rightarrow$ (2). Let $X$ be a regular separable first countable non-normal space of weight $<\mathfrak c$.
Applying Jones machine (see~\cite{BZ} or~\cite{J} for details) we embed $X$ into a first-countable regular separable space $J(X)$ of weight $<\mathfrak c$ containing two points $a,b$  which cannot be separated by any real-valued continuous function. Applying the modified van Douwen's extension $E_{a,b}(J(X))$ of $J(X)$ (see Section 5 in~\cite{BZ}) we embed $J(X)$ into a regular separable first-countable $\R$-rigid space of weight $<\mathfrak c$. By Theorem~\ref{mainr} we get a regular first-countable countably compact space $Z$ which contains the separable $\mathbb R$-rigid space $E_{a,b}(J(X))$ as a dense subspace. Hence $Z$ is an $\R$-rigid Nyikos space which contains a homeomorphic copy of $X$.

The implication (2) $\Rightarrow$ (3) holds true, because  the weight of an infinite first-countable space doesn't exceed its cardinality.

(3) $\Rightarrow$ (1). Let $N_1$ be the topological sum of the space $N$ constructed in Lemma~\ref{nonnormal} and the Mrowka space $\psi(\mathcal A)$ constructed in Proposition~\ref{b}. It is clear that $N_1$ is a regular separable first-countable non-normal space of cardinality $\mathfrak b$ which doesn't embed into Nyikos spaces. This example implies that $\mathfrak b=\mathfrak c$. 

Let $N_2$ be the topological sum of the space $N$ constructed in Lemma~\ref{nonnormal} and the space $Y$ constructed in Proposition~\ref{De}. It can be checked that $N_2$ is a separable regular first-countable non-normal space of cardinality $\mathfrak s$ which doesn't embed into Nyikos spaces. Hence $\mathfrak s=\mathfrak c$.  
\end{proof}


\subsection*{Proof of Theorem~\ref{BZZnew}}
Assuming that $\w_1<\mathfrak b=\mathfrak s=\mathfrak c$ we need to construct an $\mathbb R$-rigid Nyikos space. 

\begin{proof}
Consider the space $N$ constructed in Lemma~\ref{nonnormal} and apply item (3) of Theorem~\ref{rigid} to it.
\end{proof}

\subsection*{Proof of Corollary~\ref{cormain}}
We need to show that under PFA the following assertions hold:
\begin{enumerate}
 \item Every normal Nyikos space is compact. 
 \item Every regular separable first-countable space of weight $< \mathfrak c$ embeds into an $\mathbb R$-rigid Nyikos space.
\end{enumerate}
\begin{proof}
Assertion (1) follows from Theorem~\ref{NZ}. In order to prove assertion (2) fix any regular separable first-countable space $X$ of weight $< \mathfrak c$. Let $N$ be the space constructed in Lemma~\ref{nonnormal}. Recall that PFA implies $\w_1<\mathfrak b=\mathfrak s=\mathfrak c$. Hence the topological sum $X\cup N$ is a regular separable first-countable non-normal space of weight $<\mathfrak c$. By Theorem~\ref{rigid}(2), $X\cup N$ embeds into an $\mathbb R$-rigid Nyikos space.
\end{proof}

In order to prove Theorem~\ref{Tychonoff} we shall need the following auxiliary lemma.

\begin{lemma}\label{tech}
Let $X$ be a first-countable Tychonoff space of weight $\kappa$. Then for each $x\in X$ there exists an open neighborhood base $\mathcal B_x=\{U_n^x:n\in\w\}$ at $x$ such that  $U^x_{n+1}\subseteq U^x_n$ for any $n\in\w$, and $\mathcal B=\bigcup_{x\in X}\mathcal B_x$ is a base of $X$ of size $\kappa$ satisfying the following condition:
\begin{itemize}
    \item[(i)] for each $x\in X$ and $U_n^x\in\mathcal B_x$ there exists $f_{U_n^x}\in C(X,[0,1])$ such that $f_{U_n^x}{\restriction}_{X\setminus U_n^x}\equiv 0$ and  $f_{U_n^x}{\restriction}_{U_{n+1}^x}\equiv 1$.
\end{itemize}
\end{lemma}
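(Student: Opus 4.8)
The plan is to derive the statement from the existence of a \emph{cozero base of size $\kappa$}, and then to read off both the nested bases $\mathcal B_x$ and their separating functions from super-level sets of finitely many cozero functions. Throughout, for rationals $0\le a<b\le 1$ let $\phi_{a,b}\colon[0,1]\to[0,1]$ denote the ramp that is $0$ on $[0,a]$ and $1$ on $[b,1]$.

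\emph{Step 1 (cardinality backbone).} First I would show that $X$ has a family $\mathcal H\subseteq C(X,[0,1])$ with $|\mathcal H|\le\kappa$ whose cozero sets $\{h^{-1}((0,1]):h\in\mathcal H\}$ form a base. Fix a base $\mathcal V$ with $|\mathcal V|=\kappa$ and let $\mathcal P$ be the set of pairs $(U,V)\in\mathcal V\times\mathcal V$ admitting some $g\in C(X,[0,1])$ with $g\uhr_U\equiv 1$ and $g\uhr_{X\setminus V}\equiv 0$; choosing one witness $g_{U,V}$ per pair yields $\mathcal H=\{g_{U,V}:(U,V)\in\mathcal P\}$ with $|\mathcal H|\le\kappa$. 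To see that the cozero sets of $\mathcal H$ form a base, take $x$ in an open set $W$, choose $V\in\mathcal V$ with $x\in V\subseteq W$ and, by complete regularity, $g\in C(X,[0,1])$ with $g(x)=1$ and $g\uhr_{X\setminus V}\equiv 0$. Then $\phi_{1/3,2/3}\circ g$ is identically $1$ on the open set $\{g>2/3\}\ni x$ and $0$ off $V$, so any $U\in\mathcal V$ with $x\in U\subseteq\{g>2/3\}$ gives $(U,V)\in\mathcal P$, whence $x\in U\subseteq\{g_{U,V}>0\}\subseteq V\subseteq W$.

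\emph{Step 2 (the working base).} Let $\mathcal B_0$ consist of all finite intersections $\bigcap_{h\in G}h^{-1}((r_h,1])$ with $G\in[\mathcal H]^{<\w}$ and $r_h\in\IQ\cap[0,1)$. Since $h^{-1}((0,1])=\bigcup_{r\in\IQ\cap(0,1)}h^{-1}((r,1])$, the family $\mathcal B_0$ is a base, and $|\mathcal B_0|\le\kappa$. Next, for each $x$ I would recursively pick $U_n^x=\bigcap_{h\in G_n}h^{-1}((r_h^n,1])\in\mathcal B_0$ with $x\in U_n^x$, with $G_n\subseteq G_{n+1}$ finite, and with $r_h^n<r_h^{n+1}<h(x)$ for $h\in G_n$, arranging in addition that $U_{n+1}^x$ lies inside the $n$-th member of a fixed countable neighborhood base at $x$. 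This is possible because $\mathcal B_0$ is a base: one intersects the current $U_n^x$ with a $\mathcal B_0$-neighborhood of $x$ contained in that member, strictly raising the old thresholds (below the values $h(x)$) and adjoining the finitely many new coordinate functions. Then $\mathcal B_x=\{U_n^x:n\in\w\}$ is a nested neighborhood base at $x$, the separating function for the pair $(U_n^x,U_{n+1}^x)$ being $f_{U_n^x}=\min_{h\in G_n}\phi_{r_h^n,r_h^{n+1}}\circ h$: it equals $1$ on $U_{n+1}^x$ (each factor is $1$ where $h>r_h^{n+1}$) and $0$ off $U_n^x$ (some factor vanishes where $h\le r_h^n$), which is exactly condition (i). Finally $\mathcal B=\bigcup_{x\in X}\mathcal B_x\subseteq\mathcal B_0$ is a base of size at most $\kappa$, hence exactly $\kappa$; if one wishes $U_0^x=X$ as in the later constructions, one sets $G_0=\emptyset$ and uses $f_{U_0^x}=\min_{h\in G_1}\phi_{0,r_h^1}\circ h$ at the bottom.

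The main obstacle is reconciling the size requirement with condition (i). Producing a functionally separated nested base \emph{pointwise} is routine, but independent pointwise choices could produce up to $|X|$ many basic sets, and $|X|$ may exceed $\kappa=w(X)$ (e.g.\ on $\R$). The crux, therefore, is that all local bases must be extracted from a single $\kappa$-sized family -- the finite intersections of rational super-level sets of a cozero base of size $\kappa$ -- and that consecutive members must \emph{share their coordinate functions} while only raising thresholds, so that the separating function can be exhibited explicitly as a minimum of ramps. Thus the genuine work sits in Step~1 (the cozero base of size $\kappa$, obtained via the complete-regularity pair trick) together with the bookkeeping of Step~2; the remaining verifications -- that $\mathcal B_0$ is a base, that the $U_n^x$ are nested and shrink to $x$, and that each $f_{U_n^x}$ behaves as required -- are straightforward.
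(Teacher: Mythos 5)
Your proposal is correct, and it reaches the lemma by a route that shares the paper's one crucial idea but packages the construction differently. The shared crux is the ``pair trick'': from a fixed base of size $\kappa$, consider pairs of base elements admitting a continuous $[0,1]$-valued witness ($\equiv 1$ on the smaller set, $\equiv 0$ off the larger), fix one witness per pair, and observe via complete regularity that such pairs are cofinal in the neighborhoods of every point --- this is exactly the paper's family $\mathcal Y$ and its Claim, and your $\mathcal P$ in Step~1 is the same device. Where you diverge is in what you do with it: the paper stays inside the original base, recursively interleaving pairs $x\in D_1^{n+1}\subset D_0^{n+1}\subseteq V_{n+1}\cap D_1^{n}$ so that $U_n^x=D_0^n$ and the separating function is literally the pair's witness, with no further computation; you instead first extract a cozero base $\mathcal H$ of size $\kappa$, pass to the derived base $\mathcal B_0$ of finite intersections of rational super-level sets, obtain nesting by raising thresholds, and synthesize the separating functions as minima of ramps $\phi_{r_h^n,r_h^{n+1}}\circ h$. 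The paper's version is leaner (the functions come for free, and $\mathcal B\subseteq\mathcal D$), while yours buys extra structure that the lemma does not ask for but which could be reusable: your base consists of cozero sets, is drawn from a family closed under finite intersections, and the witnesses are given by an explicit formula rather than by an appeal to the axiom of choice over pairs. One small asymmetry worth noting: in the paper $f_{U_n^x}$ is determined at stage $n$ (it is $1$ on $D_1^n$, which contains all later sets), whereas your $f_{U_n^x}$ needs the stage-$(n+1)$ thresholds; this is harmless, since condition (i) couples $U_n^x$ with $U_{n+1}^x$ anyway, but it makes your bookkeeping slightly heavier. Your cardinality accounting ($|\mathcal B_0|\le\kappa\cdot\omega=\kappa$ for infinite $\kappa$, and $|\mathcal B|\ge w(X)$ forcing equality) is fine.
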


\begin{proof}
Let $\mathcal D$ be a base of $X$ of size $\kappa$. 
Put
$$\mathcal Y=\{\langle D_0,D_1\rangle\in \mathcal D^2: \exists f_{\langle D_0,D_1\rangle}\in C(X,[0,1])\text{ such that }f_{\langle D_0,D_1\rangle}{\restriction}_{D_1}\equiv 1 \text{ and } f_{\langle D_0,D_1\rangle}{\restriction}_{X\setminus D_0}\equiv 0\}.$$

\begin{claim}\label{help}
For every $x\in X$ and open neighborhood $U$ of $x$ there exists $\langle D_0,D_1\rangle\in\mathcal Y$ such that $x\in D_1\subset D_0\subset U$.    
\end{claim}

\begin{proof}
Fix any $x\in X$ and an open neighborhood $U$ of $x$. Since $\mathcal D$ is a base, there exists an open set $D_0\in\mathcal D$ such that $x\in \mathcal D_0\subseteq U$. Then there exists a continuous function $h\in C(X,[0,1])$ such that $h{\restriction}_{X\setminus D_0}\equiv 0$ and $h(x)=1$. Since the set $h^{-1}([1/2,1])$ contains $x$ in its interior, there exists $D_1\in\mathcal D$ such that $D_1\subset h^{-1}([1/2,1])$. Clearly, $f(z)\defeq 2\min\{h(z),1/2\}\in  C(X,[0,1])$. It is easy to see that $f{\restriction}_{D_1}\equiv 1$ and  $f{\restriction}_{X\setminus D_0}\equiv 0$, i.e. $\langle D_0,D_1\rangle\in \mathcal Y$.  
\end{proof}

Fix any $x\in X$ and an open neighborhood base $\mathcal V=\{V_n:n\in\w\}\subset \mathcal D$ of $x$ such that $V_{n+1}\subset V_n$ for all $n\in\w$. Wlog we can assume that $V_0=X$. Let $U_0^x=X$ and $f_{U_0^x}$ be the constant map from $X$ to $[0,1]$ with value $1$. Claim~\ref{help} implies that there exist a pair $\langle D_0^1,D_1^1\rangle\in\mathcal Y$ such that $x\in D_0^1\subset D_1^1\subseteq V_1$ and a function $f_{\langle D_0^1,D_1^1\rangle}\in C(X,[0,1])$ such that $f_{\langle D_0^1,D_1^1\rangle}{\restriction}_{D_1^1}\equiv 1$ and  $f_{\langle D_0^1,D_1^1\rangle}{\restriction}_{X\setminus D_0^1}\equiv 0$. Put $U_1^x=D_0^1$ and $f_{U_1^x}=f_{\langle D_0^1,D_1^1\rangle}$.
Using Claim~\ref{help} again we can find a pair $\langle D_0^2,D_1^2\rangle\in\mathcal Y$ such that $x\in D_0^2\subset D_1^2\subseteq V_2\cap D_1^1$ and a function $f_{\langle D_0^2,D_1^2\rangle}\in C(X,[0,1])$ such that $f_{\langle D_0^2,D_1^2\rangle}{\restriction}_{D_1^2}\equiv 1$ and  $f_{\langle D_0^2,D_1^2\rangle}{\restriction}_{X\setminus D_0^2}\equiv 0$. Put $U_2^x=D_0^2$ and $f_{U_2^x}=f_{\langle D_0^2,D_1^2\rangle}$. Proceeding this way, for each $x\in X$ we obtain an open neighborhood base $\{U_n^x:n\in\w\}$ at $x$ and a family $\{f_{U_n^x}:U_n^x\in\mathcal B_x\}\subset C(X,[0,1])$ such that $\mathcal B=\bigcup_{x\in X}\mathcal B_x$ satisfies condition (i). Since $\mathcal B\subseteq \mathcal D$ we get that $|\mathcal B|\leq \kappa$. 
\end{proof}

Recall that by ($\heartsuit$) we denote the following consistent assumption: ``$\mathfrak b=\mathfrak c$ and there exists a $P_{\mathfrak c}$-point in $\beta(\w)$''. 

\subsection*{Proof of Theorem~\ref{Tychonoff}}

We need to show that assuming ($\heartsuit$) every Tychonoff first-countable space $X$ of weight $\kappa<\mathfrak{c}$ embeds densely  into a Tychonoff first-countable countably compact space.

\begin{proof}
Let $X$ be a first-countable Tychonoff space such that $w(X)<\mathfrak{c}$. Without loss of generality we can assume that the underlying set of $X$ is disjoint with $\mathfrak c$. The first countability of $X$ implies that $|X|\leq \mathfrak c^{\w}=\mathfrak c$. If $X$ is countably compact, then there is nothing to prove. Otherwise, let 
$$\mathcal{D}=\{A\in [X]^{\omega}: \hbox{ } A \hbox{ is closed and discrete in }X \}.$$ Fix any bijection $h: \mathcal{D}\cup [\mathfrak{c}]^{\omega}\rightarrow \mathfrak{c}$ such that $h(a)\geq \sup(a)$ for any $a\in [\mathfrak{c}]^{\omega}$. 
Next, for every $\alpha\leq \mathfrak{c}$ we shall recursively construct a topology $\tau_{\alpha}$ on $X_{\alpha}\subseteq X\cup\alpha$. For the sake of brevity we denote the space $(X_{\alpha},\tau_{\alpha})$ by $Y_{\alpha}$. At the end, we will show that the space $Y_{\mathfrak{c}}$ has the desired properties.

Let $X_0=X$. By Lemma~\ref{tech}, there exists a base $\mathcal B_0=\bigcup_{x\in X}\mathcal B_{0}^x$ of size $<\mathfrak c$ of the topology on $X$, where for each $x\in X$, the collection $\mathcal B_{0}^x=\{U_{n,0}^x:n\in \w\}$ is a nested open neighborhood base at $x$ and satisfies the following:  $U_{0,0}^x=X$; for every $n>0$ and $x\in X_0$
 there exists a continuous function $f_{(U_{n,0}^x, U_{n+1,0}^x)}:X\rightarrow [0,1]$ such that $f_{(U_{n,0}^x, U_{n+1,0}^x)}{\restriction}_{X\setminus U_{n,0}^x}\equiv 0$ and $f_{(U_{n,0}^x, U_{n+1,0}^x)}{\restriction}_{U^x_{n+1,0}}\equiv 1$. We also set $f_{(U_{0,0}^x, U_{1,0}^x)}\equiv 1$.


Assume that for each $\alpha<\xi$ the Tychonoff first-countable spaces $Y_{\alpha}$ are already constructed by defining a base $\mathcal B_{\alpha}=\bigcup_{x\in X_{\alpha}}\mathcal B_{\alpha}^x$ of the the topology $\tau_{\alpha}$ on a set $X_{\alpha}\subseteq X\cup\alpha$, where for each $x\in X_{\alpha}$, the collection $\mathcal B_{\alpha}^x=\{U_{n,\alpha}^x:n\in \w\}$ is a nested open neighborhood base at $x$. Additionally assume that $X_{\alpha}\subset X_{\beta}$ for any $\alpha\in\beta$, and we have in parallel  constructed a family $$\Phi_\alpha=\{f_{(U_{n,\alpha}^x, U_{n+1,\alpha}^x)}:n\in\w, x\in X_\alpha\},$$ where for each $(U_{n,\alpha}^x, U_{n+1,\alpha}^x)$ the function $f_{(U_{n,\alpha}^x, U_{n+1,\alpha}^x)}: Y_{\alpha}\rightarrow [0,1]$ is continuous and $$f_{(U_{n,\alpha}^x, U_{n+1,\alpha}^x)}{\restriction}_{X_\alpha\setminus U_{n,\alpha}^x} \equiv 0.$$
 In what follows, for brevity, we put $E(x,n,\alpha)=(U_{n,\alpha}^x, U_{n+1,\alpha}^x)$ for all $n\in\w$, $\alpha<\xi$ and $x\in X_0$.

The families $\mathcal B_{\alpha}$ and $\Phi_{\alpha}$ are presumed to satisfy the following conditions  for all $\alpha<\xi$:
\begin{enumerate}
    \item $|\mathcal B_{\alpha}|<\mathfrak c$;
    \item $U_{0,\alpha}^x=X_{\alpha}$ for each $x\in X_{\alpha}$;
    \item $f_{E(x,0,\alpha)}\equiv 1$ for any $x\in X_{\alpha}$;
    \item for every $x\in X_{\alpha}$ and $n\in\w$ we have $f_{E(x,n,\alpha)}{\restriction}_{U_{n+1,\alpha}^x}\equiv 1$ and $f_{E(x,n,\alpha)}{\restriction}_{X_\alpha\setminus U_{n,\alpha}^x} \equiv 0$;
    \item for every $\beta<\alpha$, $n\in\w$ and $x\in X_{\beta}$ we have $U_{n,\beta}^x=U_{n,\alpha}^x\cap X_\beta$ and $\cl_{Y_\alpha}(U_{n,\beta}^x)=\cl_{Y_\alpha}(U_{n,\alpha}^x)$;
    \item for every $\beta<\alpha$, $n\in\w$ and $x\in X_{\beta}$ we have $f_{E(x,n,\alpha)}{\restriction}_{X_{\beta}}=f_{E(x,n,\beta)}$.
\end{enumerate}

There are three cases to consider:
\begin{itemize}
\item[1)] $\xi=\gamma+1$ for some $\gamma\in\mathfrak{c}$ and $h^{-1}(\gamma)\cap X_{\gamma}$ is not an infinite closed discrete subset of $Y_{\gamma}$;
\item[2)] $\xi=\gamma+1$ for some $\gamma\in\mathfrak{c}$ and $h^{-1}(\gamma)\cap X_{\gamma}$ is an infinite closed discrete subset of $Y_{\gamma}$;
\item[3)] $\xi$ is a limit ordinal.
\end{itemize}

\medskip

1) Let $X_{\xi}=X_{\gamma}$, $\mathcal B_{\xi}=\mathcal B_{\gamma}$ and $\Phi_\xi=\Phi_{\gamma}$.
\medskip 

2) Put $X_{\xi}=X_{\gamma}\cup\{\gamma\}$. Let $h^{-1}(\gamma)=\{z_i\}_{i\in\omega}$. Fix any $P_{\mathfrak{c}}$-point $p$ which exists by the assumption. 
Since the set $h^{-1}(\gamma)$ is closed and discrete, for any $x\in X_{\gamma}$ there exists a positive integer $k(x)$ such that  $|U_{k(x),\gamma}^{x}\cap h^{-1}(\gamma)|\leq 1$. 
Since $U_{0,\gamma}^x=X_{\gamma}$ and the ultrafilter $p$ is free, for any $x\in X_{\gamma}$ there exists $m(x)<k(x)$ such that 
$$F_{x}=\{i\in\omega: z_i\in U_{m(x),\gamma}^x\setminus U_{m(x)+1,\gamma}^x\}\in p.$$
Note that the set $F_x$ actually depends on the pair $\big( U_{m(x),\gamma}^x, U_{m(x)+1,\gamma}^x\big)=E(x,m(x),\gamma)$.

Fix an arbitrary $f\in\Phi_{\gamma}$. By the compactness of $[0,1]$, the ultrafilter $f(p)$
on $[0,1]$ generated by the family $\{f(P): P\in p\}$ converges to a point
$b\in [0,1]$. That is for each $n\in\mathbb N$
$$P_{f,n}=\{i\in\w:f(z_i)\in(b-1/n,b+1/n)\}\in p.$$
Since $p$ is a $P$-point, there exists $G_f\in p$ such that $G_f\subset^*P_{f,1/n} $ for every $n\in\N$. It is easy to see that the sequence $\{f(z_i):i\in G_f\}$  converges to $b$. 

Since $|\mathcal B_\gamma\cup \Phi_{\gamma}|<\mathfrak{c}$ we obtain that 
$|\{F_x:x\in X_\gamma\}\cup\{G_f: f\in\Phi_{\gamma}\}|<\mathfrak c,$ and hence there exists $F\in p$ such that $F\subset^{*}F_x\cap G_f$ for any $x\in X_{\gamma}$ and  $f\in\Phi_\gamma$. 

For any $n\in\w$ and $x\in X_{\gamma}$ set
$$b_n^x=\lim_{i\in F}f_{E(x,n,\gamma)}(z_i)$$
and note that $b^x_n=0$ for $n>m(x)$.

Let 
$d_{\gamma}=\{z_i: i\in F\}.$ 
Condition (4) implies that $\cl_{Y_{\gamma}}(U^x_{n+1,\gamma})\subseteq U^x_{n,\gamma}$ for every $x\in X_\gamma$ and $n\in\w$.  
Then for any $x\in X_{\gamma}$ we have $$d_{\gamma}\subset^{*}U_{m(x),\gamma}^x\setminus U_{m(x)+1,\gamma}^x\subset U_{m(x),\gamma}^x\setminus \cl_{Y_{\gamma}}\big(U_{m(x)+2,\gamma}^x\big).$$ 
It follows that for every $x\in X_{\gamma}$ there exists a function $g_x\in \w^F$ such that for all but finitely many $i\in F$ the following holds:
\begin{equation*}
\cl_{Y_{\gamma}}(U^{z_i}_{g_x(i),\gamma})\subset U_{m(x),\gamma}^x\setminus  \cl_{Y_{\gamma}}\big(U_{m(x)+2,\gamma}^x\big).
\end{equation*}

By the continuity, for any $f\in\Phi_{\gamma}$ 
 there is a  function $\pi_f\in\w^F$ such that 
\begin{equation}
\tag{$\circ_0$} f(U_{\pi_f(i),\gamma}^{z_i})\subset \big(f(z_i)-\frac{1}{2^{i+1}},f(z_i)+\frac{1}{2^{i+1}}\big)\cap [0,1].
\end{equation}

Let $h_{\gamma}\in\omega^{F}$ be a function such that $h_{\gamma}\geq^* g_{x}$ and $h_{\gamma}\geq^*\pi_f$ for each $x\in X_{\gamma}$ and $f\in\Phi_\gamma$. Such a function $h_{\gamma}$ exists because $|\mathcal B_{\gamma}\cup\Phi_{\gamma}|<\mathfrak{b}=\mathfrak{c}$. Without loss of generality we can additionally assume that $U_{h_{\gamma}(i),\gamma}^{z_i}\cap U_{h_{\gamma}(j),\gamma}^{z_j}=\emptyset$ for each distinct $i,j\in \w$.

We are in a position now to define
the  open neighborhood base $\mathcal B^\gamma_\xi$ at the point $\gamma$ in the space $Y_{\xi}$:
put 
\begin{equation}
\tag{$\circ_1$}
U_{0,\xi}^{\gamma}=X_{\xi}\quad  \hbox{ and }\quad U_{n,\xi}^{\gamma}=\bigcup_{i\in F\setminus n}U^{z_i}_{h_{\gamma}(i)+n,\gamma}\cup\{\gamma\} 
\end{equation}
 for all $n\in \N$.

Next, we define  $\mathcal B_{\xi}^x$ for $x\in X_{\gamma}$. Let $U_{0,\xi}^x=X_{\xi}$ for every $x\in X_{\gamma}$. 
For each  $n\in\N$ let $U_{n,\xi}^x=U_{n, \gamma}^x$ if either $n\geq m(x)+1$ or $n=m(x)$ and $b_n^x=0$, otherwise, put $U_{n,\xi}^x=U_{n,\gamma}^x\cup\{\gamma\}$.
It is easy to check that the family $\mathcal B_{\xi}=\{U_{n,\xi}^x: x\in X_{\xi},n\in\w\}$ forms a base of a topology $\tau_{\xi}$, and for each $x\in X_{\xi}$ the family $\mathcal B_{\xi}^{x}=\{U_{n,\xi}^x:n\in\w\}$ forms an open neighborhood base at $x$ in $Y_{\xi}$. Clearly, the first equality in condition (5) is satisfied for every $\beta<\alpha\leq \xi$ and $x\in X_\beta$. Fix an arbitrary $x\in X_{\gamma}$. The choice of $g_x$ and the inequality $g_x\leq^*h_\gamma$ ensure the existence of large enough $n\in\w$ such that 
$U^x_{m(x)+2,\xi}\cap U_{n,\xi}^\gamma=\emptyset$. 
Thus $Y_\xi$ is Hausdorff. 
The next claim proves the second equality in condition (5) for $\alpha=\gamma$, and therefore also for any $\alpha<\xi$.
\begin{claim} \label{dense_in_new}
In the space $Y_\xi$, $U_{n,\gamma}^x$ is dense in $U_{n,\xi}^x$ for all $n\in\w$ and 
$x\in X_\gamma$. 
\end{claim}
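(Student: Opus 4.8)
The plan is to reduce this density statement to a single‑point verification. Since $U_{n,\gamma}^x\subseteq U_{n,\xi}^x$ and, by the definition of the base $\mathcal B_\xi^x$, the set $U_{n,\xi}^x$ equals either $U_{n,\gamma}^x$ or $U_{n,\gamma}^x\cup\{\gamma\}$, the two sets can differ only by the newly added point $\gamma$. If $U_{n,\xi}^x=U_{n,\gamma}^x$ there is nothing to prove, so it suffices to treat the case $\gamma\in U_{n,\xi}^x$ and to show that $\gamma\in\cl_{Y_\xi}(U_{n,\gamma}^x)$.

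First I would observe that the ``otherwise'' clause that places $\gamma$ into $U_{n,\xi}^x$ happens exactly when $n\le m(x)$ (namely $n<m(x)$, or $n=m(x)$ together with $b_n^x\neq 0$); the case $n=0$ satisfies $n\le m(x)$ trivially. Combining $n\le m(x)$ with the nestedness of $\mathcal B_\gamma^x$ and the key containment $d_{\gamma}\subset^{*}U_{m(x),\gamma}^x\setminus U_{m(x)+1,\gamma}^x$ established earlier, I get $z_i\in U_{m(x),\gamma}^x\subseteq U_{n,\gamma}^x$ for all but finitely many $i\in F$; that is, a tail of the sequence $d_{\gamma}$ already lies inside $U_{n,\gamma}^x$.

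The core step is then to intersect an arbitrary basic neighborhood of $\gamma$ with $U_{n,\gamma}^x$. Given $k\in\N$, the neighborhood $U_{k,\xi}^{\gamma}=\bigcup_{i\in F\setminus k}U^{z_i}_{h_{\gamma}(i)+k,\gamma}\cup\{\gamma\}$ from $(\circ_1)$ contains, for each $i\in F\setminus k$, the point $z_i$ itself, since every $U^{z_i}_{m,\gamma}$ contains $z_i$. Choosing $i\in F\setminus k$ large enough that also $z_i\in U_{n,\gamma}^x$ exhibits a common point $z_i\in U_{k,\xi}^{\gamma}\cap U_{n,\gamma}^x$. As $U_{0,\xi}^{\gamma}=X_\xi$ meets $U_{n,\gamma}^x$ trivially, every basic neighborhood of $\gamma$ meets $U_{n,\gamma}^x$, whence $\gamma\in\cl_{Y_\xi}(U_{n,\gamma}^x)$ and $U_{n,\gamma}^x$ is dense in $U_{n,\xi}^x$.

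I do not expect a genuine obstacle here: the argument is essentially immediate once one records that each $z_i$ belongs to its own base neighborhoods. The only care needed is the bookkeeping with the case distinction defining $U_{n,\xi}^x$, in particular confirming that the case $\gamma\in U_{n,\xi}^x$ forces $n\le m(x)$, so that the tail of $d_{\gamma}$ captured by every neighborhood of $\gamma$ indeed sits inside $U_{n,\gamma}^x$.
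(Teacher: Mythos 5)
Your proof is correct and follows essentially the same route as the paper: reduce to the case $U_{n,\xi}^x=U_{n,\gamma}^x\cup\{\gamma\}$, note that then $n\leq m(x)$ so a tail of $d_\gamma=\{z_i:i\in F\}$ lies in $U_{n,\gamma}^x$, and conclude that every basic neighborhood $U_{k,\xi}^\gamma$ (which contains $z_i$ for all $i\in F\setminus k$) meets $U_{n,\gamma}^x$. You merely spell out the bookkeeping that the paper leaves implicit.
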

\begin{proof}
It suffices to consider the case $U_{n,\xi}^x=U_{n,\gamma}^x\cup\{\gamma\}$. Thus either 
$n< m(x)$ or $n=m(x)$ and $b_n^x>0$. In both of these cases 
$z_i\in U_{n,\gamma}^x$ for almost all $i\in F$, and hence
$U_{n,\gamma}^x$ intersects all neighborhoods of $\gamma$ in $Y_\xi$.
\end{proof}

Similarly as before, for each $n\in \w$ and $x\in X_\xi$ put $E(x,n,\xi)=(U_{n,\xi}^x, U_{n+1,\xi}^x)$.
Now we shall define a family $\Phi_\xi=\{f_{E(x,n,\xi)}:n\in\w, x\in X_\xi\}$ such that conditions (3), (4) and (6) are satisfied when $\alpha$ is replaced with $\xi$.
Put $f_{E(x,0,\xi)}\equiv 1$ for all $x\in X_{\xi}$. For each $x\in X_{\gamma}$ and $n\in\N$ set 
 $$f_{E(x,n,\xi)}=f_{E(x,n,\gamma)}\cup\{(\gamma,b^x_n)\}.$$ 
Conditions $(\circ_0)$, $(\circ_1)$ together with the choice of $b^x_n$ as the limit of the sequence 
$\{f_{E(x,n,\gamma)}(z_i):i\in F\}$, ensure the continuity of functions
$f_{E(x,n,\xi)}$ for all $n\in\w$ and $x\in X_\gamma$.
Finally, we set  
$$
f_{E(\gamma,n,\xi)}(y)=
\begin{cases}f_{E(z_i,h_{\gamma}(i)+n, \gamma)}(y) &\mbox{if $y\in U^{z_n}_{h_\gamma(i)+n,\gamma}$ and $i\in F\setminus n$};\\
1&\mbox{if $y=\gamma$};\\
0&\mbox{otherwise}.
\end{cases}
$$
Since the family $\{U_{h_{\gamma}(i),\gamma}^{z_i}:i\in\w\}$ is locally finite in $Y_\gamma$, it 
remains to check that $f_{E(\gamma,n,\xi)}$ 
 is continuous at $\gamma$. For this it is enough to prove the first equality of condition (4) for $x=\gamma$ and $\xi=\alpha$. This follows from the following equation:
 \begin{eqnarray*}
f_{E(\gamma,n,\xi)}(U_{n+1,\xi}^\gamma)=\{1\}\cup\bigcup_{i\in F\setminus(n+1)} f_{E(\gamma,n,\xi)}
(U^{z_i}_{h_\gamma(i)+n+1,\gamma})=\\ =\{1\}\cup\bigcup_{i\in F\setminus(n+1)} f_{E(z_i,h_{\gamma}(i)+n,\gamma)}
(U^{z_i}_{h_\gamma(i)+n+1,\gamma})=\{1\}. 
\end{eqnarray*}

The fact that functions from $\Phi_\xi$ satisfy the first equation from (4) also for $x\in X_{\gamma}$
follows from (5) established in Claim~\ref{dense_in_new} and from the continuity of functions in $\Phi_\xi$. 
It is clear that any function from $\Phi_\xi$ satisfies the second equation of condition (4). Finally,
it is a tedious routine to check that the families $\mathcal B_{\xi}$ and $\Phi_{\xi}$ satisfy conditions (1), (2), (3), and (6), 
which concludes case 2.
\medskip

3) Let $X_{\xi}=\bigcup_{\alpha\in \xi}X_{\alpha}$. For each $x\in X_{\xi}$ let $\theta_x=\min\{\alpha:x\in X_{\alpha}\}$. For each $x\in X_{\xi}$ and $n\in\w$ put $$U_{n,\xi}^x=\bigcup_{\theta_x\leq\alpha<\xi}U_{n,\alpha}^x \quad\hbox{and}\quad f_{E(x,n,\xi)}=\bigcup_{\theta_x\leq \alpha<\xi}f_{E(x,n,\alpha)}.$$
It is straightforward to check that the family $\mathcal B_{\xi}=\{U_{n,\xi}^x: n\in\w, x\in X_{\xi}\}$ forms a base of a topology $\tau_{\xi}$ on $X_\xi$, and the families $\Phi_{\xi}=\{f_{E(x,n,\xi)}: n\in\w, x\in X_\xi\}$ and $\mathcal B_{\xi}$ satisfy conditions (1)--(6).
Thus, it remains to establish the continuity of functions in $\Phi_\xi$, which we do next.

Assume to the contrary  that there exists  $f_{E(x,n,\xi)}\in \Phi_{\xi}$ which is not continuous at a point $z\in X_{\xi}$. Then there exists a sequence $\{a_k\}_{k\in\w}$ such that $\lim_{k\in\w}a_k=z$ but 
$$\lim_{k\in\w}f_{E(x,n,\xi)}(a_k)\neq f_{E(x,n,\xi)}(z).$$ For each $k\in\w$ let $\mu_k\in\xi$ be such that $\{a_{k},x,z\}\subset X_{\mu_k}$. 
Since $Y_{\xi}$ is first-countable, $X_0$ is a dense subset of $X_{\xi}$ and the functions $f_{E(x,n,\mu_k)}$, $k\in\w$ are continuous on $Y_{\mu_k}$, for each $k\in\w$ we can pick $b_k\in X_0$ such that 
$$\lim_{k\in\w}b_k=\lim_{k\in\w}a_k=z\qquad \hbox{ and }\qquad |f_{E(x,n,\mu_k)}(b_k)-f_{E(x,n,\mu_k)}(a_k)|\leq 1/k. $$

For each $k\in\w$, by condition (6), $f_{E(x,n,\xi)}{\restriction}_{ X_{\mu_k}}=f_{E(x,n,\mu_k)}$ implying that \begin{equation}\tag{$\bullet$}|f_{E(x,n,\xi)}(b_k)-f_{E(x,n,\xi)}(a_k)|=|f_{E(x,n,\mu_k)}(b_k)-f_{E(x,n,\mu_k)}(a_k)|\leq 1/k.
\end{equation}
Since the function $f_{E(x,n,\mu_0)}$ is continuous on $Y_{\mu_0}$ condition (6) implies the following: 
$$\lim_{k\in\w}f_{E(x,n,\xi)}(b_k)=\lim_{k\in\w}f_{E(x,n,\mu_0)}(b_k)=f_{E(x,n,\mu_0)}(\lim_{k\in\w}b_k)=f_{E(x,n,\mu_0)}(z)=f_{E(x,n,\xi)}(z).$$
But then 
\begin{eqnarray*}0\neq f_{E(x,n,\xi)}(z)-\lim_{k\in\w}f_{E(x,n,\xi)}(a_k)=\lim_{k\in\w}f_{E(x,n,\xi)}(b_k)-\lim_{k\in\w}f_{E(x,n,\xi)}(a_k)=\\
=\lim_{k\in\w}\big(f_{E(x,n,\xi)}(b_k)-f_{E(x,n,\xi)}(a_k)\big)=0,
\end{eqnarray*}
where the latter equality follows from equation $(\bullet)$.
The obtained contradiction ensures that all functions in $\Phi_{\xi}$ are continuous, and thus
concludes case 3.

By the construction, the space $Y_{\mathfrak{c}}$ is Tychonoff, first-countable and contains $X$ as a dense subspace. Let $A$ be any countable subset of $Y_{\mathfrak{c}}$. If the set $B=A\cap \mathfrak{c}$ is infinite, then consider $h(B)\in\mathfrak{c}$. By the construction, either $B$ has an accumulation point in $Y_{h(B)}$ or $h(B)$ is an accumulation point of $B$ in $Y_{h(B)+1}$. In both cases $B$ has an accumulation point in $Y_{\mathfrak c}$. If $A\subset^* X$, then either it has an accumulation point in $X$, or $A$ is closed and discrete in $X$. In the latter case either $A$ has an accumulation point in $Y_{h(A)}$ or $h(A)$ is an accumulation point of $A$ in $Y_{h(A)+1}$. Thus $Y_{\mathfrak c}$ is countably compact,
which completes our proof.
\end{proof}

\subsection*{Proof of Theorem~\ref{ps}}
We need to show that the following assertions are equivalent:
\begin{enumerate}
    \item $\mathfrak b=\mathfrak s=\mathfrak c$.
    \item Every first-countable zero-dimensional Hausdorff space of weight $<\mathfrak c$ embeds densely into a first-countable zero-dimensional pseudocompact space.
    \item Every first-countable zero-dimensional Hausdorff space of cardinality $<\mathfrak c$ embeds densely into a first-countable zero-dimensional pseudocompact space.
\end{enumerate}

\begin{proof}
The implication (1) $\Rightarrow$ (2) follows from Proposition~\ref{pseudocompact}.  Since the weight of an infinite first-countable space doesn't exceed its cardinality, implication (2) $\Rightarrow$ (3) holds. 

(3) $\Rightarrow$ (1). Corollary~\ref{pseudob} implies that $\mathfrak b=\mathfrak c$. Assuming that $\mathfrak s<\mathfrak c$, we get that the normal first-countable space $Y$ constructed in Proposition~\ref{De} has cardinality $\mathfrak s<\mathfrak c$ and, thus, is zero-dimensional. Then, by the assumption, $Y$ embeds densely into a first-countable zero-dimensional pseudocompact space.  But this contradicts Proposition~\ref{De}. Hence  $\mathfrak b=\mathfrak s=\mathfrak c$.    
\end{proof}

\subsection*{Proof of Theorem~\ref{PR}}
We need to show that a subspace $X$ of the Cantor space is a $\lambda$-set if and only if the Pixley-Roy hyperspace $\PR(X)$ embeds densely into a first-countable pseudocompact space.
 
\begin{proof}
The implication ($\Rightarrow$) follows from \cite[Theorem 4.2]{Bell}. 

The implication ($\Leftarrow$) follows from Proposition~\ref{bpseudo}. \end{proof}

\section*{Acknowledgements} We would like to thank prof. Alan Dow for an idea of proving Proposition~\ref{De}.


\begin{thebibliography}{}

\bibitem{A}
U. Abraham, I. Gorelic, I. Juh\'asz, {\em On Jakovlev spaces}, Israel J. Math. {\bf 152} (2006), 205--219.



\bibitem{BO}
S. Bardyla, A. Osipov,  {\em On regular $\kappa$-bounded spaces admitting only constant continuous mappings into $T_1$ spaces of pseudo-character $\leq \kappa$}, Acta Math. Hung. {\bf 163} (2021), 323--333.

\bibitem{BZ}
S. Bardyla, L. Zdomskyy,  {\em On regular separable countably compact $\mathbb R$-rigid spaces}, Israel J. Math {\bf 255} (2023), 783--810.

\bibitem{BSZ}
S. Bardyla, J. Supina, L. Zdomskyy, {\em Open filters and measurable cardinals}, preprint, 	arXiv:2301.08704.

\bibitem{Bell}
M. Bell, {\em First countable pseudocompactifications}, Topology Appl. {\bf 21}, (1985), 159--166.

\bibitem{BS}
J. Brendle, S. Shelah, {\em Ultrafilters on $\w$ -- their ideals and their cardinal characteristics}, Trans. Amer. Math. Soc. {\bf 351}:7 (1999), 2643--2674.

\bibitem{Buk}
L. Bukovsk\'y, {\em The structure of the real line}, Monografie Matematyczne volume {\bf 71}, Birkhäuser Basel, 2011.








\bibitem{int} E.~van Douwen, {\em The integers and topology}, in: K. Kunen, J.E. Vaughan (Eds.), {\em Handbook of set-theoretic topology}, North-Holland, Amsterdam (1984), 111--167.

\bibitem{DP} E.~van Douwen, T. Przymusinski, {\em First countable and countable spaces all compactifications of which contain $\beta(\mathbb{N})$}, Fundamenta Mathematicae {\bf 102} (1979), 229--234.

\bibitem{PFA1} A. Dow, {\em Countable $\pi$-character, countable compactness and PFA}, Topology Appl. {\bf 239} (2018), 25--34.

\bibitem{PFA2} A. Dow, F. D. Tall, {\em PFA(S)[S] and countably compact spaces}, Topology Appl. {\bf 230} (2017), 393--416.

\bibitem{PFA3} A. Dow, F. D. Tall, {\em Hereditarily normal manifolds of dimension greater than one may all be metrizable}, Trans. Amer. Math. Soc. 372 (2019), 6805--6851.

\bibitem{Eng} R.~Engelking, {\em General topology}, Heldermann Verlag, Berlin, 1989.

\bibitem{FR}
S. Franklin, M. Rajagopalan, {\em Some examples in topology}, Trans. Amer. Math. Soc., {\bf 155}:2 (1971), 305-–314.

\bibitem{hrusak}
M. Hru\v{s}\'ak,
{\emph{Almost disjoint families in topology,}}
Recent Progress in General Topology III. Atlantis Press, Paris, 601--638, 2014.

\bibitem{HM}
M. Hru\v{s}\'ak, J. Moore, {\em Twenty problems in set-theoretic topology},
Open Problems in Topology II, Elsevier, 2007, pp. 111--113.



\bibitem{5}
S. Iliadis, V. Tzannes, {\em Spaces on which every continuous map into a given space is constant}, Can. J. Math., {\bf 38}:6 (1986), 1281--1298.

\bibitem{J}
F. B. Jones,
{\em Hereditary separable, non-completely regular spaces},
Topology Conf., Virginia Polytech. Inst. and State Univ., Lecture Notes in Math., no. 375, Springer-Verlag, Berlin and New York, 1974, pp. 149--152.




\bibitem{Kun}
K. Kunen, {\em Set Theory, Studies in Logic: Mathematical Logic and Foundations},
Vol. 34, College Publications, London, 2011, viii + 401 pp.

\bibitem{Moore}
J. T. Moore, {\em The Proper Forcing Axiom}, Proceedings of  
 the International Congress of Mathematicians, Hyderabad, India, 2010.

\bibitem{Ny1}
P. Nyikos, {\em On first-countable, countably compact spaces III.
The problem of obtaining separable noncompact examples}, Open problems in topology, North-Holland, Amsterdam,
1990, pp. 127–-161.

\bibitem{Ny2}
P. Nyikos, {\em first-countable, countably compact, noncompact spaces},
Open Problems in Topology II, Elsevier, 2007, pp. 217–-224.


\bibitem{NV}
P. Nyikos, J. Vaughan, {\em On first-countable, countably compact spaces I: $(\omega_1,\omega_1^*)$-gaps},
Trans. Amer. Math. Soc., {\bf 279}:2 (1983), 463--469.

\bibitem{NZ}
P. Nyikos, L. Zdomskyy, {\em Locally compact, $\omega_1$-compact spaces}, Annals Pure Appl. Logic {\bf 175}:1 (2024), 103324

\bibitem{Ost}
A. Ostaszewski,
{\em On countably compact, perfectly normal spaces},
J. Lond. Math. Soc., {\bf s2-14} no. 3, (1976) 505–-516.

\bibitem{ST}
P. Simon, G. Tironi, {\em First countable extensions of regular spaces},
Proc. Amer. Math. Soc. {\bf 132}:9 (2004), 2783--2792.

\bibitem{Ste}
R. M. Stephenson, {\em Moore-closed and first countable feebly compact extension spaces}, Topology Appl. {\bf 27}, (1987), 11--28.

\bibitem{TT}
T. Terada, J. Terasawa, {\em Maximal extensions of first-countable spaces}, Proc. Amer. Math. Soc. {\bf 85}:1 (1982), 95--99.

\bibitem{Prob}
E. Pearl, {\em Problems from Topology Proceedings}, Topology Atlas, Toronto, 2003

\bibitem{PFA}
F. D. Tall, {\em PFA(S)[S]: More Mutually Consistent
Topological Consequences of
PFA and $V=L$}, Canad. J. Math. {\bf 64}:5 (2012), 1182--1200.

\bibitem{6}
V. Tzannes, {\em A Moore strongly rigid space}, Can. Math. Bull., {\bf 34}:4 (1991), 547--552.

\bibitem{7}
V. Tzannes, {\em Two Moore spaces on which every continuous real-valued function is constant}, Tsukuba J. Math., {\bf 16}:1 (1992), 203--210.

\bibitem{Tz}
V. Tzannes, {\em A Hausdorff countably compact space on which every continuous real-valued function is constant}, Topology Proceedings, {\bf 21} (1996), 239–-244.

\bibitem{Viale}
M. Viale, C. Weiss, {\em On the consistency strength of the proper forcing axiom}, Advances in Mathematics, {\bf 228}:5 (2011), 2672--2687.

\bibitem{Weiss}
W. Weiss, {\em  Countably Compact Spaces and Martin's Axiom}, Can. J.  Math. {\bf 30}:2 (1978), 243--249.
\end{thebibliography}
\end{document}